\newtheorem{theorem}{Theorem}[section]
\newtheorem{corollary}[theorem]{Corollary}
\newtheorem{lemma}[theorem]{Lemma}
\newtheorem{proposition}[theorem]{Proposition}
\theoremstyle{definition}
\newtheorem{remark}[theorem]{Remark}
\newcommand{\R}{\mathbb{R}}
\newcommand{\N}{\mathbb{N}}
\newcommand{\C}{\mathbb{C}}
\newcommand{\T}{\mathbb{T}}
\newcommand{\Z}{\mathbb{Z}}
\def\Xint#1{\mathchoice
{\XXint\displaystyle\textstyle{#1}}%
{\XXint\textstyle\scriptstyle{#1}}%
{\XXint\scriptstyle\scriptscriptstyle{#1}}%
{\XXint\scriptscriptstyle\scriptscriptstyle{#1}}%
\!\int}
\def\XXint#1#2#3{{\setbox0=\hbox{$#1{#2#3}{\int}$ }
\vcenter{\hbox{$#2#3$ }}\kern-.6\wd0}}
\def\dashint{\Xint-}
\begin{document}

\title[Refinements of Strichartz estimates]{Refinements of Strichartz estimates on tori and applications}
\author{Robert Schippa}
\address{Korea Institute of Advanced Study, Hoegi-ro 85, Dongdaemun-gu, 02455 Seoul, Republic of Korea}
\email{rschippa@kias.re.kr}
\begin{abstract}
We show trilinear Strichartz estimates in one and two dimensions on frequency-dependent time intervals. These improve on the corresponding linear estimates of periodic solutions to the Schr\"odinger equation. The proof combines decoupling iterations with bilinear short-time Strichartz estimates. 
 Secondly, we use decoupling to show new linear Strichartz estimates on frequency dependent time intervals. We apply these in case of the Airy propagator to obtain the sharp Sobolev regularity for the existence of solutions to the modified Korteweg-de Vries equation.
\end{abstract}

\maketitle

\section{Introduction}

Discrete Fourier restriction deals with estimates of the kind
\begin{equation}
\label{eq:DiscreteFourierRestriction}
\| \sum_{\substack{ k \in \Z^d, \\ |k| \leq N}} a_k e^{i(x \cdot k - t k^2)} \|_{L_{t,x}^p([0,1] \times \T^d)} \lesssim K_p(N) \| a_k \|_{\ell_k^2}.
\end{equation}

After pioneering work by Zygmund \cite{Zygmund1974}, Bourgain \cite{Bourgain1993A} studied discrete Fourier restriction to obtain Strichartz estimates for the periodic Schr\"odinger propagator:
\begin{equation}
\label{eq:DiscreteStrichartz}
\| e^{it \Delta} P_N f \|_{L^p_{t,x}([0,1] \times \T^d)} \lesssim K_p(N) \| f \|_{L^2},
\end{equation}
where $P_N$ denotes frequency projection to frequencies of size $N$. A wide range of periodic Strichartz estimates with constant $K_p(N)$ sharp up to endpoints were proved in \cite{Bourgain1993A,Bourgain1993B}. More recently, Bourgain--Demeter \cite{BourgainDemeter2015} showed sharp $\ell^2$-decoupling inequalities. The $\ell^2$-decoupling inequalities imply the Strichartz estimates on $\T^d$ by continuous approximation; see below. 

 Let $\mathcal{E}$ denote the Fourier extension operator of the truncated paraboloid:
\begin{equation*}
\mathcal{E} f(x,t) = \int_{\{ \xi \in \R^d, \; |\xi| \leq 1 \}} e^{i(x \cdot \xi - t|\xi|^2)} f(\xi) d\xi, \quad (x,t) \in \R^d \times \R.
\end{equation*}
The following estimates were proved in \cite{BourgainDemeter2015}:
\begin{equation}
\label{eq:l2DecouplingIntroduction}
\| \mathcal{E} f \|_{L^p_{t,x}(B_{d+1}(0,R))} \lesssim_\varepsilon R^\varepsilon \big( \sum_{\theta: R^{-\frac{1}{2}}\text{-ball}} \| \mathcal{E} f_{\theta} \|^2_{L^p_{t,x}(w_{B_{d+1}(0,R)})} \big)^{\frac{1}{2}}
\end{equation}
with $R \gg 1$, $2 \leq p \leq \frac{2(d+2)}{d}$. $w_{B(0,R)}$ denotes a weight decaying off $B_{d+1}(0,R)$ polynomially (say with degree $100d$). $\{ \theta \}$ denotes an essentially disjoint cover of $B(0,1)$ with $R^{-\frac{1}{2}}$-balls, and we denote
\begin{equation*}
\mathcal{E} f_{\theta} = \int_{\theta} e^{i(x \cdot \xi - t|\xi|^2)} f(\xi) d\xi.
\end{equation*}


\medskip

 To solve nonlinear equations, multilinear Strichartz estimates which improve on the corresponding linear estimate in certain cases turned out to be crucial.

Consider the multilinear estimates
\begin{equation}
\label{eq:Multilinear}
\| \prod_{i=1}^n P_{N_i} e^{it \Delta} f_i \|_{L^2_{t,x}([0,T] \times \T^d)} \lesssim C(T,N_1,\ldots,N_n) \prod_{i=1}^n \| f_i \|_{L^2(\T^d)}
\end{equation}
for frequencies $N_1 \gtrsim \ldots \gtrsim N_n$. It became clear in Bourgain's work \cite{Bourgain2004} that further refinements of \eqref{eq:Multilinear} for frequency-dependent times $T=T(N_1,\ldots,N_n)$ can be used in the analysis of solutions to mass-critical Schr\"odinger equations with infinite energy. Precisely, in \cite{Bourgain2004} the following qualitative trilinear Strichartz estimate was proved:

\begin{theorem}[{\cite[Section~4]{Bourgain2004}}]
\label{thm:BourgainImprovedStrichartz}
Fix large numbers $N_j \in 2^{\N}$ ($1 \leq j \leq 3$) and $D$, which satisfy $N_1 \geq N_2 \geq N_3$ and for some $1 \geq \beta > 0$ 
\begin{equation}
\label{eq:SizeConstraint1}
N_1 > N_3^{1+ \beta},
\end{equation}
and
\begin{equation}
\label{eq:SizeConstraint2}
D > N_1^\beta.
\end{equation}
Assume
\begin{equation}
\label{eq:FourierSupportConstraint}
\text{supp} (\hat{f}_j) \subseteq [N_j,2N_j] \qquad (1 \leq j \leq 3).
\end{equation}
Then
\begin{equation}
\label{eq:ImprovedTrilinearStrichartzBourgain}
\int_{[0,1/D] \times \T} \prod_{j=1}^3 |e^{it \Delta} f_j |^2 dx dt \leq N_1^{-\delta'} \prod_{j=1}^3 \| f_j \|_2^2,
\end{equation}
where $\delta' = \delta'(\beta) > 0$.
\end{theorem}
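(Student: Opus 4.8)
The plan is to deduce \eqref{eq:ImprovedTrilinearStrichartzBourgain} from the $\ell^2$-decoupling inequality \eqref{eq:l2DecouplingIntroduction} — used both as an $L^6$-Strichartz estimate on $\T$ and as the engine of an iteration — together with a bilinear Strichartz estimate adapted to the short time scale $1/D$, the whole argument being driven by the frequency gap $N_1 \ge N_3^{1+\beta}$. Throughout write $u_j = e^{it\Delta}f_j$, so that the left side of \eqref{eq:ImprovedTrilinearStrichartzBourgain} is $\|u_1 u_2 u_3\|_{L^2([0,1/D]\times\T)}^2$.

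The first ingredient I would establish is a \emph{bilinear short-time Strichartz estimate} for the extreme pair: since $\mathrm{supp}\,\hat f_1$ and $\mathrm{supp}\,\hat f_3$ are separated by $\gtrsim N_1$, subdivide $[0,1/D]$ into $O(1+N_1/D)$ intervals of length $\min(1/N_1,1/D)$; on each such interval $\T$ behaves like $\R$ at the relevant spatial scale, so after transference (Poisson summation, writing the periodic solution as a sum of translates of a Euclidean one) the Euclidean estimate $\|e^{it\partial_x^2}\phi_1 \cdot e^{it\partial_x^2}\phi_3\|_{L^2(\R^2)}\lesssim N_1^{-1/2}\|\phi_1\|_2\|\phi_3\|_2$ applies, and summing in $\ell^2$ over the intervals yields
\[
\|u_1 u_3\|_{L^2([0,1/D]\times\T)}\lesssim\Big(\tfrac1{N_1}+\tfrac1D\Big)^{1/2}\|f_1\|_2\|f_3\|_2\lesssim N_1^{-\beta/2}\|f_1\|_2\|f_3\|_2,
\]
the last step using $D>N_1^\beta$, so that $\min(N_1,D)\ge N_1^\beta$. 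Interpolating this $L^2$-gain against the loss-free bound $\|u_1 u_3\|_{L^3}\le\|u_1\|_{L^6}\|u_3\|_{L^6}\lesssim_\varepsilon N_1^\varepsilon\|f_1\|_2\|f_3\|_2$ coming from \eqref{eq:l2DecouplingIntroduction} gives bilinear $L^q$-estimates with a (smaller) gain for $2\le q<3$, and the analogous statements hold for the pair $(f_2,f_3)$ and, whenever the separation of $\mathrm{supp}\,\hat f_1$ and $\mathrm{supp}\,\hat f_2$ is also $\gtrsim N_1$, for $(f_1,f_2)$. One then wants to assemble these into the trilinear bound, e.g.\ via the identity $|u_1u_2u_3|^2=|u_1u_3|\,|u_2u_3|\,|u_1u_2|$ followed by H\"older, or via $\int|u_1u_2u_3|^2\le\|u_1u_3\|_{L^2}^2\|u_2\|_{L^\infty}^2$ after a frequency decomposition of $f_1$ (and of $f_2$) at an intermediate scale, using $L^2$-almost-orthogonality of the blocks after a Galilean normalization.

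The main obstacle is that any such one-step combination produces only a gain of the shape $N_1^{-c(1-\beta)\eta}$ for a small parameter $\eta$ governed by the H\"older exponents — hence no gain once $\beta<1$ — because the short time $1/D$ helps the bilinear factor exactly as much as it is neutral for the linear $L^6$-control, and because in the regime $N_2\sim N_1$ the pair $(f_1,f_2)$ carries no usable bilinear gain; dealing with this intermediate factor is the technical heart of the argument. The way through is to arrange the estimate so that it is \emph{self-improving}: one iterates the decoupling-plus-bilinear step over a sequence of intermediate frequency scales, at each stage parabolically rescaling a frequency block to unit size, applying the (so-far established) trilinear bound at the smaller scale, and recombining via \eqref{eq:l2DecouplingIntroduction}, so that the bilinear gains at successive scales compound while the decoupling losses telescope to $N_1^{\varepsilon}$; after finitely many passes a genuine power gain $N_1^{-\delta'}$ survives. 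This bookkeeping has to be organized according to the relative sizes of $N_2$, $N_3$ and $D$ against $N_1$ (in particular $N_2\lesssim N_1^{1-c}$ versus $N_2\sim N_1$, and $D\lesssim N_1$ versus $D\gtrsim N_1$) so that the iteration closes in every regime, and optimizing over the free parameters is what produces the explicit exponent $\delta'=\delta'(\beta)>0$ with $\delta'(\beta)\to0$ as $\beta\to0$.
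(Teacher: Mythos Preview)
Your proposal identifies the correct ingredients --- the short-time bilinear Strichartz estimate, the linear $L^6$-Strichartz estimate from decoupling, and the necessity of an iteration to propagate the gain --- and your diagnosis that a one-step H\"older combination fails is right. However, the paper takes a structurally different route that sidesteps the case analysis you anticipate.

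The paper's first move is to \emph{remove} the intermediate function $f_2$ entirely via H\"older's inequality:
\[
\int |u_1u_2u_3|^2 \le \Big(\int |u_1|^2|u_3|^4\Big)^{1/2}\|u_2\|_{L^6}^2\|u_1\|_{L^6},
\]
absorbing the $L^6$ factors at only logarithmic cost via Theorem~\ref{thm:DiscreteStrichartzL6}. This reduces the problem to the \emph{asymmetric bilinear} expression $\int |u_1|^2|u_3|^4$, with $f_1$ further localized (by almost orthogonality) to a frequency interval of length $N_3$. The troublesome regime $N_2\sim N_1$ that you flag simply disappears, and no bookkeeping over the relative sizes of $N_2$ and $D$ is required.

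The iteration is then not a trilinear induction on scales with parabolic rescaling as you describe, but the two-dimensional \emph{efficient congruencing} scheme of Li (cf.\ \cite{Li2020,GuoLiYungZorinKranich2021}): writing the expression after continuous approximation and rescaling as $\int |g_{I_1}|^4|g_{I_2}|^2$ with $|I_j|=\tilde\delta\lesssim N_1^{-\beta/(1+\beta)}$ and $\mathrm{dist}(I_1,I_2)\sim 1$, one uses transversality-based $L^2$-orthogonality to decouple $g_{I_2}$ to subintervals of length $\tilde\delta^2$, then a Cauchy--Schwarz step that swaps the exponents $(4,2)\to(2,4)$ at the cost of one factor $\|g_{I_1}\|_{L^6}^3$. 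Each pass \emph{squares} the interval scale, so after $m\sim\log_2(\beta^{-1}+1)$ iterations one reaches the finest admissible scale and exits via bilinear Strichartz plus Bernstein, picking up $N_1^{-\alpha/2}$. Unwinding the iteration gives the explicit gain $N_1^{-\alpha\beta/8}$ (up to logarithms).

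Your sketch is closer in spirit to Bourgain's original qualitative argument in \cite{Bourgain2004}; it is plausible but the ``self-improving'' trilinear iteration you gesture at is exactly the part that is delicate to close, and you have not said what quantity is being inducted on or why the losses telescope. What the paper's reduction buys is that the iteration lives purely in the two-function asymmetric setting, where the efficient-congruencing mechanism is clean, the number of steps is $O(\log\beta^{-1})$, and the exponent $\delta'(\beta)$ is explicit.
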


The argument in \cite{Bourgain2004} is qualitative, but relies on induction on scales and orthogonality considerations similar to decoupling. Also, there is a key step which can be perceived as partial decoupling. Possibly the proof can be regarded as a qualitative precursor to efficient congruencing in two dimensions (cf. \cite{Wooley2016,Li2020,Li2021,GuoLiYungZorinKranich2021}). In the following we quantify the derivative gain in the trilinear estimate, which was left as an open question in \cite{Bourgain2004}:

\begin{theorem}
\label{thm:ImprovedStrichartzQuantified}
Let $0<\alpha \leq 1$. Under the assumptions \eqref{eq:SizeConstraint1} and \eqref{eq:FourierSupportConstraint} of Theorem \ref{thm:BourgainImprovedStrichartz}, we have
\begin{equation}
\label{eq:ImprovedStrichartzQuantified}
\int_{[0,N_1^{-\alpha}] \times \T } \prod_{j=1}^3 |e^{it \Delta} f_j |^2 dx dt \leq C \prod_{j=1}^3 \| f_j \|_2^2,
\end{equation}
where $C = C_{\varepsilon,\beta} \log(N_1)^{12+\varepsilon} N_1^{-\frac{\alpha \beta}{8}}$ for $\varepsilon > 0$.
\end{theorem}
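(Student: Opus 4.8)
The plan is to combine an $\ell^2$-decoupling iteration at the scale dictated by the time interval with a bilinear short-time Strichartz estimate, as announced in the abstract. First I would rescale: on the time interval $[0, N_1^{-\alpha}]$, after writing $t = N_1^{-\alpha} s$, the phase $-t\xi^2$ becomes $-N_1^{-\alpha} s \xi^2$, so the relevant family of frequency caps for the highest frequency $f_1$ has width $\sim N_1^{-1+\alpha/2}$ rather than $N_1^{-1/2}$ — i.e. the short time coarsens the scale at which decoupling operates. Equivalently, partition $[N_1, 2N_1]$ into $\sim N_1^{\alpha/2}$ intervals $I$ of length $N_1^{1-\alpha/2}$ and observe that on $[0, N_1^{-\alpha}]$ each piece $e^{it\Delta} f_{1,I}$ behaves like a solution modulated by a linear phase, so that the $L^4_{t,x}$ norm of $\sum_I e^{it\Delta} f_{1,I}$ decouples into $(\sum_I \|e^{it\Delta} f_{1,I}\|_{L^4}^2)^{1/2}$ up to $N_1^\varepsilon$ by \eqref{eq:l2DecouplingIntroduction} (applied with $d=1$, $p=4=2(d+2)/d$, and $R \sim N_1^\alpha$).

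Next I would insert a bilinear estimate to exploit the transversality between the separated frequencies $N_1 \gg N_3^{1+\beta}$. Write the integrand as $|e^{it\Delta}f_1|^2 |e^{it\Delta}f_2|^2 |e^{it\Delta}f_3|^2$ and bound it by $\|e^{it\Delta}f_2 \cdot e^{it\Delta}f_3\|_{L^2_{t,x}}^2 \cdot \|e^{it\Delta}f_1\|_{L^\infty}^2$ or, better, group as a genuine bilinear interaction of the top frequency with the bottom: by Cauchy–Schwarz in the spatial variable and Bernstein, reduce to controlling $\|e^{it\Delta}f_1 \cdot e^{it\Delta}f_3\|_{L^2_{t,x}([0,N_1^{-\alpha}]\times\T)}$. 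The bilinear short-time Strichartz estimate should give a gain: for frequencies $N_1, N_3$ with $N_1 \gg N_3$, on a time interval of length $N_1^{-\alpha}$ one expects $\|e^{it\Delta}f_1 \cdot e^{it\Delta}f_3\|_{L^2} \lesssim (N_3/N_1^{?})^{1/2}\,\|f_1\|_2\|f_3\|_2$ with a power saving coming from the transversality of the two pieces of paraboloid. The exponent $\beta/8$ in the statement strongly suggests one loses a factor of order $N_1^{\alpha/2}$ from decoupling into caps but regains $N_1^{\alpha\beta/2}\cdot(\text{something})$ from the separation condition $N_1 > N_3^{1+\beta}$, with the $8$ absorbing the various Cauchy–Schwarz and interpolation losses; I would track these constants carefully rather than optimize.

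The iteration step is the heart of the matter: a single application of decoupling plus the bilinear estimate will not by itself produce a power of $N_1$, because the bilinear gain at the coarsened scale is only logarithmic or $N_1^\varepsilon$ per step. So I would set up an induction on scales — at each stage passing from caps of size $\rho$ to caps of size $\rho^{1/2}$ (or from $N_1^{1-\alpha/2}$ down through dyadic refinements toward $N_1^{1/2}$), at each stage trading a $\rho^\varepsilon$ decoupling loss for a quantitative bilinear gain, and summing the geometric series of gains over the $\sim \log N_1$ scales. The number of scales is $O(\log N_1)$, which is the source of the $\log(N_1)^{12+\varepsilon}$ factor (roughly: a few powers of $\log$ per linear factor, cubed, plus $\varepsilon$ room), and the accumulated bilinear gain at the terminal scale is $N_1^{-\alpha\beta/8}$.

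**Main obstacle.** I expect the principal difficulty to be the bookkeeping in the induction: one must phase-decompose each of $f_1, f_2, f_3$ simultaneously and keep track of which pairs of caps are transverse at each scale, ensuring that the bilinear gain genuinely accrues at every step and does not degenerate when the caps of $f_1$ and $f_3$ become comparable near scale $N_3$. Concretely, the separation $N_1 > N_3^{1+\beta}$ must be propagated through the rescaling so that, even at the finest scale reached in the iteration, the two frequency blocks remain separated by a definite amount — this is what forces the gain to depend on $\beta$, and getting the dependence to be $\geq \alpha\beta/8$ uniformly in $\alpha$ (rather than degrading as $\alpha \to 0$) will require care in how the interval length $N_1^{-\alpha}$ interacts with the number of iteration steps. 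A secondary technical point is the passage from the continuous extension operator $\mathcal{E}$ in \eqref{eq:l2DecouplingIntroduction} to the periodic propagator $e^{it\Delta}P_N$, which is standard (periodization / Poisson summation on the rescaled torus) but must be done at each scale of the iteration, absorbing the weights $w_{B(0,R)}$ and the $R^\varepsilon$ losses consistently.
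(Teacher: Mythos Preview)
Your proposal has the right two ingredients (a decoupling iteration and the short-time bilinear Strichartz estimate) but the iteration you describe is not the one that works, and several of your intermediate claims about where the constants come from are incorrect.

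First, the initial reduction. You suggest grouping as $\|e^{it\Delta}f_1\cdot e^{it\Delta}f_3\|_{L^2}$ after Cauchy--Schwarz and Bernstein. The paper instead applies H\"older to reach the \emph{asymmetric} expression
\[
\int_{[0,N_1^{-\alpha}]\times\T}|e^{it\Delta}f_1|^2\,|e^{it\Delta}f_3|^4\,dx\,dt,
\]
peeling off two linear $L^6$ factors (estimated by the Guo--Li--Yung discrete restriction bound $\log(N_1)^{2+\varepsilon}$). This asymmetric $(2,4)$ structure is essential: it is precisely the shape needed for the efficient-congruencing / Li bilinear decoupling, and it is where half of the $\log(N_1)^{12+\varepsilon}$ factor originates.

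Second, and more seriously, your iteration is the wrong one. You propose linear $\ell^2$-decoupling of $f_1$ into caps of width $N_1^{1-\alpha/2}$, then refining $\rho\to\rho^{1/2}$ over $\sim\log N_1$ many scales, accruing a bilinear gain at each step. The paper does not do this. After rescaling and continuous approximation, one has $\int|g_{I_1}|^4|g_{I_2}|^2$ with $|I_i|=\tilde\delta=N_1^{-\beta/(1+\beta)}$ (the ratio $N_3/N_1$, not a scale tied to $N_1^{\alpha}$). The iteration step uses transversality to decouple $I_2$ into subintervals of length $\tilde\delta^{\,2}$, then H\"older to swap the exponents $4$ and $2$, producing one linear $L^6$ factor and the same asymmetric expression at the finer scale. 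Each step \emph{squares} the interval length, so the number of iterations is
\[
m=\lceil\log_2(\beta^{-1}+1)\rceil=O_\beta(1),
\]
a constant depending only on $\beta$, not $\log N_1$. The bilinear Strichartz estimate is applied exactly \emph{once}, at the terminal scale $N_1^{-1+\alpha/2}$, yielding a gain of $N_1^{-\alpha/2}$; unwinding the $m$-fold nested square roots degrades this to $(N_1^{-\alpha/2})^{2^{-m}}\le N_1^{-\alpha\beta/8}$. The $\log(N_1)^{12+\varepsilon}$ does not come from the number of scales; it comes from the $\log(N_1)^{2+\varepsilon}$ cost of each linear $L^6$ exit (summing to exponent at most $6$ over the iteration by geometric series), combined with the $\log(N_1)^{6+\varepsilon}$ from the initial H\"older reduction.

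In short: your scheme of linearly decoupling $f_1$ and gaining bilinearly at every scale does not close --- there is no mechanism in it that forces a power saving rather than an $N_1^{\varepsilon}$ loss. The missing idea is the asymmetric efficient-congruencing iteration on the scale $N_3/N_1$, which is what converts the separation hypothesis $N_1>N_3^{1+\beta}$ into an $O_\beta(1)$ bound on the number of steps and hence a genuine power of $N_1$.
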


For $\alpha = 1$, we can take advantage of short-time bilinear Strichartz estimates, for which we refer to works by Moyua--Vega \cite{MoyuaVega2008} in one dimension and Hani \cite{Hani2012} in higher dimensions. For $\alpha < 1$ we shall lower the effective size of the Fourier support by orthogonality considerations related to decoupling. This makes an application of Bernstein's inequality more favorable.

\medskip

The key new ingredient in the proof of Theorem \ref{thm:ImprovedStrichartzQuantified} is a decoupling iteration based on efficient congruencing, which Wooley employed for the first time in \cite{Wooley2016} to prove Vinogradov's Mean-Value Theorem in the cubic case. This was translated to prove decoupling for the paraboloid with an improved decoupling constant by Li \cite{Li2020,Li2021}.
  Since this decoupling iteration allows us to quantify the decoupling constant fairly precisely, we can show the further logarithmic refinement:
\begin{theorem}
\label{thm:LogarithmicRefinementTrilinearStrichartz1d}
Let $N_i \in 2^{\N}$ be dyadic numbers with $N_3 \leq N_1 \exp \big( \frac{- \log N_1}{\log \log N_1} \big)$ and $\alpha > 0$. Suppose that $\text{supp}(\hat{f}_i) \subseteq [N_i/2,N_i]$. Then the following estimate holds:
\begin{equation}
\label{eq:LogarithmicRefinement}
\int_{[0,N_1^{-\alpha}] \times \T } |e^{it \Delta} f_1|^2 |e^{it \Delta} f_2 |^2 | e^{it \Delta} f_3 |^2 dx dt \lesssim_\alpha \prod_{i=1}^3 \| f_i \|_2^2.
\end{equation}
\end{theorem}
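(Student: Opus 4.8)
The plan is to derive Theorem~\ref{thm:LogarithmicRefinementTrilinearStrichartz1d} from Theorem~\ref{thm:ImprovedStrichartzQuantified} by choosing the gap parameter $\beta$ as a slowly decaying function of $N_1$ and then checking that the power gain $N_1^{-\alpha\beta/8}$ still dominates the constant $C_{\varepsilon,\beta}(\log N_1)^{12+\varepsilon}$. First, if $N_1$ is bounded by an absolute constant then \eqref{eq:LogarithmicRefinement} follows from Plancherel together with Bernstein's inequality, so we may assume $N_1$ is large; in particular $\log\log N_1 \geq 2$ and all logarithmic expressions below are meaningful. After relabeling the dyadic blocks by harmless factors of $2$, the Fourier support hypothesis of Theorem~\ref{thm:LogarithmicRefinementTrilinearStrichartz1d} becomes \eqref{eq:FourierSupportConstraint}.

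Next I would fix the gap parameter. The hypothesis $N_3 \leq N_1 \exp(-\log N_1/\log\log N_1)$ rewrites as $\log(N_1/N_3) \geq \log N_1/\log\log N_1$, and since $\log N_3 \leq \log N_1$ it follows that
\[
  \log N_1 \;\geq\; \Big(1 + \frac{1}{\log\log N_1}\Big)\log N_3 ,
  \qquad\text{i.e.}\qquad
  N_1 \;\geq\; N_3^{\,1 + 1/\log\log N_1}.
\]
Hence $\beta := (2\log\log N_1)^{-1}$ satisfies $0 < \beta \leq 1$ and $N_1 \geq N_3^{1+2\beta}$, which leaves room to absorb the factor-of-$2$ relabeling and deduce condition \eqref{eq:SizeConstraint1}. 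Applying Theorem~\ref{thm:ImprovedStrichartzQuantified} with this $\beta$ and $\varepsilon = 1$ bounds the left-hand side of \eqref{eq:LogarithmicRefinement} by $C_{\beta}(\log N_1)^{13} N_1^{-\alpha\beta/8}\prod_{i=1}^3\|f_i\|_2^2$, where
\[
  N_1^{-\alpha\beta/8} \;=\; \exp\!\Big( -\frac{\alpha}{16}\cdot\frac{\log N_1}{\log\log N_1}\Big).
\]

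The decisive point is that for this choice of $\beta$ the prefactor $C_{\beta}(\log N_1)^{13}$ is dominated by $\exp\!\big(\tfrac{\alpha}{16}\tfrac{\log N_1}{\log\log N_1}\big)$. The factor $(\log N_1)^{13} = \exp(13\log\log N_1)$ is plainly negligible against this, so everything reduces to the growth of $C_{\varepsilon,\beta}$ as $\beta \to 0$. Here one must return to the decoupling iteration underlying Theorem~\ref{thm:ImprovedStrichartzQuantified}: since the efficient-congruencing-type scheme loses only a controlled factor at each of its $O(1/\beta)$ steps, the resulting constant obeys a bound of the form $C_{\varepsilon,\beta} \lesssim_\varepsilon \exp\!\big((1/\beta)^{O(1)}\big)$, and any such polynomial-exponential bound in $1/\beta$ will do. With $\beta = (2\log\log N_1)^{-1}$ this is at most $\exp\!\big(O((\log\log N_1)^{O(1)})\big)$, which grows slower than every fixed power of $\log N_1$ and is therefore negligible against $\exp\!\big(\tfrac{\alpha}{16}\tfrac{\log N_1}{\log\log N_1}\big)$ once $N_1$ is large. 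Hence the full prefactor is bounded, uniformly in $N_1$, by a constant depending only on $\alpha$, which is precisely \eqref{eq:LogarithmicRefinement}.

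The main obstacle is exactly the step just indicated: one has to read off from the proof of Theorem~\ref{thm:ImprovedStrichartzQuantified} an explicit bound for $C_{\varepsilon,\beta}$ that is at worst polynomial-exponential in $1/\beta$, uniformly as $\beta \to 0$. Concretely, one checks that the number of iteration steps and the loss incurred at each step are both controlled by a fixed power of $1/\beta$ (up to admissible logarithmic factors), so that no intermediate constant degenerates faster; a soft argument yielding only $C_{\varepsilon,\beta} \to \infty$ with no rate would not suffice to close the estimate. This is where the quantitative form of the Li--Wooley decoupling iteration is genuinely used.
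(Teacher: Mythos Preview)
Your approach is correct and essentially the same as the paper's; both set $\beta \sim (\log\log N_1)^{-1}$ and observe that the resulting gain $N_1^{-c\alpha/\log\log N_1}$ beats any power of $\log N_1$. The only difference is packaging: rather than black-boxing Theorem~\ref{thm:ImprovedStrichartzQuantified} and then returning to its proof to bound $C_{\varepsilon,\beta}$, the paper simply re-runs the decoupling iteration from Section~\ref{section:ProofTheoremB} with this specific $\beta$. This makes the ``main obstacle'' you flag transparent: the number of iterations is $m \leq \log_2(10\log\log N_1)$, and the constants accrued at each step enter with geometrically decreasing exponents summing to at most $2$ (the $C_1^2$ in \eqref{eq:DecouplingIteration}), so the total iteration loss is an absolute constant times $\log(N_1)^{O(1)}$, independent of $\beta$. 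The paper then records the bound $\log(N_1)^{20} N_1^{-\alpha/(40\log\log N_1)} \to 0$, which is exactly your estimate with the constant made explicit. Your claim that $C_{\varepsilon,\beta} \lesssim \exp((1/\beta)^{O(1)})$ is therefore far too pessimistic---the actual dependence is bounded---but since even your overestimate closes, this does not affect correctness.
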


\medskip

We remark that the currently best linear decoupling constant for the paraboloid is given by $\log(R)^c$ for $R \gg 1$:
\begin{equation*}
\| \mathcal{E} f \|_{L^6_{t,x}(\R^2)} \lesssim \log(R)^c \big( \sum_{\theta: R^{-\frac{1}{2}}-\text{ball}} \| \mathcal{E} f_\theta \|_{L^6_{t,x}(\R^2)}^2 \big)^{\frac{1}{2}}.
\end{equation*}
This was proved by Guth--Maldague--Wang \cite{GuthMaldagueWang2020} via the `High-Low method'; see also \cite{GuoLiYung2021} for an improvement in the context of discrete Fourier restriction. This approach might lead to further improvements of Theorem \ref{thm:ImprovedStrichartzQuantified}.

\medskip

Next we show a corresponding result in two dimensions using the original decoupling iteration due to Bourgain--Demeter \cite{BourgainDemeter2015} following the presentation by Demeter in \cite[Chapter~10]{Demeter2020}. The set-up is as follows:

Let $N_i \in 2^{\N}$, $i=1,2,3$, $N_3 \ll N_1$ with $N_3^{1+\beta} = N_1$ for some $0<\beta \leq 1$. Let $f_i: \T_\gamma^2 \to \C$ be periodic functions on $\T_\gamma^2 = \R / (2 \pi \Z) \times \R / (2 \pi \gamma \Z)$ with $\gamma \in (\frac{1}{2},1]$. We emphasize that the decoupling arguments do not distinguish between the square and irrational tori, which means we can allow for general $\T^2_\gamma$. 

\medskip

 We suppose that for $i=1,2,3$
\begin{equation} 
\label{eq:FourierSupport}
 \text{supp}(\hat{f}_i) \subseteq B(\xi_i^*,N_3) \text{ for some } \xi_i^* \in B(0,2N_1)
\end{equation}
and
\begin{equation}
\label{eq:SeparationCondition}
|\xi_1^* - \xi_2^*| \gtrsim N_1.
\end{equation}
Consider the expression
\begin{equation}
\label{eq:MultilinearExpression2d}
\int_{[0,N_1^{-\alpha}] \times \T_\gamma^2} |e^{it \Delta} f_1 \, e^{it \Delta} f_2 \, e^{it \Delta} f_3|^{4/3} dx dt.
\end{equation}
This can be regarded as a trilinear version of the Strichartz estimates in two dimensions on frequency-dependent times at the critical exponent $p=4$.

We impose the following transversality condition: Let $\xi_i \in \text{supp}(\hat{f}_i)$. Then we require that there is some $\nu \in (0,1]$ such that
 \begin{equation}
 \label{eq:Transversality3d}
 \xi_1, \; \xi_2, \; \xi_3 \text{ form a triangle of area greater than } N_1^2 \nu.
 \end{equation}
The reason for imposing this condition becomes evident after rescaling: We have for $\xi'_i \in \text{supp}(\hat{f}_i) /N_1$ that
\begin{equation*}
\xi'_1, \; \xi'_2, \; \xi_3' \text{ form a triangle of area greater than } \nu,
\end{equation*}
which is equivalent to the condition that the normals
\begin{equation*}
\mathfrak{n}_i = \frac{(-2 \xi'_i,1)}{|(-2 \xi'_i,1)|}
\end{equation*}
of the paraboloid $(\xi,-|\xi|^2)$ with $N \xi'_i \in \text{supp} (\hat{f}_i)$ are quantitatively transverse:
\begin{equation*}
|\mathfrak{n}_1 \wedge \mathfrak{n}_2 \wedge \mathfrak{n}_3| \gtrsim \nu.
\end{equation*}

\begin{theorem}
\label{thm:TrilinearStrichartzEstimates2d}
Let $\alpha \in (0,1]$. Under the above Fourier support and transversality assumptions on $f_i$, $i=1,2,3$ with $\| f_i \|_2 = 1$, the following estimate holds:
\begin{equation}
\label{eq:TrilinearEstimateA}
\int_{[0,N_1^{-\alpha}] \times \T_\gamma^2} \big| e^{it \Delta} f_1 e^{it \Delta} f_2 e^{it \Delta} f_3 \big|^{\frac{4}{3}} dx dt \lesssim_\varepsilon \nu^{-1+\frac{\beta}{2(1+\beta)}} N_1^{- \frac{\alpha \beta}{12} + \varepsilon }.
\end{equation}
\end{theorem}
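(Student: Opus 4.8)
The plan is to follow the two-dimensional decoupling iteration of Bourgain--Demeter as presented in Chapter 10 of Demeter's book, but run it at the frequency-dependent scale $N_1^{-\alpha}$ rather than at scale $1$, and interleaved with a bilinear (short-time) Strichartz input that exploits the separation condition \eqref{eq:SeparationCondition}. First I would rescale: writing $\xi = N_1 \xi'$ and adapting the time variable accordingly, the time interval $[0,N_1^{-\alpha}]$ becomes $[0,N_1^{2-\alpha}]$ and the three Fourier supports become $N_3/N_1 = N_1^{-\beta/(1+\beta)}$-caps on the (rescaled) paraboloid; the transversality hypothesis \eqref{eq:Transversality3d} turns into quantitative transversality $|\mathfrak n_1 \wedge \mathfrak n_2 \wedge \mathfrak n_3| \gtrsim \nu$ of the associated normals, exactly as flagged in the excerpt. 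After rescaling, the target \eqref{eq:TrilinearEstimateA} is an $L^{4/3}$ trilinear estimate at the critical exponent $p=4$ for the paraboloid in $\R^2$, on a ball of radius $R \sim N_1^{2-\alpha}$ (after a further parabolic-rescaling/almost-periodicity reduction from the torus to $\R^2$ via Demeter's periodization lemma, noting that decoupling does not see the torus $\T^2_\gamma$ versus $\R^2$, square or irrational).

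The core is then the trilinear decoupling iteration. I would split $e^{it\Delta}f_i = \sum_{\tau} e^{it\Delta} f_{i,\tau}$ into caps $\tau$ at an intermediate scale, apply the $\ell^2$-decoupling inequality \eqref{eq:l2DecouplingIntroduction} (in the linear form, at exponent $p=4$, which is the endpoint $2(d+2)/d$ for $d=2$) to each factor, and then use the bilinear Strichartz estimate to gain from the separation of $\mathrm{supp}(\hat f_1)$ and $\mathrm{supp}(\hat f_2)$: a product of two transversal wave packets living on $\delta$-caps at separation $\gtrsim 1$ obeys an $L^2$ bound with a gain of a power of $\delta$ over the trivial bound, and the short-time feature means we only lose a logarithm in the number of time subintervals of length comparable to $\delta^{-1}$ rather than a power. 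Iterating the decoupling--bilinear step down through dyadic cap scales $\delta = N_3/N_1, \dots$, each pass contributes a factor of the form $(\log N_1)^{O(1)} \delta^{c}$ with $c$ proportional to $\beta$ (since $N_3/N_1 = N_1^{-\beta/(1+\beta)}$ governs how many cap scales separate the "coarse" scale $N_3$ from the "fine" scale $1$), and the number of iterations is $O(\log N_1)$. Tracking constants carefully — this is where the efficient-congruencing-style bookkeeping pays off, as in Theorem 1.2 — produces the exponent $-\alpha\beta/12$ on $N_1$ and the polynomial-in-$\nu$ transversality loss $\nu^{-1+\beta/(2(1+\beta))}$; the fractional improvement $+\beta/(2(1+\beta))$ in the $\nu$-exponent over the naive $\nu^{-1}$ is precisely what one extracts from the bilinear step before the transversality is "used up."

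The main obstacle I expect is the interaction between the frequency-dependent time scale and the decoupling geometry: standard $\ell^2$-decoupling is stated on a ball $B(0,R)$ in \emph{space-time}, but here the time extent $N_1^{2-\alpha}$ is shorter than the spatial extent dictated by the caps unless $\alpha$ is small, so one must run decoupling on anisotropic (parabolic) slabs and check that the weight $w_{B(0,R)}$ and the parabolic rescaling are compatible with the shorter time interval — equivalently, verify that at each stage the relevant "uncertainty" is still governed by $\delta^{-1}$ in time. A secondary technical point is making the transition from $\T^2_\gamma$ to $\R^2$ uniform in $\gamma \in (\tfrac12,1]$ and lossless up to $N_1^\varepsilon$, and ensuring the bilinear Strichartz estimate on the short time interval (the two-dimensional analogue of Hani's estimate, restricted to $[0,\delta^{-1}]$) holds with only a logarithmic loss; I would handle both by the standard almost-orthogonality decomposition of the short interval into $\sim N_1^\varepsilon$ subintervals and summing, absorbing the count into the $N_1^\varepsilon$ and $(\log N_1)^{O(1)}$ factors already present in the statement.
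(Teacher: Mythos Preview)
Your overall scaffolding --- rescale to a ball of radius $N_1^{2-\alpha}$, run a decoupling iteration, and exit via a short-time bilinear Strichartz estimate --- matches the paper. But two points in the mechanism are misidentified, and following your description as written would not produce the stated constants.

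First, the engine of the iteration is not linear $\ell^2$-decoupling applied to each factor separately. The paper runs the \emph{trilinear} Bourgain--Demeter iteration, whose key step is a ball-inflation inequality (Proposition~\ref{prop:BallInflation}) resting on Guth's \emph{endpoint trilinear Kakeya} estimate \eqref{eq:QuantitativeTrilinearKakeya}. This is where the transversality hypothesis \eqref{eq:Transversality3d} actually enters: each application of ball inflation costs a factor $\nu^{-1/2}$, and after $n$ iterations of the resulting multiscale inequality (with $\kappa_4=\tfrac12$) the accumulated cost is $\nu^{-(1-2^{-n})}$. The improvement $+\beta/(2(1+\beta))$ over the naive $\nu^{-1}$ therefore comes from the \emph{incompleteness} of this geometric series, not from the bilinear step as you suggest. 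The bilinear Strichartz input \eqref{eq:DeltaTimeInterval} is applied only once, at the final exit (Proposition~\ref{prop:EstimateApp}), and it uses only the separation condition \eqref{eq:SeparationCondition} between two of the three centres --- not the full trilinear transversality.

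Second, the number of iterations is not $O(\log N_1)$. Each pass of the multiscale inequality squares the cap scale (from $2^{-m}$ to $2^{-2m}$), so reaching scale $N_1^{-1+\alpha/2}$ from the initial scale $N_3/N_1 \le N_1^{-\beta/(1+\beta)}$ takes at most $n^* = \lceil \log_2(\beta^{-1}+1)\rceil$ steps, a constant depending only on $\beta$. This boundedness is exactly what makes the bookkeeping close: the bilinear exit yields a gain $N_1^{-\alpha/3}$, and raising it to the power $\kappa_4^{n^*}=2^{-n^*}\ge \beta/(2(1+\beta))$ produces the exponent $-\alpha\beta/12$. If the iteration genuinely ran $O(\log N_1)$ times, this gain would be diluted to $N_1^{-o(1)}$ and the theorem would not follow.
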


\begin{remark}
By applying H\"older's inequality and the $L^4_{t,x}$-Strichartz estimate we obtain
\begin{equation*}
\begin{split}
\int_{[0,N^{-\alpha}] \times \T^2_\gamma} \prod_{i=1}^3 \big| e^{it \Delta} f_i \big|^{\frac{4}{3}} dx dt &\lesssim \big( \prod_{i=1}^3 \| e^{it \Delta} f_i \|_{L^4_{t,x}([0,N^{-\alpha}] \times \T^2_\gamma)} \big)^{\frac{4}{3}} \\
&\lesssim_\varepsilon N^{\varepsilon} \big( \prod_{i=1}^3 \| f_i \|_2 \big)^{\frac{4}{3}}.
\end{split}
\end{equation*}
Hence, if the transversality degenerates too much, then the multilinear estimates from Theorem \ref{thm:TrilinearStrichartzEstimates2d} cannot improve on the linear argument.
\end{remark}

\bigskip

We hope that the estimates proved in Theorem \ref{thm:ImprovedStrichartzQuantified} and Theorem \ref{thm:TrilinearStrichartzEstimates2d} can be used in the analysis of mass-critical nonlinear Schr\"odinger equations as pioneered in \cite{Bourgain2004}:
\begin{equation}
\label{eq:MassCriticalNLSIntroduction}
\left\{ \begin{array}{cl}
i \partial_t u + \Delta u &= \pm |u|^{\frac{4}{d}} u, \quad (t,x) \in \R \times \T^d, \\
u(0) &= u_0 \in H^s(\T^d).
\end{array} \right.
\end{equation}

In two dimensions, Herr--Kwak \cite{HerrKwak2023} recently reported how small mass global well-posedness for any $s>0$ follows from a refined $L^4_{t,x}$-Strichartz estimate. They show
\begin{equation}
\label{eq:2dLogarithmicStrichartz}
\| P_S e^{it \Delta} f \|_{L^4([0,(\log \# S)^{-1}],L^4(\T^2))} \lesssim \| f \|_{L^2(\T^2)}
\end{equation}
for any bounded set $S \subseteq \Z^2$ and that this implies global well-posedness for $s>0$. Above $P_S$ denotes the Fourier projection onto $S$. 

This motivates to study the short-time Strichartz estimate
\begin{equation*}
\| P_N e^{it \Delta} f \|_{L^p_{t,x}([0,N^{-\alpha}] \times \T^d)} \lesssim \| f \|_{L^2}
\end{equation*}
for $\alpha \in (0,1]$ at the critical exponent $p=\frac{2(d+2)}{d}$.
From this point of view the estimates from Theorems \ref{thm:ImprovedStrichartzQuantified} and \ref{thm:TrilinearStrichartzEstimates2d} give a quantified version of trilinear Strichartz estimates on frequency-dependent time intervals. In Section \ref{section:StrichartzEstimatesLargeTorus} we translate the multilinear refinements to rescaled tori $\lambda \T^d$ on the unit time interval. This is the form, in which multilinear Strichartz estimates were applied in the context of the $I$-method; see \cite{CollianderKeelStaffilaniTakaokaTao2003,FanStaffilaniWangWilson2018,
DeSilvaPavlovicStaffilaniTzirakis2007}. Notably, Fan \emph{et al.} \cite{FanStaffilaniWangWilson2018} proved bilinear Strichartz estimates on rescaled tori via decoupling, and the question about trilinear decoupling inequalities was raised. Although the multiscale inequalities derived in the forthcoming sections do not provide us with trilinear decoupling, the argument provides us with genuinely trilinear Strichartz estimates.

\bigskip

The question for global well-posedness of \eqref{eq:MassCriticalNLSIntroduction} motivates to study Strichartz estimates without derivative loss on frequency-dependent time intervals:
\begin{equation}
\label{eq:ShorttimeStrichartzSEQIntroduction}
\| P_N e^{it \Delta} f \|_{L_{t,x}^{p}([0,N^{-\alpha}] \times \T^d)} \lesssim \| f \|_{L^2(\T^d)}.
\end{equation}
at the critical exponent $p=\frac{2(d+2)}{d}$.

Recall that for $\alpha = 1$ \eqref{eq:ShorttimeStrichartzSEQIntroduction} is true on arbitrary compact Riemannian manifolds (without boundary) due to Burq--G\'erard--Tzvetkov \cite{BurqGerardTzvetkov2004} (see also Staffilani--Tataru \cite{StaffilaniTataru2002}). On the other hand, Bourgain \cite{Bourgain1993A} pointed out that \eqref{eq:ShorttimeStrichartzSEQIntroduction} fails for $\alpha = 0$ in one dimension; see also \cite{TakaokaTzvetkov2001} for higher dimensions. (Since $\ell^2$-decoupling implies discrete Fourier restriction, this yields that the constant $R^\varepsilon$ in \eqref{eq:l2DecouplingIntroduction} cannot be removed.) Kishimoto \cite{Kishimoto2014} furthermore pointed out that \eqref{eq:MassCriticalNLSIntroduction} cannot be analytically well-posed in $L^2(\T^d)$ for $d=1,2$.
We conjecture that, similarly to \eqref{eq:2dLogarithmicStrichartz}, the estimate
\begin{equation*}
\| P_N e^{it \Delta} f \|_{L^6_{t,x}([0,\log(N)^{-1}] \times \T)} \lesssim \| f \|_{L^2(\T)}
\end{equation*}
holds true. This would imply
\begin{equation*}
\| P_N e^{it \Delta} f \|_{L^6_{t,x}(\T \times \T)} \lesssim \log(N)^{\frac{1}{6}} \| f \|_{L^2(\T)},
\end{equation*}
which matches Bourgain's counterexample \cite{Bourgain1993A}.

\medskip

We analyze \eqref{eq:ShorttimeStrichartzSEQIntroduction} via decoupling and obtain smoothing estimates for $2 \leq p < \frac{2(d+2)}{d}$; see Proposition \ref{prop:ShorttimeStrichartzLowP}.
 For $p=\frac{2(d+2)}{d}$ we find smoothing as soon we increase the dispersion considering propagators $e^{it \Delta} \to e^{it (-\Delta)^{a/2}}$ for $a > 2$. Note that smoothing is not expected to hold for the Schr\"odinger propagator for $\alpha \in (0,2]$, not even in Euclidean space: For $\alpha = 0$ the Knapp example rules out smoothing and for $\alpha = 2$ we can choose highly localized initial data, which exhausts Sobolev embedding.
The smoothing estimates for the Airy propagator $e^{t \partial_x^3}$ on $\T$ is recorded in Proposition \ref{prop:ShorttimeStrichartzAiry}.

\medskip

Finally, in Section \ref{section:ExistenceMKDV} we use the Strichartz estimates for the Airy propagator on frequency-dependent time scales to prove the sharp existence result for the modified Korteweg-de Vries equation in Sobolev regularity. In \cite{Schippa2020} a conditional result on non-existence of the mKdV equation
\begin{equation}
\label{eq:MKDVIntroduction}
\left\{ \begin{array}{cl}
\partial_t u + \partial_x^3 u &= \pm u^2 \partial_x u, \quad (t,x) \in \R \times \T, \\
u(0) &= u_0 \in H^s(\T)
\end{array} \right.
\end{equation}
was proved, which states that existence of solutions to \eqref{eq:MKDVIntroduction} only holds for the Wick ordered nonlinearity for $s<0$. With the smoothing estimates from Section \ref{section:LinearSmoothing} at hand, the non- existence result for negative Sobolev regularity follows. This points out that the established short-time Strichartz estimates are also relevant in applications to quasilinear equations. We refer to \cite{SchippaPhD2019} for further reading.

\medskip

\emph{Outline of the paper.} In Section \ref{section:Preliminaries} we prepare the proof of Theorem \ref{thm:ImprovedStrichartzQuantified} by introducing notations and giving a detailed overview of the argument. This is made rigid in Section \ref{section:ProofTheoremB}. In Section \ref{section:LogarithmicRefinement} we show the logarithmic improvement in Theorem \ref{thm:LogarithmicRefinementTrilinearStrichartz1d}. In Section \ref{section:TrilinearStrichartz2d} we prove Theorem \ref{thm:TrilinearStrichartzEstimates2d}. In Section \ref{section:StrichartzEstimatesLargeTorus} we translate the refined Strichartz estimates to estimates on tori with large period at finite times. Next, we show new linear short-time Strichartz estimates with smoothing effect in Section \ref{section:LinearSmoothing}, which are applied to prove the sharp existence result for the modified Korteweg-de Vries equation in Section \ref{section:ExistenceMKDV}. In the Appendix we show how the Fefferman--Córdoba square function estimate implies the bilinear short-time Strichartz estimate in one dimension.

\medskip

\textbf{Basic notations:}

\begin{itemize}
\item $\T = \R / (2 \pi \Z)$ denotes the torus in one dimension,
\item $(x,t)$ denotes space-time variables and $\xi$ denotes the dual frequency variable to $x$,
\item $\hat{f}$ denotes the Fourier transform of $f: \T^d \to \C$:
\begin{equation*}
\hat{f}(k) =  \int_{\T^d} e^{-i x k} f(x) dx, \quad k \in \Z^d.
\end{equation*}
For a function $F: \R \times \T^d \to \C$ we denote with $\hat{F}$ the space-time Fourier transform.
\item $H^s(\T^d)$ and $H^s(\R^d)$ denote the $L^2$-based Sobolev spaces with Sobolev regularity $s \in \R$. The Sobolev norms are given by
\begin{equation*}
\| f \|_{H^s(\T^d)}^2 = \sum_{k \in \Z^d} \langle k \rangle^{2s} |\hat{f}(k)|^2, \quad \| f \|_{H^s(\R^d)}^2 = \int_{\R^d} \langle \xi \rangle^{2s} |\hat{f}(\xi)|^2 d\xi.
\end{equation*}
\item We denote with $P_N$ smooth Littlewood-Paley projections to frequencies of modulus in $[N/2,2N]$ for $N \in 2^{\N}$. $P_1$ denotes smooth frequency projection to the ball of radius $2$. Capital numbers usually denote dyadic numbers $N \in 2^{\N_0}$.
\item $A \lesssim B$ means there is a harmless constant $C >0$ such that $A \leq C B$. Further dependencies of the constant $C$ are indicated with subindices, e.g. $A \lesssim_\varepsilon B$ means there is $C_\varepsilon = C(\varepsilon)$ such that $A \leq C_\varepsilon B$.
\end{itemize}

\section{Preliminaries}
\label{section:Preliminaries}

In this section we prepare the proof of Theorem \ref{thm:ImprovedStrichartzQuantified}. We collect basic arguments for future reference, which are presently described in detail.

We shall use the currently best result on linear discrete Fourier restriction for the paraboloid due to Guo--Li--Yung:
\begin{theorem}[{\cite[Theorem~1.1]{GuoLiYung2021}}]
\label{thm:DiscreteStrichartzL6}
For every $\varepsilon > 0$, there exists a constant $C_\varepsilon > 0$ such that
\begin{equation*}
\| \sum_{k=1}^N f_n e^{i (kx - k^2 t)} \|_{L^6(\T^2)} \leq C_\varepsilon \big( \log(N) \big)^{2+ \varepsilon} \big( \sum_{k=1}^N |f_k|^2 \big)^{\frac{1}{2}}.
\end{equation*}
\end{theorem}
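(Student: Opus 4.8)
The plan is to adapt the high--low frequency decomposition method of Guth--Maldague--Wang \cite{GuthMaldagueWang2020} to the discrete setting of exponential sums, the point being that on $\Z$ one may replace the polynomially decaying weights $w_{B(0,R)}$ appearing in the continuous decoupling inequality \eqref{eq:l2DecouplingIntroduction} by \emph{exact} Fourier orthogonality. Write $Ef(x,t) = \sum_{k=1}^N f_k e^{i(kx - k^2 t)}$ and normalize $\sum_k |f_k|^2 = 1$; then it suffices to bound $\|Ef\|_{L^6(\T^2)}^2 = \| |Ef|^2 \|_{L^3(\T^2)}$. The starting observation is that the space--time Fourier transform of $|Ef|^2 = \sum_{k,k'} f_k \overline{f_{k'}} e^{i((k-k')x - (k^2-k'^2)t)}$ is supported on the lattice points $(k-k', -(k-k')(k+k'))$. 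The diagonal $k = k'$ contributes the constant $\sum_k |f_k|^2 = 1$, and the remaining mass splits dyadically according to the frequency separation $M = |k-k'| \in [1,N]$, giving $|Ef|^2 = 1 + \sum_{1 \le M \le N,\; M \text{ dyadic}} g_M$ with $g_M$ collecting the pairs $|k-k'| \sim M$; in particular $g_M$ has $x$-frequencies of modulus comparable to $M$.

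For each dyadic $M$ I would partition $\{1,\dots,N\}$ into $\sim N/M$ consecutive blocks $I$ of length $\sim M$, write $f = \sum_I f_I$, and note that $g_M$ receives contributions only from neighbouring pairs $(I,I')$. The crucial estimate --- the analogue of the fact in \cite{GuthMaldagueWang2020} that the ``high part'' of $|Ef|^2$ at a given scale is controlled by the square function at that scale --- is a bound of the form $\|g_M\|_{L^3(\T^2)} \lesssim_\varepsilon (\log N)^{\varepsilon}\, \big\| \sum_I |Ef_I|^2 \big\|_{L^3(\T^2)}$, obtained by combining $L^2$-orthogonality of the blocks on the frequency side with the transversality (bilinear Kakeya type) geometry of the parabola, the discreteness again allowing the orthogonality step to be carried out exactly rather than up to $N^\varepsilon$. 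Parabolic rescaling then enters: each block $I$ of length $M$, after the affine map adapted to the parabola, reproduces the full configuration at scale $N/M$. Writing $\mathcal{K}(N)$ for the best constant in the theorem, this turns the scale-$M$ estimate into a recursion for $\mathcal{K}(N)$: one picks up a factor $\log N$ from summing the $\sim \log N$ dyadic scales $M$, and a second factor $\log N$ from iterating the recursion --- the square function at scale $M$ being estimated by $\mathcal{K}(M)$ on each rescaled block together with an $\ell^2 \to \ell^6$ passage --- down to unit scale. Tracking these two logarithmic losses and absorbing the (constant, lower-order) diagonal term yields $\mathcal{K}(N) \lesssim_\varepsilon (\log N)^{2+\varepsilon}$.

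The main obstacle is making the scale-$M$ ``high vs.\ square function'' step genuinely loss-free up to $(\log N)^\varepsilon$: this requires the reverse square function / local $L^2$ estimate for the parabola with a constant that does not degrade like $N^\varepsilon$, which is exactly why one must exploit the exact lattice orthogonality and the transversal bilinear interaction of blocks instead of black-boxing $\ell^2$-decoupling \eqref{eq:l2DecouplingIntroduction}. A secondary difficulty is the bookkeeping of the induction so that precisely two powers of $\log N$, and no more, are lost --- this is where the efficiency of the high--low iteration over the cruder pigeonholing of earlier approaches (in the spirit of efficient congruencing, cf.\ \cite{Wooley2016,Li2020}) is essential. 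I note finally that a cheaper but weaker alternative is to transfer the continuous estimate of \cite{GuthMaldagueWang2020} on $B_{N^2} \subseteq \R^2$ to $\T^2$ by periodization and rescaling, which already delivers a constant $(\log N)^{O(1)}$ but with a non-optimal exponent in place of $2+\varepsilon$.
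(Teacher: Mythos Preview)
The paper does not prove this theorem at all: it is quoted verbatim as \cite[Theorem~1.1]{GuoLiYung2021} and used as a black box (in Subsection~\ref{subsection:InitialReductions} and in Lemma~\ref{lem:LinearExitLemma}) to control the linear $L^6$ pieces that arise when exiting the decoupling iteration. There is therefore no ``paper's own proof'' to compare your sketch against.

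That said, your outline is broadly in the spirit of the actual argument in \cite{GuoLiYung2021}, which does adapt the high--low method of \cite{GuthMaldagueWang2020} to the lattice setting and exploits exact arithmetic orthogonality to avoid the $N^\varepsilon$ losses of generic decoupling. But as written it is only a roadmap, not a proof: the step you flag as the ``main obstacle'' --- obtaining $\|g_M\|_{L^3} \lesssim_\varepsilon (\log N)^\varepsilon \big\| \sum_I |Ef_I|^2 \big\|_{L^3}$ --- is precisely the heart of the matter, and you have not indicated how to get it. In \cite{GuoLiYung2021} this requires a careful wave-packet / pruning argument with several layers of dyadic pigeonholing whose losses must be tracked explicitly; the ``bilinear Kakeya type geometry'' you invoke is not by itself enough, and the bookkeeping that produces exactly $2+\varepsilon$ (rather than $O(1)$) powers of $\log N$ is genuinely delicate. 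Your final paragraph correctly identifies that simply transferring \cite{GuthMaldagueWang2020} by periodization gives only $(\log N)^{O(1)}$ with an uncontrolled exponent, so the discrete refinement is essential --- but you have not supplied it.
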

It is conjectured (based on an example due to Bourgain \cite{Bourgain1993A}) that the exponent can be lowered to $1/6$. Note that this would not give an essential improvement of the present result.

\subsection{Initial reductions}
\label{subsection:InitialReductions}
We can normalize $\| f_j \|_{L^2} = 1$. Next, we apply H\"older's inequality to find from Theorem \ref{thm:DiscreteStrichartzL6}
\begin{equation}
\label{eq:InitialReduction1d}
\begin{split}
\int_{[0,N_1^{-\alpha}] \times \T} |e^{it \Delta} f_1 e^{it \Delta} f_2 e^{it \Delta} f_3|^2 dx dt &\lesssim \big( \int_{[0,N_1^{-\alpha}] \times \T} |e^{it \Delta} f_1|^2 |e^{it \Delta} f_3|^4 \big)^{\frac{1}{2}} \\
&\quad \times \big( \int_{[0,N_1^{-\alpha}] \times \T} |e^{it \Delta} f_2|^6 \big)^{\frac{1}{3}} \big( \int_{[0,N_1^{-\alpha}] \times \T} |e^{it \Delta} f_1|^6 \big)^{\frac{1}{6}} \\
&\lesssim_\varepsilon \log(N_1)^{6+\varepsilon} \big( \int_{[0,N_1^{-\alpha}] \times \T} |e^{it \Delta} f_1|^2 |e^{it \Delta} f_3|^4 \big)^{\frac{1}{2}}.
\end{split}
\end{equation}

By almost orthogonality we decompose $f_1 = \sum_{i_1} f_{1,i_1}$ into functions with Fourier support contained in intervals of size $N_3$ and assume in the following that $f_1$ is Fourier supported in an interval of size $N_3$. It suffices to prove an estimate
\begin{equation}
\label{eq:OrthogonalReduction}
\int_{[0,N_1^{-\alpha}] \times \T} |e^{it \Delta} f_{1}|^2 |e^{it \Delta} f_{3}|^4  \lesssim \, C' \, \|f_1\|_2^2 \| f_3 \|_2^4
\end{equation}
with $C' = C_{\varepsilon,\beta} \log(N_1)^{12+\varepsilon} N_1^{-\frac{\alpha \beta}{8}}$ taking into account the initial reduction in \eqref{eq:InitialReduction1d}. To ease notation, redenote $f_3$ with $f_2$.

\medskip

The left hand-side in \eqref{eq:OrthogonalReduction} will be the outset of the asymmetric decoupling due to Li \cite{Li2020} after continuous approximation (see also follow-up works \cite{Li2021,GuoLiYungZorinKranich2021}).

\subsection{Continuous approximation}
\label{subsection:ContinuousApproximation1d}
We denote $N=N_1$ for brevity and
\begin{equation*}
\hat{f}_1(k)= a_{1,k}, \quad \hat{f}_2(k)= a_{2,k}.
\end{equation*}
In the first step we use periodicity and rescaling such that we integrate over a space-time ball of size $N^{2-\alpha}$. We have
\begin{equation}
\label{eq:BilinearExpression}
\begin{split}
&\quad \| e^{it \Delta} f_1 |e^{it \Delta} f_2|^2 \|_{L_{t,x}^2([0,N^{-\alpha}] \times \T)}^2 \\
&= \big\| \sum_{k_1 \in \Z} e^{i(k_1 x - t k_1^2)} a_{1,k_1} \big( \sum_{k_2 \in \Z} e^{i(k_2 x - t k_2^2)} a_{2,k_2} \big)^2 \big\|_{L_{t,x}^2([0,N^{-\alpha}] \times \T)}^2.
\end{split}
\end{equation}
We carry out the changes of variables $x \to \frac{x}{N}$, $k_2 \to N k_2$, $t \to N^{-2} t$, and denote $b_{i,k_i} = a_{i,N k_i}$ accordingly:
\begin{equation*}
\begin{split}
\eqref{eq:BilinearExpression} &= N^{-1} \big\| \big( \sum_{\substack{k_1 \in \Z / N, \\ k_1 \in I_1}} e^{i(k_1 x - N^2 t k_1^2)} b_{1,k_1} \big) \big( \sum_{\substack{k_2 \in \Z / N, \\ k_2 \in I_2}} e^{i(k_2 x - N^2 t k_2^2)} b_{2,k_2} \big)^2 \|_{L_{t,x}^2([0,N^{-\alpha}] \times N \T)}^2 \\
&= N^{-3} \big\| \big( \sum_{\substack{k_1 \in \Z / N, \\ k_1 \in I_1}} e^{i(k_1 x - N^2 t k_1^2)} b_{1,k_1} \big) \big( \sum_{\substack{k_2 \in \Z / N, \\ k_2 \in I_2}} e^{i(k_2 x - N^2 t k_2^2)} b_{2,k_2} \big)^2 \|_{L_{t,x}^2([0,N^{2-\alpha}] \times N \T)}^2.
\end{split}
\end{equation*}

Next, we use periodicity in $x$:
\begin{equation*}
\eqref{eq:BilinearExpression} = N^{-(4-\alpha)} \big\| \big| \sum_{\substack{k_1 \in \Z/N, \\ k_1 \in I_1}} e^{i(k_1 x - t k_1^2)} b_{1,k_1} \big| \big| \sum_{\substack{k_2 \in \Z/N,\\ k_2 \in I_2}} e^{i(k_2 x - t k_2^2)} b_{2,k_2} \big|^2 \big\|^2_{L_{t,x}^2([0,N^{2-\alpha}]^2 )}.
\end{equation*}
The intervals $I_i$ are of length less than $N^{-\frac{\beta}{1+\beta}}$ and have a distance $\gtrsim 1$. We obtain a factor $N^{-4+\alpha}$ from the continuous approximation. When we reverse the continuous approximation, the inverse of this factor will come up. In the following we focus on
\begin{equation*}
\big\| \big| \sum_{\substack{k_1 \in \Z/N, \\ k_1 \in I_1}} e^{i(k_1 x - t k_1^2)} b_{1,k_1} \big| \big| \sum_{\substack{k_2 \in \Z/N,\\ k_2 \in I_2}} e^{i(k_2 x - t k_2^2)} b_{2,k_2} \big|^2 \big\|^2_{L_{t,x}^2([0,N^{2-\alpha}]^2 )}.
\end{equation*}

Now we approximate the exponential sums by Fourier extension operators:
\begin{equation*}
| \sum e^{i (k_1 x - t k_1^2)} b_{1,k_1} | \big| \sum e^{i(k_2 x - t k_2^2)} b_{2,k_2} \big|^2  \lesssim | \mathcal{E} \tilde{f}_1 | |\mathcal{E} \tilde{f}_2|^2  \text{ on } B_{N^{2-\alpha}}
\end{equation*}
with
\begin{equation*}
\mathcal{E} \tilde{f}(x,t) = \int_{\{ |\xi| \leq 1 \}} e^{i(x\cdot \xi - t |\xi|^2)} \tilde{f}(\xi) d\xi.
\end{equation*}
The support in $\xi$ for $\tilde{f}_i$ is given by $I_i$. With $\psi \in C^\infty_c(B(0,1))$, $\psi \geq 0$, $\int \psi dx = 1$, we have
\begin{equation*}
\tilde{f}_i(\xi) = \sum_{k \in \Z/N} b_{i,k} \lambda \psi(\lambda(\xi-k))
\end{equation*}
such that for $\lambda \to \infty$ we obtain in $\mathcal{S}'(\R)$
\begin{equation*}
\tilde{f}_i(\xi) = \sum_{k \in \Z/N} b_{i,k} \delta(\xi-k).
\end{equation*}
In the following suppose that $\lambda \geq N$ is large enough  such that the above expression becomes
\begin{equation*}
\begin{split}
&\quad \big\| \big| \sum_{\substack{k_1 \in \Z/N, \\ k_1 \in I_1}} e^{i(k_1 x - t k_1^2)} b_{1,k_1} \big| \big| \sum_{\substack{k_2 \in \Z/N,\\ k_2 \in I_2}} e^{i(k_2 x - t k_2^2)} b_{2,k_2} \big|^2 \big\|^2_{L_{t,x}^2([0,N^{2-\alpha}] \times [0,N^{2-\alpha}])} \\
&\lesssim \| \mathcal{E} \tilde{f}_1 |\mathcal{E} \tilde{f}_2|^2 w^3_{B_{N^{2-\alpha}}} \|_{L^2(\R^2)}^2.
\end{split}
\end{equation*}

Above we let $w_{B_{N^{2-\alpha}}}$ be a Schwartz weight for $B_{N^{2-\alpha}}$, which is rapidly decaying off the ball and compactly Fourier supported. Note that multiplication with $w_{B_{N^{2-\alpha}}}$ is equivalent to blurring the Fourier support on the scale $N^{-2+\alpha}$. $\mathcal{E} \tilde{f}_i \cdot w_{B_{N^{2-\alpha}}} = g_i$ has (space-time) Fourier support in the $N^{-(2-\alpha)}$-neighbourhood of $\{(\xi,-\xi^2) : \xi \in I_i \}$.

\medskip

By reversing the continuous approximation we mean taking the limit $\lambda \to \infty$ such that the Fourier extension operator becomes an exponential sum.  Note that the continuous approximation must be reversed on the level of extension operators or equivalently, space-time estimates. On the level of $L_x^2$-norms it is too costly.

\medskip

We would like to decouple to the scale $N^{-1}$ (since we rescaled by $N$, the spacing between the frequencies is now $1/N$). However, the finest spatial scale we can reach by decoupling respecting the weight function is given by $N^{-1+\alpha/2}$. At this scale we exit the decoupling iteration.

Let $\pi_j: \R^2 \to \R$, $x \mapsto x_j$ denote coordinate projections and $I_j \subseteq [0,1]$ intervals. In the following we aim to decouple the expression
\begin{equation*}
\int |g_{I_1}|^4 |g_{I_2}|^2
\end{equation*}
with $\text{supp} (\pi_1(\hat{g}_{I_j})) \subseteq I_j \subseteq [0,1]$ with $|I_j| \lesssim N^{-\frac{\beta}{1+\beta}}$ and $\text{dist}(I_1,I_2) \sim 1$. In the following we indicate the spatial frequency support by intervals as subindices and the weight $w_{B_{N^{2-\alpha}}}$ is absorbed into the functions.

\subsection{Overview of the decoupling argument}
\label{subsection:Overview}
In the following we carry out a decoupling iteration inspired by efficient congruencing (cf. \cite{Wooley2016,Li2020,Li2021,GuoLiYungZorinKranich2021}). Let $I_1$ and $I_2$ be intervals of length $\tilde{\delta}$. By an almost orthogonality argument based on transversality of the separated regions of the paraboloid, we have morally
\begin{equation*}
\int |g_{I_1}|^4 |g_{I_2}|^2 \lesssim \sum_{\substack{I_2^2 \subseteq I_2, \\ |I_2^2| = \tilde{\delta}^2}} \int |g_{I_1}|^4 |g_{I^2_2}|^2
\end{equation*}
with intervals $I_2^2$ partitioning $I_2$. Now we exchange the roles of $g_{I_1}$ and $g_{I_2}$ by using Cauchy-Schwarz:
\begin{equation}
\label{eq:FirstDecoupling}
\sum_{\substack{I_2^2 \subseteq I_2, \\ |I_2^2| = \tilde{\delta}^2}} \int |g_{I_1}|^4 |g_{I^2_2}|^2 \lesssim \| g_{I_1} \|_{L^6}^3 \sum_{\substack{I_2^2 \subseteq I_2, \\ |I_2^2| = \tilde{\delta}^2}} \big( \int |g_{I_1}|^2 |g_{I^2_2}|^4 \big)^{\frac{1}{2}}.
\end{equation}
The second expression can be decoupled further. The first expression loses a logarithmic factor, which is acceptable.
For the first expression we reverse the scaling:
\begin{equation*}
\| \mathcal{E} \tilde{f}_{I_1} w_{B_{N^{2-\alpha}}} \|^6_{L^6_{t,x}(\R^2)} \lesssim N^3 \| \mathcal{E} \tilde{f}_{I_1} w_{N^{-\alpha}}(t) w_{N^{1-\alpha}}(x) \|_{L^6_{t,x}}^6.
\end{equation*}
Now we take the limit $\lambda \to \infty$ to recover the exponential sum (``reversing the continuous approximation"):
\begin{equation*}
\big\| \sum_{k \in \Z} a_{i,k} e^{i(kx - tk^2)} w_{N^{-\alpha}}(t) w_{N^{1-\alpha}}(x) \|^6_{L^6_{t,x}(\R^2)}.
\end{equation*}
Since we are proving $L^2$-based estimates, it suffices to show an estimate for
\begin{equation*}
N^3 \big\| \sum_{k \in \Z} a_{i,k} e^{i(kx - tk^2)} \big\|^6_{L^6_{t}([0,N^{-\alpha}],L^6_x([0,N^{1-\alpha}]))}.
\end{equation*}
By periodicity, we obtain
\begin{equation*}
\begin{split}
&\quad N^3 \big\| \sum_{k \in \Z} a_{i,k} e^{i(kx - tk^2)} \big\|^6_{L^6_{t}([0,N^{-\alpha}],L^6_x([0,N^{1-\alpha}]))} \\
 &= N^{4-\alpha} \big\| \sum_{k \in \Z} a_{i,k} e^{i(kx - tk^2)} \big\|^6_{L^6_{t}([0,N^{-\alpha}],L^6_x(\T))} \lesssim N^{4-\alpha} \log(N)^{12+6 \varepsilon} \| a_i \|_{\ell^2}^6.
\end{split}
\end{equation*}
Roughly speaking, whenever we reverse the continous approximation and switch back to the periodic setting, we get a factor $N^{4-\alpha}$. This is compensated by the factor $N^{-4+\alpha}$ from the initial change of variables to normalize to unit frequencies.

\medskip

With any iteration we can lower the scale from $\tilde{\delta}$ to $\tilde{\delta}^2$. So, in order to reach the interval of length $N^{-1}$ (actually we can only reach $N^{-1+\frac{\alpha}{2}}$ due to time localization), $M$ iterations suffice with $M$ given by
\begin{equation*}
M = \min\{ \ell \in \N : N^{-\frac{\beta}{1+\beta} 2^{\ell}} = N^{-1} \}.
\end{equation*}
We choose as upper bound for the number of iterations
\begin{equation*}
m = \lceil \log_2(\beta^{-1}+1) \rceil.
\end{equation*}

We can estimate $\| g_{I_1} \|_{L^6}$ by reversing the approximation steps for periodic functions, which incurs by Strichartz estimates $\lesssim_\varepsilon \log(N_1)^{2+\varepsilon}$. 

 We come back to \eqref{eq:FirstDecoupling}. It suffices to prove an estimate
\begin{equation*}
\int |g_{I_1}|^2 |g_{I_2^2}|^4 \lesssim C N^{4-\alpha} \| b_{I_1} \|_{\ell^2}^2 \| b_{I_2^2} \|_{\ell^2}^4.
\end{equation*}
because we obtain by almost orthogonality
\begin{equation*}
\begin{split}
\sum_{I^2_2} \big( \int |g_{I_1}|^2  |g_{I_2}|^4 \big)^{\frac{1}{2}} &\leq C^{\frac{1}{2}} N^{4-\alpha} \| b_{I_1} \|_{\ell^2} \sum_{I^2_2} \| b_{I_2^2} \|_{\ell^2}^2 \\
&\leq C^{\frac{1}{2}} N^{4-\alpha} \| b_{I_1} \|_{\ell^2} \| b_{I_2} \|^2_{\ell^2}.
\end{split}
\end{equation*}
It is important to sum the $\ell^2$-norms of the coefficients (after reversing the continuous approximation on the level of space-time estimates). We use this decoupling iteration until we reach an expression of the form
\begin{equation*}
\int |g_{\tilde{I}_1}|^2 |g_{\tilde{I}_2}|^4
\end{equation*}
with $|\tilde{I}_1| = |\tilde{I}_2| = N^{-1+\frac{\alpha}{2}}$. We again reverse the continuous approximation to find
\begin{equation*}
\int |g_{\tilde{I}_1}|^2 |g_{\tilde{I}_2}|^4 \lesssim N^{4-\alpha} \int_{[0,N^{-\alpha}] \times \T}  \big| \sum_{k \in \Z} a_{1,k} e^{i(kx - tk^2)} \big|^2 \big| \sum_{k \in \Z} a_{2,k} e^{i(kx-tk^2)} \big|^4 dx dt.
\end{equation*}
We use a short-time bilinear Strichartz estimate due to Moyua--Vega \cite{MoyuaVega2008}. Hani \cite{Hani2012} proved this on general compact manifolds. In the Appendix we show how a transverse (reverse) square function estimate going back to Fefferman and Córdoba \cite{Fefferman1973,Cordoba1982} implies the following by the arguments of the paper:
\begin{theorem}[{\cite[Theorem~1.2]{Hani2012}}]
Suppose that $u_0$, $v_0 \in L^2(\T)$ with $P_{N_1} u_0 = u_0$ and $P_{N_2} v_0= v_0$ with $N_2 \ll N_1$. Then the following estimate holds:
\begin{equation}
\label{eq:ShorttimeBilinearStrichartz}
\| e^{it \Delta} u_0 \, e^{it \Delta} v_0 \|_{L^2([0,N_1^{-1}],L^2(\T))} \lesssim N_1^{-\frac{1}{2}} \| u_0 \|_{L^2(\T)} \| v_0 \|_{L^2(\T)}.
\end{equation}
\end{theorem}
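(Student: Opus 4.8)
The plan is to run the reductions of Subsections~\ref{subsection:InitialReductions}--\ref{subsection:ContinuousApproximation1d} in the bilinear rather than the trilinear setting, reducing to a Euclidean bilinear extension estimate, and then to invoke a transverse reverse square function estimate in place of the decoupling iteration. First I would normalize $\|u_0\|_{L^2(\T)} = \|v_0\|_{L^2(\T)} = 1$, set $a_k = \widehat{u}_0(k)$ (supported in $[N_1,2N_1]$) and $b_k = \widehat{v}_0(k)$ (supported in $[N_2,2N_2]$), and carry out the changes of variables $x \mapsto x/N_1$, $t \mapsto t/N_1^2$, $k \mapsto N_1 k$ exactly as in Subsection~\ref{subsection:ContinuousApproximation1d}. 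This places the two exponential sums on a space-time box of side $\sim N_1$, with the first sum over $N_1^{-1}$-separated frequencies in the arc of the paraboloid over $[1,2]$ and the second over $N_1^{-1}$-separated frequencies in the arc over an interval $J$ of length $N_2/N_1 \ll 1$ lying at distance $\gtrsim 1$ from $[1,2]$; the change of variables produces the factor $N_1^{-3}$ in the squared $L^2$-norm. Passing to the continuous approximation and keeping it on the level of space-time estimates (as emphasized in Subsection~\ref{subsection:Overview}) reduces matters to a bound of the shape
\[
\big\| \mathcal{E} g_1 \, \mathcal{E} g_2 \, w_{B_{N_1}} \big\|_{L^2(\R^2)}^2 \lesssim N_1^2,
\]
where $\widehat{g_1}$, $\widehat{g_2}$ are (essentially) supported in the two transverse arcs above and $w_{B_{N_1}}$ is a Schwartz weight adapted to the box, whose Fourier support may be taken at a scale finer than $N_1^{-1}$; combined with the factor $N_1^{-3}$ this gives precisely the asserted $N_1^{-1/2}$.

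For the key step I would decompose $g_2 = \sum_\ell g_{2,\ell}$ with $\widehat{g_{2,\ell}}$ supported in the $N_1^{-1}$-subintervals of $J$ (one per Fourier mode of $v_0$), so that $\mathcal{E} g_1 \, \mathcal{E} g_2 = \sum_\ell \mathcal{E} g_1 \, \mathcal{E} g_{2,\ell}$. The space-time Fourier support of $\mathcal{E} g_1 \, \mathcal{E} g_{2,\ell}$ is contained in the $O(N_1^{-1})$-neighbourhood of the image of $[1,2] \times \text{supp}(\widehat{g_{2,\ell}})$ under $(\xi,\eta) \mapsto (\xi + \eta, -\xi^2 - \eta^2)$; since the Jacobian of this map equals $2(\eta - \xi)$ with $|\eta - \xi| \sim 1$ on the relevant range, these neighbourhoods are curved tubes of thickness $\sim N_1^{-1}$ whose heights over any fixed base point are $\gtrsim N_1^{-1}$-separated from one another, so the family has $O(1)$ overlap. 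This is the bilinear incarnation of the Fefferman--Córdoba square function estimate \cite{Fefferman1973,Cordoba1982}, and it yields
\[
\big\| \mathcal{E} g_1 \, \mathcal{E} g_2 \, w_{B_{N_1}} \big\|_{L^2(\R^2)}^2 \lesssim \sum_\ell \big\| \mathcal{E} g_1 \, \mathcal{E} g_{2,\ell} \, w_{B_{N_1}} \big\|_{L^2(\R^2)}^2
\]
with constant independent of the regularization parameter of the continuous approximation. For each single mode, reversing the continuous approximation (so that $\mathcal{E} g_{2,\ell}$ becomes a plane wave with coefficient $b_\ell$ and $\mathcal{E} g_1$ becomes $\sum_\kappa a_\kappa e^{i(\kappa x - \kappa^2 t)}$) and using periodicity together with Plancherel over one spatial period gives $\| \mathcal{E} g_1 \, \mathcal{E} g_{2,\ell} \, w_{B_{N_1}} \|_{L^2}^2 \lesssim |b_\ell|^2 N_1^2 \sum_\kappa |a_\kappa|^2 \sim |b_\ell|^2 N_1^2$. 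Summing in $\ell$ with $\sum_\ell |b_\ell|^2 \sim 1$ gives the displayed bound, and undoing the rescaling completes the argument.

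The main obstacle is the orthogonality step: one must check that after the continuous approximation and after multiplying by $w_{B_{N_1}}$ — equivalently, after blurring the relevant Fourier supports at scale $N_1^{-1}$ — the pieces $\mathcal{E} g_1 \, \mathcal{E} g_{2,\ell}$ genuinely retain $O(1)$-overlapping Fourier supports, uniformly in the regularization parameter, which is exactly where the quantitative transversality $|\eta-\xi| \sim 1$ (forced by $N_2 \ll N_1$) is used. Note that the naive route — computing $\| \mathcal{E} g_1 \, \mathcal{E} g_2 \|_{L^2(\R^2)}^2$ by Plancherel and the substitution $(\xi,\eta) \mapsto (\xi+\eta,\xi^2+\eta^2)$, which gives $\lesssim \|g_1\|_{L^2}^2 \|g_2\|_{L^2}^2$ — does not survive the continuous approximation, since $\|g_i\|_{L^2}$ blows up in the regularization limit; it is precisely the reduction to single modes afforded by the transverse square function that is uniform. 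Everything else — tracking constants through the rescaling and the continuous approximation, and choosing the weight so that it dominates the box in question while keeping its Fourier support at a scale finer than $N_1^{-1}$ — is routine and parallels Subsections~\ref{subsection:InitialReductions}--\ref{subsection:Overview}.
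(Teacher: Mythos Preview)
Your proposal is correct and follows essentially the same route as the paper's Appendix: rescale to a box of side $N_1$, pass to the continuous approximation, use the Fefferman--C\'ordoba transverse orthogonality to reduce to single Fourier modes, and reverse the approximation on those modes. The only cosmetic difference is that the paper invokes the full bilinear square function estimate (Theorem~\ref{thm:CordobaFefferman}), decomposing \emph{both} $g_1$ and $g_2$ into $N_1^{-1}$-intervals, whereas you decompose only $g_2$ and finish with Plancherel in $x$ for $g_1$; the underlying orthogonality and the final count are identical.
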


By \eqref{eq:ShorttimeBilinearStrichartz} and Bernstein's inequality to conclude a gain:
\begin{equation*}
\begin{split}
&\quad \int_{[0,N^{-\alpha}]} \int_{\T} \big| \sum_{k \in \Z} a_{1,k} e^{i(kx - tk^2)} \big|^2 \big| \sum_{k \in \Z} a_{2,k} e^{i(kx-tk^2)} \big|^4 dx dt \\
 &\lesssim \| e^{it \Delta} f_1 e^{it \Delta} f_2 \|^2_{L^2([0,N^{-\alpha}],L^2(\T))} \| e^{it \Delta} f_2 \|_{L^\infty}^2 \\
&\lesssim N_1^{-\alpha} \| f_1 \|_2^2 \| f_2 \|_2^2 N_1^{\frac{\alpha}{2}} \| f_2 \|_2^2 \\
&\lesssim N_1^{-\frac{\alpha}{2}} \| f_1 \|_2^2 \| f_2 \|_2^4.
\end{split}
\end{equation*}
Unwinding the decoupling iteration, this will yield Theorem \ref{thm:ImprovedStrichartzQuantified}. In the next section we turn to the technical details.

\section{Proof of Theorem \ref{thm:ImprovedStrichartzQuantified}}
\label{section:ProofTheoremB}
Following the reductions in Section \ref{section:Preliminaries}, let $I_j \subseteq [0,1]$, $j=1,2$ be two intervals, which satisfy $\text{dist}(I_1,I_2) \gtrsim 1$, and $\tilde{\delta} = |I_i| \leq N^{-\frac{\beta}{1+\beta}}$. Let $g_i \in \mathcal{S}(\R^2)$ with $\text{supp}(\hat{g}_i) \subseteq \mathcal{N}_{N^{-2+\alpha}}(\{(\xi,-\xi^2) : \xi \in I_i \})$.
We analyze the following expression via \textit{asymmetric bilinear decoupling} (cf. \cite{GuoLiYungZorinKranich2021}):
\begin{equation}
\label{eq:AsymmetricBilinearDecoupling}
\int_{\R^2} |g_{I_1}|^2 |g_{I_2}|^4 dx.
\end{equation}
More specifically, we have $g_I = \mathcal{E} b_I \cdot w_{B_{N^{2-\alpha}}}$. This means that $g_I$ is given by the continuous approximation on the interval $I$ with Fourier support blurred on the scale $N^{-(2-\alpha)}$. We recall notations from \cite[Section~2]{GuoLiYungZorinKranich2021}.

Let $\Gamma(t) = (t,-t^2)$. For an interval $I \subseteq [0,1]$ with $|I| = \tilde{\delta}$, we let 
\begin{equation*}
\mathcal{U}_I = \{ (c_I,c_I^2) + a \cdot \Gamma'(c_I) + b \cdot \Gamma''(c_I) \, : \, |a| \leq \tilde{\delta}, \; |b| \leq \tilde{\delta}^2 \}.
\end{equation*}
This describes roughly the $\tilde{\delta}^2$-neighbourhood of the paraboloid in the interval $I$.
By $\mathcal{U}_I^o$ we denote the polar set of $\mathcal{U}_I$ centered at the origin:
\begin{equation*}
\mathcal{U}_I^o = \{ x \in \R^2 : | \langle x, \partial^i \Gamma(c_I) \rangle| \leq |I|^{-i}, \; 1 \leq i \leq 2 \}.
\end{equation*}
We let
\begin{equation*}
\phi_I(x) = |\mathcal{U}_I^o|^{-1} \inf \{ t \geq 1 \, : \, x/t \in \mathcal{U}_I^o \}^{-20}
\end{equation*}
such that $\phi_I \geq 0$ and $ \| \phi_I \|_{L^1} \lesssim 1$. We have the following consequence of the uncertainty relation:
\begin{lemma}[{\cite[Lemma~3.3]{GuoLiYungZorinKranich2021}}]
\label{lem:Uncertainty}
For $p \in [1,\infty)$ and $J \subseteq [0,1]$, we have
\begin{equation*}
|g_J|^p \lesssim_{p,C} |g_J|^p * \phi_J
\end{equation*}
for any $g_J$ with $\text{supp}(\hat{g}_J) \subseteq C \mathcal{U}_J$.
\end{lemma}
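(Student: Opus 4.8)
The plan is to exploit the fact that if $\operatorname{supp}(\hat g_J) \subseteq C\mathcal{U}_J$, then $g_J$ is essentially constant on dual tubes, i.e. on translates of $\mathcal{U}_J^o$. First I would reproduce $g_J$ via a Fourier multiplier adapted to $C\mathcal{U}_J$: pick $\eta \in \mathcal{S}(\R^2)$ with $\hat\eta \equiv 1$ on the unit version of the region and $\hat\eta$ supported on a slightly larger dilate, then affinely rescale to the geometry of $\mathcal{U}_J$. Writing $\eta_J$ for this rescaled bump, we have $g_J = g_J * \eta_J$ because $\hat\eta_J = 1$ on $\operatorname{supp}(\hat g_J)$. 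The key analytic point is that $\eta_J$ is an $L^1$-normalized bump adapted to the polar body: $|\eta_J(x)| \lesssim_N |\mathcal{U}_J^o|^{-1}\langle \|x\|_{\mathcal{U}_J^o}\rangle^{-N}$ for any $N$, where $\|x\|_{\mathcal{U}_J^o} = \inf\{t\ge 1 : x/t \in \mathcal{U}_J^o\}$ is (comparable to) the gauge of $\mathcal{U}_J^o$. Choosing $N$ large (say $N \ge 40$) makes $|\eta_J| \lesssim \phi_J^{1/2}$ up to harmless constants; more robustly, $|\eta_J| \lesssim \phi_J$ after adjusting the exponent $20$ versus the decay rate.

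Next I would upgrade the pointwise reproducing identity $g_J = g_J * \eta_J$ to the claimed pointwise domination of $|g_J|^p$. For $p=1$ this is immediate: $|g_J| = |g_J * \eta_J| \le |g_J| * |\eta_J| \lesssim |g_J| * \phi_J$. For $p>1$ one first applies Hölder in the convolution with respect to the probability-like measure $|\eta_J|/\|\eta_J\|_{L^1}\,dx$ (using $\|\eta_J\|_{L^1}\lesssim 1$), giving
\begin{equation*}
|g_J(x)| = \Big| \int g_J(x-y)\,\eta_J(y)\,dy \Big| \lesssim \Big( \int |g_J(x-y)|^p\, |\eta_J(y)|\, dy \Big)^{1/p},
\end{equation*}
so that $|g_J(x)|^p \lesssim (|g_J|^p * |\eta_J|)(x) \lesssim (|g_J|^p * \phi_J)(x)$. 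This is exactly the assertion, with the implicit constant depending on $p$ and on the dilation factor $C$ (which only enters through how much larger we must take the support of $\hat\eta_J$, hence through the constant in the bump estimate for $\eta_J$).

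The only slightly delicate point — and the step I expect to require the most care — is the affine-invariance bookkeeping: verifying that the Schwartz bump $\eta_J$ attached to the \emph{anisotropic} region $C\mathcal{U}_J$ really does decay like an $L^1$-normalized bump against the gauge of the polar set $\mathcal{U}_J^o$, uniformly in the interval $J$ and in the parameters $N$, $\tilde\delta$. This follows because $\mathcal{U}_J$ is (up to absolute constants) an affine image $T_J(B(0,1))$ of the unit ball, its polar $\mathcal{U}_J^o$ is the corresponding image of $B(0,1)$ under $(T_J^{-1})^{*}$, and $|\mathcal{U}_J||\mathcal{U}_J^o| \sim 1$, so the rescaled Schwartz function transforms correctly and the normalization $|\mathcal{U}_J^o|^{-1}$ in the definition of $\phi_J$ is precisely what is needed. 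Once this is in place, the estimate is, as stated, a direct consequence of the uncertainty principle, and no further input is required.
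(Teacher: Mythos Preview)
Your proposal is correct and follows the standard uncertainty-principle argument. Note that the paper does not supply its own proof of this lemma: it is quoted verbatim from \cite[Lemma~3.3]{GuoLiYungZorinKranich2021} and described only as ``a consequence of the uncertainty relation''. Your write-up is precisely that consequence spelled out --- reproduce $g_J$ by convolution with an $L^1$-normalized bump $\eta_J$ adapted to $\mathcal{U}_J^o$, apply Jensen/H\"older in the convolution to pass to $|g_J|^p$, and dominate $|\eta_J|\lesssim \phi_J$ via Schwartz decay and the duality $|\mathcal{U}_J|\,|\mathcal{U}_J^o|\sim 1$ --- so there is nothing to compare beyond noting that your argument matches the one in the cited reference.
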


\medskip

Coming back to \eqref{eq:AsymmetricBilinearDecoupling}, we have
\begin{equation*}
\int |g_{I_1}|^4 |g_{I_2}|^2 \lesssim \int (|g_{I_1}|^4 * \phi_{I_1}) (|g_{I_2}|^2 * \phi_{I_2}).
\end{equation*}
At this point we use the lower-dimensional decoupling (based on Plancherel and transversality of pieces of the paraboloid on the two separated intervals $I_1$, $I_2$). The following estimate is \cite[Lemma~3.8]{GuoLiYungZorinKranich2021} with $l =1$ and $k=2$:
\begin{equation*}
\int (|g_{I_1}|^4 * \phi_{I_1}) ( |g_{I_2}|^2 * \phi_{I_2} ) \lesssim \sum_{\substack{I_2^2 \subseteq I_2, \\ |I_2^2| = \tilde{\delta}^2}} \int \big( |g_{I_1}|^4 * \phi_{I_1} \big) \big( |g_{I_2^2}|^2 * \phi_{I_2^2} \big). 
\end{equation*}

Applying H\"older's inequality (see \cite[Lemma~4.1]{GuoLiYungZorinKranich2021}) yields
\begin{equation*}
\sum_{|I_2^2| = \tilde{\delta}^2} \int \big( |g_{I_1}|^4 * \phi_{I_1} \big) \big( |g_{I_2^2}|^2 * \phi_{I_2^2} \big)\lesssim \| g_{I_1} \|_{L^6}^3 \sum_{|I_2^2| = \tilde{\delta}^2} \big( \int ( |g_{I_1}|^2 * \phi_{I_1} )( |g_{I_2^2}|^4 * \phi_{I_2^2} ) \big)^{\frac{1}{2}}.
\end{equation*}
We summarize our findings as
\begin{equation*}
\int |g_{I_1}|^4 |g_{I_2}|^2 \leq C_1 \| g_{I_1} \|_6^3 \sum_{|I_2^2| = \tilde{\delta}^2} \big( \int \big( |g_{I_1}|^2 * \phi_{I_1} \big) \big( |g_{I_2^2}|^4 * \phi_{I_2^2} \big) \big)^{\frac{1}{2}},
\end{equation*}
which is amenable to $m$-fold iteration:
\begin{equation}
\label{eq:DecouplingIteration}
\begin{split}
&\quad \int |g_{I_1}|^4 |g_{I_2}|^2 \\
 &\leq C_1 \| g_{I_1} \|_6^3 \sum_{|I_2^2| = \tilde{\delta}^2} \big( \int \big( |g_{I_1}|^2 * \phi_{I_1} \big) \big( |g_{I^2_2}|^4 * \phi_{I_2} \big) \big)^{\frac{1}{2}} \\
&\leq C_1^{\frac{3}{2}} \| g_{I_1} \|^3_6 \sum_{|I_2^2| = \tilde{\delta}^2} \big( \sum_{|I_1^2| = \tilde{\delta}^4} \int \big( |g_{I_1^2}|^2 * \phi_{1^2} \big) \big( |g_{I_2^2}|^4 * \phi_{I_2^2} \big) \big)^{\frac{1}{2}} \\
&\vdots \\
&\leq C_1^2 \| g_{I_1} \|^3_{L^6} \sum_{|I_2^2| = \tilde{\delta}^2} \| g_{I_2^2} \|_{L^6}^{\frac{3}{2}} \\
&\quad \times \big( \sum_{|I_1^2| = \tilde{\delta}^4} \| g_{I_1^2} \|_{L^6}^{\frac{3}{2}} \big( \sum_{|I_2^3| = \tilde{\delta}^4} \| g_{I_2^3} \|_{L^6}^{3/2} \ldots \big( \int |g_{I_1^{m/2}}|^4 * \phi_{I_1^{m/2}} |g_{I_2^{m/2}}|^2 * \phi_{I_2^{m/2}} \big)^{\frac{1}{2}} \big)^{\frac{1}{2}} \ldots \big)^{\frac{1}{2}}.
\end{split}
\end{equation}
Recall that $m = \lceil \log_2(\beta^{-1} + 1) \rceil$ (which we suppose to be even for notational convenience) for $\tilde{\delta} = N^{-\frac{\beta}{1+\beta}}$ such that $|I_j^{m/2}| = N^{-1+\frac{\alpha}{2}}$. By the argument in Section \ref{subsection:Overview} we have the following:
\begin{lemma}
\label{lem:LinearExitLemma}
Let $g_{I_j^k} = \mathcal{E} b_{I_j^k} \cdot w_{B_{N^{2-\alpha}}}$ be like above. Then
\begin{equation*}
\| g_{I_j^k} \|_{L^6(\R^2)} \leq N_1^{\frac{4-\alpha}{6}} \cdot C_\varepsilon \log(N_1)^{2+\varepsilon} \| b_{I_j^k} \|_{\ell^2}.
\end{equation*}
\end{lemma}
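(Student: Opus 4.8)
\textbf{Proof proposal for Lemma \ref{lem:LinearExitLemma}.}
The plan is to trace the continuous approximation backwards, exactly as sketched in Section \ref{subsection:Overview}, but keeping careful track of every power of $N_1$ and every logarithm. Write $N = N_1$. By definition $g_{I_j^k} = \mathcal{E} b_{I_j^k} \cdot w_{B_{N^{2-\alpha}}}$, where the weight $w_{B_{N^{2-\alpha}}}$ is a Schwartz weight for the space-time ball $B_{N^{2-\alpha}}$, rapidly decaying off the ball and compactly Fourier supported. First I would use the rescaling relation between $L^6$-norms of extension operators at unit frequency and at frequency $N$: carrying out the change of variables $x \mapsto x/N$, $t \mapsto t/N^2$ (the inverse of the normalization performed in Subsection \ref{subsection:ContinuousApproximation1d}) turns $\mathcal{E}\tilde f_{I}$ at unit scale into an extension operator at frequency scale $N$, at the cost of a Jacobian factor; concretely
\begin{equation*}
\| \mathcal{E} b_{I_j^k} \cdot w_{B_{N^{2-\alpha}}} \|_{L^6_{t,x}(\R^2)}^6 \lesssim N^3 \, \| \mathcal{E} b_{I_j^k} \cdot w_{N^{-\alpha}}(t) w_{N^{1-\alpha}}(x) \|_{L^6_{t,x}(\R^2)}^6,
\end{equation*}
where the transported weight is now a Schwartz weight for the rectangle of dimensions $N^{1-\alpha} \times N^{-\alpha}$ in the $(x,t)$-variables. (The factor $N^3$ is $N^{(2-\alpha)\cdot 3} / N^{(1-\alpha)\cdot 3 + (-\alpha)\cdot 3}$ adjusted for the $L^6$ power; the bookkeeping is the routine part.)

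Next I would reverse the continuous approximation on this space-time estimate: letting the mollification parameter $\lambda \to \infty$, the Fourier extension operator $\mathcal{E}\tilde f_i$ converges (in $\mathcal{S}'$, and hence the space-time $L^6$-norms converge on the compactly supported weight) to the exponential sum $\sum_{k \in \Z} a_{i,k} e^{i(kx - tk^2)}$ with the original periodic coefficients $a_{i,k}$. This gives
\begin{equation*}
N^3 \, \| \mathcal{E} b_{I_j^k} \cdot w_{N^{-\alpha}}(t) w_{N^{1-\alpha}}(x) \|_{L^6_{t,x}(\R^2)}^6 \lesssim N^3 \, \big\| \sum_{k \in \Z} a_{I_j^k,k} e^{i(kx-tk^2)} \big\|_{L^6_t([0,N^{-\alpha}],\, L^6_x([0,N^{1-\alpha}]))}^6.
\end{equation*}
Then I would exploit $2\pi$-periodicity in $x$ of the exponential sum: the spatial interval $[0,N^{1-\alpha}]$ covers $\sim N^{1-\alpha}$ copies of $\T$ (up to the weight's rapid decay, which is harmless since it only fattens the region by $O(1)$ periods), so the $L^6_x([0,N^{1-\alpha}])^6$-integral equals $\sim N^{1-\alpha}$ times the $L^6_x(\T)^6$-integral, turning the prefactor into $N^{4-\alpha}$.

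Finally, with the $x$-domain reduced to $\T$, I would invoke the discrete Strichartz inequality of Theorem \ref{thm:DiscreteStrichartzL6}. Although that theorem is stated for the time interval $[0,1] = \T$ in $t$ (i.e. $L^6(\T^2)$), restriction to the shorter interval $[0,N^{-\alpha}] \subseteq [0,1]$ only decreases the norm, so
\begin{equation*}
N^{4-\alpha} \, \big\| \sum_{k} a_{I_j^k,k} e^{i(kx-tk^2)} \big\|_{L^6_t([0,N^{-\alpha}],L^6_x(\T))}^6 \leq N^{4-\alpha} \, C_\varepsilon \log(N)^{12 + 6\varepsilon} \, \| a_{I_j^k} \|_{\ell^2}^6 = N^{4-\alpha} \, C_\varepsilon \log(N)^{12+6\varepsilon} \, \| b_{I_j^k} \|_{\ell^2}^6,
\end{equation*}
the last equality because $b_{i,k} = a_{i,Nk}$ is just a reindexing that preserves the $\ell^2$-norm. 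Taking sixth roots and renaming $6\varepsilon \to \varepsilon$ yields the claimed bound $\| g_{I_j^k} \|_{L^6(\R^2)} \leq N_1^{(4-\alpha)/6} C_\varepsilon \log(N_1)^{2+\varepsilon} \| b_{I_j^k} \|_{\ell^2}$.

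The main obstacle I anticipate is making the reversal of the continuous approximation rigorous in the presence of the weight — specifically, checking that multiplication by $w_{B_{N^{2-\alpha}}}$ (equivalently, blurring the Fourier support at scale $N^{-(2-\alpha)}$) is compatible with passing to the $\lambda \to \infty$ limit and does not interfere with the periodization step, and confirming that the rapid (polynomial, degree $\sim 100$) off-ball decay of the weight makes the "extra periods" beyond $[0,N^{1-\alpha}]$ genuinely negligible rather than contributing a spurious $\log$ or $N^\varepsilon$. This is where one has to be slightly careful that the Fourier-support condition $\text{supp}(\hat g_{I_j^k}) \subseteq \mathcal{N}_{N^{-2+\alpha}}(\{(\xi,-\xi^2): \xi \in I_j^k\})$ and the uncertainty principle (Lemma \ref{lem:Uncertainty}) are applied consistently; everything else is bookkeeping of exponents.
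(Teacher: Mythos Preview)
Your proposal is correct and follows essentially the same argument as the paper: reverse the scaling to pick up the factor $N^3$, take $\lambda\to\infty$ to recover the exponential sum, use $2\pi$-periodicity in $x$ to convert $N^3$ into $N^{4-\alpha}$, and then apply Theorem~\ref{thm:DiscreteStrichartzL6} on $[0,N^{-\alpha}]\times\T$. The concern you raise about the weight in the reversal step is exactly the one the paper glosses over with the remark that ``since we are proving $L^2$-based estimates, it suffices to show an estimate for'' the sharply localized version; the rapid decay of $w$ makes the tail summable.
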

To estimate the maximally decoupled expression, we use the following:
\begin{lemma}
\label{lem:BilinearExitLemma}
Let $\tilde{I}_i \subseteq [0,1]$ with $|\tilde{I}_i| = N^{-1+\frac{\alpha}{2}}$ and assume the separation property
\begin{equation*}
\text{dist}(\tilde{I}_1,\tilde{I}_2) \sim 1.
\end{equation*}
Then the following estimate holds:
\begin{equation}
\label{eq:AsymmetricExitEstimate}
\int (|g_{\tilde{I}_1}|^2 * \phi_{\tilde{I}_1}) ( |g_{\tilde{I}_2}|^4 * \phi_{\tilde{I}_2} ) \leq N^{4-\alpha} \cdot D N^{-\frac{\alpha}{2}} \| b_{\tilde{I}_1} \|_{\ell^2}^2 \| b_{\tilde{I}_2} \|_{\ell^2}^4.
\end{equation}
\end{lemma}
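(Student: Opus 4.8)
The plan is to first dispose of the weights $\phi_{\tilde I_i}$ by the uncertainty principle, then reverse the continuous approximation to return to the periodic exponential sums, and finally apply the short-time bilinear Strichartz estimate \eqref{eq:ShorttimeBilinearStrichartz} together with Bernstein's inequality exactly as in the overview in Section \ref{subsection:Overview}. The first step is routine: since $\phi_{\tilde I_i} \geq 0$ has $\|\phi_{\tilde I_i}\|_{L^1} \lesssim 1$ and $g_{\tilde I_i}$ has Fourier support in $C\,\mathcal U_{\tilde I_i}$ (because $|\tilde I_i| = N^{-1+\alpha/2}$ means the $N^{-2+\alpha}$-blurring is at the natural scale $|\tilde I_i|^2$), Lemma \ref{lem:Uncertainty} and an application of Fubini/Young's inequality let us absorb the convolutions: $\int (|g_{\tilde I_1}|^2 * \phi_{\tilde I_1})(|g_{\tilde I_2}|^4 * \phi_{\tilde I_2}) \lesssim \int |g_{\tilde I_1}|^2 |g_{\tilde I_2}|^4 * \tilde\phi$ for a suitable $\tilde\phi$ with $\|\tilde\phi\|_{L^1}\lesssim 1$, and after one more convolution estimate we are reduced to bounding $\int_{\R^2} |g_{\tilde I_1}|^2 |g_{\tilde I_2}|^4\,dx$.

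Next I would reverse the continuous approximation. Writing $g_{\tilde I_i} = \mathcal E b_{\tilde I_i}\cdot w_{B_{N^{2-\alpha}}}$ and following the scaling computation in Section \ref{subsection:ContinuousApproximation1d} in reverse — undoing $x\mapsto x/N$, $t\mapsto N^{-2}t$, $k_i\mapsto Nk_i$ and passing $\lambda\to\infty$ so that $\mathcal E\tilde f_i$ becomes the exponential sum $\sum_k a_{i,k}e^{i(kx-tk^2)}$ — produces the factor $N^{4-\alpha}$ recorded in the statement and leaves
\begin{equation*}
\int_{\R^2} |g_{\tilde I_1}|^2|g_{\tilde I_2}|^4\,dx \lesssim N^{4-\alpha}\int_{[0,N^{-\alpha}]\times\T}\Big|\sum_{k} a_{1,k}e^{i(kx-tk^2)}\Big|^2\Big|\sum_k a_{2,k}e^{i(kx-tk^2)}\Big|^4\,dx\,dt,
\end{equation*}
where the frequency supports of $a_{1}$ and $a_2$ lie in intervals of length $\sim N^{\alpha/2}$ at distance $\sim N$, so in particular $a_1 = P_{N_1}f_1$-type and $a_2 = P_{N_2}f_2$-type with $N_2\ll N_1\sim N$.

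For the final step I apply Hölder in the integrand to split off two copies of the second factor, use the short-time bilinear Strichartz estimate \eqref{eq:ShorttimeBilinearStrichartz} on $\|e^{it\Delta}f_1\,e^{it\Delta}f_2\|_{L^2_{t,x}([0,N^{-\alpha}]\times\T)}^2 \lesssim N^{-\alpha}\|f_1\|_2^2\|f_2\|_2^2$ (note the time interval $[0,N^{-\alpha}]$ is shorter than $[0,N_1^{-1}]$ for $\alpha\le 1$, so the estimate applies after restriction), and control the remaining factor $\|e^{it\Delta}f_2\|_{L^\infty_{t,x}}^2$ by Bernstein's inequality by $\lesssim N^{\alpha/2}\|f_2\|_2^2$ using that the Fourier support has size $\sim N^{\alpha/2}$. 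Multiplying gives the gain $N^{-\alpha}\cdot N^{\alpha/2} = N^{-\alpha/2}$, i.e. the factor $D N^{-\alpha/2}$ with $D$ the harmless constant, and together with the $N^{4-\alpha}$ from the continuous approximation this is exactly \eqref{eq:AsymmetricExitEstimate}. The only point requiring care — the ``main obstacle'' — is bookkeeping the Fourier support sizes: after decoupling down to scale $N^{-1+\alpha/2}$ and undoing the rescaling by $N$, the effective frequency support of each $f_i$ has diameter $N\cdot N^{-1+\alpha/2} = N^{\alpha/2}$ rather than $N$, and it is precisely this reduced size that makes Bernstein cheap enough to leave a net gain; one must also check that the bilinear estimate \eqref{eq:ShorttimeBilinearStrichartz}, stated on $[0,N_1^{-1}]$, survives restriction to the shorter interval $[0,N^{-\alpha}]$, which is immediate since shrinking the time interval only decreases the left-hand side.
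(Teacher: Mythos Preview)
Your overall strategy---reverse the continuous approximation, then combine the short-time bilinear Strichartz estimate with Bernstein---is exactly the paper's, and the final numerology is correct. But two steps need repair.

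First, the reduction ``absorb the convolutions and bound $\int|g_{\tilde I_1}|^2|g_{\tilde I_2}|^4$'' is not justified. Lemma~\ref{lem:Uncertainty} goes the wrong way for this purpose (it says $|g_J|^p\lesssim|g_J|^p*\phi_J$, useful for \emph{inserting} convolutions, not removing them), and in general $\int(F*\phi_1)(G*\phi_2)\not\lesssim\int FG$ for nonnegative $F,G$: take $F,G$ concentrated near distinct points. The weights $\phi_{\tilde I_1},\phi_{\tilde I_2}$ are adapted to differently oriented parallelograms and cannot be merged into a single $\tilde\phi$. The paper instead first pulls two factors of $g_{\tilde I_2}$ out in $L^\infty$ (using $|g_{\tilde I_2}|^4*\phi_{\tilde I_2}\le\|g_{\tilde I_2}\|_{L^\infty}^2\,(|g_{\tilde I_2}|^2*\phi_{\tilde I_2})$), then unfolds both remaining convolutions via Fubini and proves a bound on the inner integral $\int|g_{\tilde I_1}(z-z_1)|^2|g_{\tilde I_2}(z-z_2)|^2\,dz$ that is \emph{uniform} in the shifts $z_1,z_2$. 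The key point is that translating $\mathcal E b$ amounts to multiplying the coefficients $b$ by a unimodular phase, which leaves $\|b\|_{\ell^2}$ and hence the bilinear Strichartz bound unchanged; the translated weights $w_{B_{N^{2-\alpha}}}(\cdot-z_i)$ are dominated by a fixed weight. Since $\|\phi_{\tilde I_i}\|_{L^1}\lesssim 1$, the outer integrals in $z_1,z_2$ then close harmlessly.

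Second, for $0<\alpha\le 1$ one has $N^{-\alpha}\ge N^{-1}$, so $[0,N^{-\alpha}]$ is \emph{longer} than $[0,N_1^{-1}]$, not shorter; you cannot simply restrict. The paper pieces together $\sim N^{1-\alpha}$ subintervals of length $N^{-1}$, each contributing $\lesssim N^{-1}\|f_1\|_2^2\|f_2\|_2^2$ by \eqref{eq:ShorttimeBilinearStrichartz}, and summing gives $N^{-\alpha}$. Your stated bound $\lesssim N^{-\alpha}$ is therefore correct, but the justification as written is backwards.
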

\begin{proof}
By H\"older's inequality it suffices to estimate
\begin{equation*}
\int \big( |g_{\tilde{I}_1}|^2 * \phi_{\tilde{I}_1} |g_{\tilde{I}_2}|^2 * \phi_{\tilde{I}_2} \big) \| g_{I_2} \|_{L^\infty}^2.
\end{equation*}
We rewrite the first factor as
\begin{equation*}
\iint \big( \int |g_{\tilde{I}_1}|^2 (x-x_1,t-t_1) |g_{\tilde{I}_2}|^2(x-x_2,t-t_2) dx dt \big) \phi_1(x_1,t_1) \phi_2(x_2,t_2) dx_1 dx_2 dt_1 dt_2.
\end{equation*}
We shall prove a uniform bound in $x_1$,$x_2$ and $t_1$,$t_2$ for the integral over $x$ and $t$. To this end, we absorb the pure phase factor $e^{i(x_1-x_2) \xi} e^{i(t_1-t_2) \xi^2}$ into $g_2$ and dominate
\begin{equation*}
w_{B_{N^{2-\alpha}}}(x',t') w_{B_{N^{2-\alpha}}}(x'+x_1-x_2,t'+t_1-t_2) \lesssim w_{B_{N^{2-\alpha}}}(x',t').
\end{equation*}
We reverse the continuous approximation to find
\begin{equation*}
\begin{split}
&\quad \int \big| \int e^{i(x' \xi - t' \xi^2)} f_1(\xi) d\xi \int e^{i(x' \xi - t \xi^2)} \tilde{f}_2(\xi) d\xi \big|^2 w_{B_{N^{2-\delta}}}(x',t') dx'dt' \\
 &\lesssim N^{4-\alpha} \| e^{it \Delta} f_{\tilde{I}_1} e^{it \Delta} f'_{\tilde{I}_2} \|^2_{L_{t,x}^2([0,N^{-\alpha}] \times \T)}.
 \end{split}
\end{equation*}
By $f'$ we indicate that we have absorbed the phase shift into $f$ and write $\tilde{I}_j = N I_j$.
Note that $\| f'_{\tilde{I}_2} \|_2 = \| f_{\tilde{I}_2} \|_2$ because the phase shift leaves the $\ell^2$-norm invariant.

We piece together \eqref{eq:ShorttimeBilinearStrichartz} on intervals of size $N^{-1}$ to obtain
\begin{equation}
\label{eq:ShorttimeExit}
\| e^{it \Delta} f_{\tilde{I}_1} e^{it \Delta} f_{\tilde{I}_2} \|^2_{L_{t,x}^2([0,N^{-\alpha}] \times \T)} \leq D_1 N^{-\alpha} \| a_{I_1} \|_{\ell^2}^2 \| a_{\tilde{I}_2} \|_{\ell^2}^2.
\end{equation}
We estimate $\| f_{\tilde{I}_2} \|_{L^\infty}$ by Bernstein's inequality:
\begin{equation}
\label{eq:BernsteinExit}
\| f_{\tilde{I}_2} \|_{L^\infty}^2 \leq D_2 N^{\frac{\alpha}{2}} \| a_{\tilde{I}_2} \|_{\ell^2}^2.
\end{equation}
Taking \eqref{eq:ShorttimeExit} and \eqref{eq:BernsteinExit} together with $\| \phi_I \|_{L^1} \lesssim 1$, we conclude the proof of \eqref{eq:AsymmetricBilinearDecoupling}.
\end{proof}

We are ready to estimate the expression in \eqref{eq:DecouplingIteration}: The expressions $\| g_{I_j^k} \|_{L^6}$ are estimated by Lemma \ref{lem:LinearExitLemma}, which yields a factor of $C_\varepsilon \log(N_1)^{2+\varepsilon}$ at every instance. Note that the exponents in \eqref{eq:DecouplingIteration} sum up to a number $\leq 6$ by finite geometric sum. The bilinear expression is estimated by Lemma \ref{lem:BilinearExitLemma}, which yields
\begin{equation*}
\begin{split}
\eqref{eq:DecouplingIteration} &\leq C_1^2 (C_\varepsilon \log(N_1)^{2+\varepsilon})^6 (D N_1^{-\frac{\alpha}{2}})^{\frac{1}{2^m}} \| b_{I_1} \|^3_{\ell^2} \sum_{|I_2^2| = \tilde{\delta}^2} \| b_{I_2^2} \|_{\ell^2}^{\frac{3}{2}} \\
&\quad \times \big( \sum_{|I_1^2| = \tilde{\delta}^4} \| b_{I_1^2} \|_{\ell^2}^{\frac{3}{2}} \big( \sum_{|I_2^3| = \tilde{\delta}^4} \| b_{I_2^3} \|_{\ell^2}^{3/2} \ldots \big( \| b_{I_1^{m/2}} \|_{\ell^2}^4 \| b_{I_2^{m/2}} \|_{\ell^2}^2 \big)^{\frac{1}{2}} \ldots \big)^{\frac{1}{2}} \\
&\leq C_1^2 (C_\varepsilon \log(N_1)^{2+\varepsilon})^6 (D N_1^{-\frac{\alpha}{2}})^{\frac{1}{2^m}} \| b_{I_1} \|^4_{\ell^2} \| b_{I_2} \|_{\ell^2}^2.
\end{split}
\end{equation*}
In the ultimate estimate we used the disjointness between the intervals to carry out the sums. Recalling that $2^m \leq 4 \beta^{-1}$ we find
\begin{equation*}
\eqref{eq:DecouplingIteration} \leq C_1^2 (C_\varepsilon \log(N_1)^{2+\varepsilon})^6 D^{\frac{1}{\beta}} N^{-\frac{\alpha \beta }{8}} \| b_{I_1} \|^4_{\ell^2} \| b_{I_2} \|_{\ell^2}^2.
\end{equation*}
We summarize our findings as
\begin{equation*}
\| e^{it \Delta} f_1 (e^{it \Delta} f_2)^2 \|^2_{L^2([0,N^{-\alpha}] \times \T)} \leq (C_\varepsilon \log(N_1)^{2+\varepsilon})^6 D_\beta N_1^{-\frac{\alpha \beta}{8}} \| f_1 \|_{L^2}^2 \| f_2 \|_{L^2}^4 .
\end{equation*}
Coming back to \eqref{eq:OrthogonalReduction}, we obtain from \eqref{eq:InitialReduction1d} (recall that we suppose $\| f_i \|_{L^2} = 1$) and the estimate in the previous display:
\begin{equation*}
\begin{split}
\big\| \prod_{j=1}^3 e^{it \Delta} f_j \big\|^2_{L^2([0,N^{-\alpha}] \times \T)} &\leq C_\varepsilon \log(N_1)^{12+\varepsilon} D_\beta N_1^{-\frac{\alpha \beta}{8}} \\
&\leq C_{\varepsilon,\beta} \log(N_1)^{12+\varepsilon} N_1^{-\frac{\alpha \beta}{8}}.
\end{split}
\end{equation*}
In the last step we redefined $\varepsilon$ and $C_{\varepsilon,\beta}$ accordingly. The proof is complete.

$\hfill \Box$

\section{Refinement for weakly separated dyadic frequencies}
\label{section:LogarithmicRefinement}

In this section we prove Theorem \ref{thm:LogarithmicRefinementTrilinearStrichartz1d} as a variant of the previous argument. We shall be brief.

\begin{proof}[Proof~of~Theorem~\ref{thm:LogarithmicRefinementTrilinearStrichartz1d}]
%
By the reductions of Section \ref{subsection:InitialReductions} it suffices to estimate
\begin{equation}
\label{eq:BilinearDecouplingRefined}
\log(N)^{6+\varepsilon} \big( \int_{[0,N^{-\alpha}] \times \T} |e^{it \Delta} f_1|^2 |e^{it \Delta} f_2|^4 \big)^{\frac{1}{2}},
\end{equation}
where $\text{supp}(\hat{f}_i) \subseteq I_i$ an interval of length $N_3$ and $\text{dist}(I_1,I_2) \gtrsim N_1 =: N$. We rescale $x \to N x$, $t \to N^2 t$ to find
\begin{equation*}
\begin{split}
\eqref{eq:BilinearDecouplingRefined} &= N^{-3} \int_{[0,N^{2-\alpha}] \times N \T}  |e^{it \Delta} \tilde{f}_1|^2 |e^{it \Delta} \tilde{f}_2|^4 \\
&\lesssim N^{-3} N^{-(1-\alpha)} \int_{[0,N^{2-\alpha}] \times [0,N^{2-\alpha}]} |e^{it \Delta} \tilde{f}_1 |^2 |e^{it \Delta} \tilde{f}_2|^4.
\end{split}
\end{equation*}
In the above display we used periodicity which incurs a factor $N^{1-\alpha}$. We use continuous approximation like above
\begin{equation*}
\begin{split}
&\lesssim N^{-3} N^{-(1-\alpha)} \int_{B_{N^{2-\alpha}}} |e^{it \Delta} \tilde{f}_1|^2 |e^{it \Delta} \tilde{f}_2|^4 dt dx \\
&\lesssim N^{-3} N^{-(1-\alpha)} \int_{B_{N^{2-\alpha}}} |\mathcal{E} \tilde{f}_1|^2 |\mathcal{E} \tilde{f}_2|^4 w_{B_{N^{2-\alpha}}}^6 dx dt.
\end{split}
\end{equation*}
Now we use the decoupling iteration to find that the integral is dominated by
\begin{equation}
\label{eq:EstimateBRefined}
\lesssim \log^{13}(N) \big( \int |\mathcal{E} f_{1, \theta}|^2 |\mathcal{E} f_{2, \theta}|^4 w^6_{B_{N^{2-\alpha}}} dx dt \big)^{1/2^m}
\end{equation}
with $f_{i, \theta}$ supported in intervals of size $N^{-\frac{2-\alpha}{2}}$. $m$ describes the number of decoupling iterations, which is estimated by $m \leq \log_2 (10 \log \log (N))$. Indeed, the initial size of the intervals is $\leq \exp(\frac{-\log N}{\log \log N})$ after rescaling. Then it follows
\begin{equation*}
\exp \big( \frac{- \log N}{\log \log N} \big)^{2^m} \leq N^{-1}.
\end{equation*}

Reversing the continuous approximation and the scaling we find
\begin{equation*}
\int |\mathcal{E} f_{1, \theta}|^2 |\mathcal{E} f_{2, \theta}|^4 w_{B_{N^{2-\alpha}}}^6 dx dt \lesssim \int_{[0,N^{-\alpha}] \times \T} |e^{it \Delta} f_{1, \tilde{\theta}}|^2 |e^{it \Delta} f_{2, \tilde{\theta}}|^4 dx dt
\end{equation*}
with $\text{supp}(\hat{f}_{i, \tilde{\theta}}) \subseteq I_{i, \tilde{\theta}}$, which are intervals of length $N^{\frac{\alpha}{2}}$. The expression is finally estimated by short-time bilinear Strichartz estimates and Bernstein's inequality: 
\begin{equation}
\label{eq:EstimateCRefined}
\begin{split}
\int_{[0,N^{-\alpha}] \times \T} |e^{it \Delta} f_{1, \tilde{\theta}}|^2 |e^{it \Delta} f_{2, \tilde{\theta}}|^4 dx dt &\lesssim \| e^{it \Delta} f_{2, \tilde{\theta}} \|^2_{L^\infty_{t,x}} \| e^{it \Delta} f_{1, \tilde{\theta}} e^{it \Delta} f_{2, \tilde{\theta}} \|^2_{L^2_{t,x}([0,N^{-\alpha}] \times \T)} \\
&\lesssim N^{\frac{\alpha}{2}} N^{-\alpha} = N^{-\frac{\alpha}{2}}.
\end{split}
\end{equation}

Collecting estimates \eqref{eq:EstimateBRefined} and \eqref{eq:EstimateCRefined}, we have
\begin{equation*}
\int_{[0,N^{-\alpha}] \times \T} \prod_{i=1}^3 |e^{it \Delta} f_i |^2 dx dt \lesssim \log(N)^{13} N^{-\frac{\alpha}{2^{m+2}}} = \log(N)^{20} N^{-\frac{\alpha}{40 \log \log N}}.
\end{equation*}
Since $\log(N)^{20} N^{-\frac{\alpha}{40 \log \log N}} \to 0$ as $N \to \infty$, the proof is complete.

\end{proof}

\begin{remark}
We emphasize that it is not the separation property of the frequencies which is required for the above argument to prove a trilinear Strichartz estimate without derivative loss. This can always be recovered via a Whitney decomposition and affine rescaling (cf. \cite[Section~2]{GuoLiYungZorinKranich2021}). What is required is that the initial frequency intervals after rescaling are of size $O(\exp \big( \frac{-\log N}{\log \log N} \big))$. This keeps the number of decoupling iterations small enough for a favorable estimate.
\end{remark}

\section{Improved trilinear Strichartz estimates via transversality and decoupling iteration}

\label{section:TrilinearStrichartz2d}

\subsection{Set-up}

In the following we prove a trilinear analog in two dimensions. 
We estimate the expression
\begin{equation*}
\int_{[0,N_1^{-\alpha}] \times \T^2_\gamma} |e^{it \Delta} f_1 e^{it \Delta} f_2 e^{it \Delta} f_3 |^{\frac{4}{3}} dx dt
\end{equation*}
under support and transversality assumptions as stated in Theorem \ref{thm:TrilinearStrichartzEstimates2d}.

We recall the following quantitative trilinear Kakeya inequality due to Guth \cite{Guth2010} (see also \cite{CarberyValdimarsson2013}), which plays a crucial role in the argument:
\begin{theorem}
Let $(T_j^i)_j$ for $i=1,2,3$ denote three collections of tubes of size $\delta^{-1} \times \delta^{-1} \times \delta^{-2}$ such that for the long directions $\mathfrak{n}^i_{j_i}$ for tubes $T_{j_i}^i$ we have the uniform transversality condition:
\begin{equation*}
|\mathfrak{n}^1_{j_1} \wedge \mathfrak{n}^2_{j_2} \wedge \mathfrak{n}^3_{j_3}| \gtrsim \nu.
\end{equation*}
Let $(c_j^i)_{1 \leq j \leq M_i} \subseteq \R_{>0}$ with $i=1,2,3$ denote three collections of positive real numbers. Then the following holds:
\begin{equation}
\label{eq:QuantitativeTrilinearKakeya}
\int \big( \prod_{i=1}^3 \sum_{j=1}^{M_i} c_j^{(i)} \chi_{T_j^i} \big)^{\frac{1}{2}} \leq C \nu^{-\frac{1}{2}} \delta^{-3} \big( \prod_{i=1}^3 \sum_{j=1}^{M_i} c_j^{(i)} \big)^{\frac{1}{2}}.
\end{equation}
\end{theorem}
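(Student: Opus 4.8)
The plan is to treat \eqref{eq:QuantitativeTrilinearKakeya} as the endpoint ($n=3$) case of the multilinear Kakeya inequality and to organize the argument in two layers: an elementary Loomis--Whitney core that already produces the precise dependence $\nu^{-1/2}\delta^{-3}$, and a topological input that promotes it to genuinely spread tube directions. For the core, recall the Loomis--Whitney inequality: if $P_i:\R^3\to\R^2$ drops the $i$-th coordinate, then $\int_{\R^3}\prod_{i=1}^3(h_i\circ P_i)^{1/2}\le\prod_{i=1}^3\|h_i\|_{L^1(\R^2)}^{1/2}$, which follows from two applications of Cauchy--Schwarz together with Fubini. More generally, for surjective linear maps $\ell_i:\R^3\to\R^2$ with one-dimensional kernels $\langle\mathfrak{n}_i\rangle$ ($\mathfrak{n}_i$ unit vectors), substituting $x=Au$ with $A=[\mathfrak{n}_1\,|\,\mathfrak{n}_2\,|\,\mathfrak{n}_3]$ turns $\ell_i\circ A$ into $L_i\circ P_i$ with $L_i\in GL_2(\R)$; splitting $\mathfrak{n}_j,\mathfrak{n}_k$ into their components along and transverse to $\mathfrak{n}_i$ gives the area identity $|\det L_i|=|\mathfrak{n}_1\wedge\mathfrak{n}_2\wedge\mathfrak{n}_3|=|\det A|$ for every $i$, and hence
\[
\int_{\R^3}\prod_{i=1}^3(h_i\circ\ell_i)^{1/2}\,dx\le |\mathfrak{n}_1\wedge\mathfrak{n}_2\wedge\mathfrak{n}_3|^{-1/2}\prod_{i=1}^3\|h_i\|_{L^1(\R^2)}^{1/2}.
\]

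Next I would dispose of the model case in which, within each family $i$, all tube directions $\mathfrak{n}^i_j$ lie within $O(\delta)$ of a single line $\langle\mathfrak{n}_i^*\rangle$. Choosing $\ell_i$ with $\ker\ell_i=\langle\mathfrak{n}_i^*\rangle$, such a tube $T^i_j$ has orthogonal shadow $\ell_i(T^i_j)$ of area $\lesssim\delta^{-2}$, so putting $h_i=\sum_j c^{(i)}_j\chi_{\ell_i(T^i_j)}$ yields the pointwise bound $\sum_j c^{(i)}_j\chi_{T^i_j}(x)\le h_i(\ell_i(x))$ with $\|h_i\|_{L^1}\lesssim\delta^{-2}\sum_j c^{(i)}_j$. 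Feeding this into the displayed inequality and using $|\mathfrak{n}_1^*\wedge\mathfrak{n}_2^*\wedge\mathfrak{n}_3^*|\gtrsim\nu$ (a consequence of the transversality hypothesis) reproduces exactly the claimed bound $C\nu^{-1/2}\delta^{-3}\prod_i(\sum_j c^{(i)}_j)^{1/2}$.

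The substantive case is when the directions inside a family fan out over a cap much larger than $\delta$ --- which is the situation relevant to our decoupling application --- so that no single triple of projections controls all the shadows and the previous paragraph fails. Here I would invoke the endpoint multilinear Kakeya theorem of Guth \cite{Guth2010} (alternatively the more self-contained proof of Carbery--Valdimarsson \cite{CarberyValdimarsson2013}). The mechanism, specialized to our setting, is: discretize $\R^3$ into unit lattice cubes $Q$, replace each tube by the union of the cubes it meets, and reduce \eqref{eq:QuantitativeTrilinearKakeya} to a \emph{pointwise} Loomis--Whitney estimate in which the triple of projection directions is allowed to vary with $Q$ --- for each $Q$ one selects one tube from each family passing through $Q$, and those three directions are $\nu$-transverse by hypothesis. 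Assembling these cube-by-cube estimates into one global inequality requires distributing, at each cube, the accumulated mass among the three families in a way compatible with the moving projections; Carbery--Valdimarsson carry this out with a Borsuk--Ulam/degree-theoretic selection, and Guth's original argument uses the polynomial ham-sandwich theorem. Either route delivers the clean constant with no $\delta^{-\varepsilon}$ loss.

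Accordingly, the main obstacle is precisely the global assembly in the last step: the Loomis--Whitney computation only works with a fixed triple of transverse planes, whereas the hypothesis supplies transversality only pointwise, so one must convert a family of pointwise Loomis--Whitney inequalities with moving planes into a single estimate. This is exactly where elementary harmonic analysis is insufficient and a topological (or algebraic) ingredient becomes unavoidable; everything else --- the change of variables, the shadow-area bound, and the superadditivity $(\sum_a x_a)^{1/2}\le\sum_a x_a^{1/2}$ used to organize the sums --- is routine.
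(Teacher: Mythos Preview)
The paper does not prove this theorem; it merely recalls it as a known result due to Guth \cite{Guth2010} (with the alternative argument of Carbery--Valdimarsson \cite{CarberyValdimarsson2013} also cited) and then applies it in the ball-inflation step. So there is no ``paper's own proof'' to compare against.

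Your proposal is a faithful sketch of how those cited references actually establish the inequality. The affine Loomis--Whitney computation you give is correct and already produces the sharp $\nu^{-1/2}\delta^{-3}$ dependence in the model case of nearly-parallel families; your identification of the remaining difficulty --- converting pointwise transversality into a global estimate when directions within a family spread --- is exactly the obstruction that forces the topological input (polynomial ham-sandwich in Guth, Borsuk--Ulam in Carbery--Valdimarsson). One small caveat: your description of the Carbery--Valdimarsson mechanism as ``cube-by-cube Loomis--Whitney with moving projections'' is a reasonable heuristic, but their actual argument works somewhat differently, via a tensor-power trick and a directed-volume identity rather than a literal spatial decomposition; the Borsuk--Ulam step enters to balance masses across families rather than to select projections at each cube. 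This does not affect the correctness of your overall outline, only the accuracy of the one-sentence summary.
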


\subsection{Rescaling and continuous approximation}

We return to the expression \eqref{eq:MultilinearExpression2d}. The rescaling and approximation argument is analogous to Subsection \ref{subsection:ContinuousApproximation1d}. We shall be brief.
For the decoupling iteration we rescale space and time variables $x \to N_1 x$ and $t \to N_1^2 t$ and let $b_{i, N k} = \hat{f}_i(k)$ to find:
\begin{equation}
\label{eq:Rescaling2d}
\begin{split}
&\quad \int_{[0,N_1^{-\alpha}] \times \T_{\gamma}^2} \big| e^{it \Delta} f_1 e^{it \Delta} f_2 e^{it \Delta} f_3 \big|^{p/3} dx dt \\
&= N_1^{-4} \int_{[0,N_1^{2-\alpha}] \times N_1 \T_{\gamma}} \big| \prod_{i=1}^3 \sum_{\substack{k \in \Z^2 / N_1, \\ N_1 k \in \text{supp}(\hat{f}_i)}} e^{i(k \cdot x - t k^2)} b_{i,k} \big|^{p/3} dx dt.
\end{split}
\end{equation}

Let $\mathcal{E} \tilde{f}_i (x,t) = \int e^{i(x_1 \xi_1 + x_2 \xi_2 - t|\xi|^2)} \tilde{f}_i(\xi) d\xi$. We use periodicity in space and continuous approximation to obtain:
\begin{equation*}
\eqref{eq:Rescaling2d} \lesssim N_1^{-4} N_1^{-2(1-\alpha)} \int_{B_{N_1^{2-\alpha}}} \big| \mathcal{E} \tilde{f}_1 \mathcal{E} \tilde{f}_2 \mathcal{E} \tilde{f}_3 \big|^{p/3} dx dt.
\end{equation*}
Since this will simplify the argument, we consider the averaged expression for a decoupling iteration:
\begin{equation}
\label{eq:AveragedExpressionReductions}
\dashint_{B_{N_1^{2-\alpha}}} |\mathcal{E} \tilde{f}_1 \mathcal{E} \tilde{f}_2 \mathcal{E} \tilde{f}_3 \big|^{p/3} dx dt.
\end{equation}

We introduce the space-time weight $w_{B_{N_1^{2-\alpha}}}^{p/3}$ into the integral. This will be absorbed into the functions $\mathcal{E} \tilde{f}_i$ to find functions $\tilde{F}_i$, which are Fourier supported in the $N_1^{-(2-\alpha)}$ neighbourhood of the truncated paraboloid $\mathbb{P}^2 = \{ (\xi,-\xi^2) \, : \xi \in [-2,2]^2 \}$, which is denoted by $\mathcal{N}_{N_1^{-(2-\alpha)}} \mathbb{P}^2$. Moreover, the Fourier support of $\tilde{F}_i$ is restricted to space-frequency balls of size $N_3/N_1 \leq N_1^{-\frac{\beta}{1+\beta}}$. We have
\begin{equation}
\label{eq:AveragedExpressionReductionsII}
\eqref{eq:AveragedExpressionReductions} \lesssim \dashint \big| \tilde{F}_1 \tilde{F}_2 \tilde{F}_3 \big|^{p/3} dx dt.
\end{equation}



\subsection{The decoupling iteration}
\label{subsection:DecouplingIteration2d}
The qualitative argument follows \cite[Section~10.3]{Demeter2020}. The quantification rests upon two ingredients:
\begin{itemize}
\item the endpoint multilinear Kakeya inequality to make the dependence on the transversality precise,
\item keeping track of the number of decoupling iterations to obtain the precise gain from short-time bilinear Strichartz estimates.
\end{itemize}

\medskip
We introduce notations from \cite{Demeter2020}. 
Let $Q^r$ denote a cube with side length $2^r$ and $\mathbb{I}_{m}$ denote the collection of cubes tiling $[-2,2]^2$ with side length $2^{-m}$. For $I_{i,m} \in \mathbb{I}_{m}$ let $\mathcal{P}_{I_{i,m}}$ denote the Fourier projection to $I_{i,m} \times \R$. Let $L^t_{\#}(w_{Q^r})$ denote the averaged Lebesgue norm with a rapidly decreasing weight off $Q^r$:
\begin{equation*}
\| F \|^t_{L^t_{\#}(w_{Q^r})} = \frac{1}{|Q^r|} \int |F|^t w_{Q^r} dz.
\end{equation*}
With this notation, we have
\begin{equation*}
\text{rhs} \eqref{eq:AveragedExpressionReductionsII} \sim \| \tilde{F}_1 \tilde{F}_2 \tilde{F}_3\|^{p/3}_{L^{p/3}_{\#}(w_{Q_{N_1^{2-\alpha}}})}.
\end{equation*}

Let $\mathcal{Q}_{m,r}(Q^r)$ denote the partition of $Q^r$ with cubes of side-length $2^m$ and $m \leq r$.

\medskip

Let $\underline{F} = (F_i)_{i=1,2,3}$ be a family of functions with $\text{supp}(\hat{F}_i) \subseteq \mathcal{N}_{\delta^2}(\mathbb{P}^2)$ with $\delta \leq 2^{-m}$ and such that
\begin{equation}
\label{eq:TransversalityDecouplingIteration}
|(2\xi_1,1) \wedge (2\xi_2,1) \wedge (2\xi_3,1) | \geq \nu
\end{equation}
for $\xi_i \in \pi_{\R^2}(\text{supp}(\hat{F}_i))$.

In the present context we have
\begin{equation}
\label{eq:DeltaDecoupling}
\delta = N_1^{-(1-\frac{\alpha}{2})}.
\end{equation}

 We define for $\Delta_m \in \mathcal{Q}_{m,r}(Q^r)$
\begin{equation}
\label{eq:DQuantity}
D_t(m,\Delta_m,\underline{F},\delta) = \big[ \prod_{i=1}^3 \big( \sum_{I_{i,m} \in \mathbb{I}_{m}} \| \mathcal{P}_{I_{i,m}} F_i \|^2_{L^t_{\#}(w_{\Delta_m})} \big)^{\frac{1}{2}} \big]^{\frac{1}{3}},
\end{equation}
and
\begin{equation}
\label{eq:AQuantity}
A_p(m,Q^r,\underline{F},\delta) = \big( \frac{1}{|\mathcal{Q}_{m,r}|} \sum_{\Delta_m \in \mathcal{Q}_{m,r}(Q^r)} D_2(m,\Delta_m,\underline{F},\delta)^p \big)^{\frac{1}{p}}.
\end{equation}

As mentioned on \cite[p.~209]{Demeter2020}, using the locally constant property we can perceive $A_p$ as
\begin{equation*}
|Q^r| A_p^p(m,Q^r,\underline{F},\delta) \approx \int_{\R^3} \prod_{i=1}^3 \big( \sum_{I \in \mathbb{I}_m} |\mathcal{P}_I F_i|^2 \big)^{\frac{p}{3}} w_{Q^r}.
\end{equation*}
This representation suggests the possibility of exiting multilinearity via linear and bilinear Strichartz estimates.

\medskip

Moreover, we have the following representation for $D_p$
\begin{equation*}
\prod_{i=1}^3 \big( \sum_{I_{i,m} \in \mathbb{I}_m} \| \mathcal{P}_{I_{i,m}} F_i \|^2_{L^p_{\#}(w_{Q^r}} \big)^{\frac{1}{6}} = |Q^r|^{-\frac{1}{p}} \prod_{i=1}^3 \big( \sum_{I_{i,m}} \| \mathcal{P}_{I_{i,m}} F_i \|^2_{L^p(w_{Q^r})} \big)^{\frac{1}{6}}.
\end{equation*}

\medskip

Coming back to \eqref{eq:AveragedExpressionReductionsII}. We let
\begin{equation*}
\delta= N_1^{-(1-\frac{\alpha}{2})} \text{ and } 2^r = N_1^{2-\alpha}.
\end{equation*}
For notational convenience assume $r \in \N$ and $\delta^{-1} \in 2^{\N}$.

We find by H\"older's inequality
\begin{equation}
\label{eq:StartDecouplingIteration}
\dashint_{B_{N_1^{2-\alpha}}} \big| \tilde{F}_1 \tilde{F}_2 \tilde{F}_3 \big|^{4/3} \leq \frac{1}{|\mathcal{Q}_{m,r}|} \sum_{\Delta_m \in \mathcal{Q}_{m,r}} \| \tilde{F}_1 \|^{4/3}_{L^2_{\#}(w_{\Delta})} \| \tilde{F}_2 \|^{4/3}_{L^2_{\#}(w_\Delta)} \| \tilde{F}_3 \|^{4/3}_{L^2_{\#}(w_{\Delta})}.
\end{equation}

The key step is to lower the size of the Fourier support of $\tilde{F}_i$ in \eqref{eq:StartDecouplingIteration}, which is carried out in two steps. In the first step we show a ball inflation estimate, which is based on the trilinear Kakeya inequality. In the second step we use local $L^2$-orthogonality after suitable application of H\"older's inequality.

\begin{proposition}[Ball~inflation]
\label{prop:BallInflation}
Let $Q$ be a $\kappa^{-2}$-square, which is tiled with a collection $\mathcal{Q}$ of squares $\Delta$ with side length $\kappa^{-1}$ and $\text{supp}(\hat{F}_i) \subseteq \mathcal{N}_{\kappa^2}(\mathbb{P}^2)$, which satisfy \eqref{eq:TransversalityDecouplingIteration}.

For $\tilde{p} \geq 2$, the following estimate holds:
\begin{equation}
\label{eq:BallInflation} 
\begin{split}
&\frac{1}{|\mathcal{Q}|} \sum_{\Delta \in \mathcal{Q}} \prod_{i=1}^3 \big( \sum_{I_{i,m} \in \mathbb{I}_{m}(U_i)} \| \mathcal{P}_{I_{i,m}} F_i \|^2_{L^{\tilde{p}}_{\#}(w_{\Delta})} \big)^{\tilde{p}/4} \\
&\qquad \lesssim \nu^{-\frac{1}{2}} \log(\kappa^{-1})^c \prod_{i=1}^3 \big( \sum_{I_{i,m} \in \mathbb{I}_m(U_i)} \| \mathcal{P}_{I_{i,m}} F_i \|^2_{L^{\tilde{p}}_{\#}(w_Q)} \big)^{\tilde{p}/4}.
\end{split}
\end{equation}
\end{proposition}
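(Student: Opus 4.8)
\textbf{Proof proposal for Proposition \ref{prop:BallInflation}.}

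The plan is to follow the ball inflation scheme of Bourgain--Demeter (in the presentation of \cite[Chapter~10]{Demeter2020}), but with the trilinear Kakeya inequality of Guth used at the endpoint so that the transversality constant $\nu$ appears explicitly as $\nu^{-1/2}$. First I would reduce to the case $\tilde p = 2$ by interpolation: for general $\tilde p \geq 2$ one interpolates the $\tilde p = 2$ case against the trivial $L^\infty$-type bound, exactly as in \cite[Section~10.3]{Demeter2020}, so that only the $\tilde p = 2$ instance of \eqref{eq:BallInflation} needs a genuine argument. (If one prefers to keep $\tilde p$ general throughout, the same steps go through with $\tilde p/4$ exponents in place of $1/2$, at the cost of bookkeeping; I would do the $\tilde p = 2$ reduction.) With $\tilde p = 2$ fixed, write $G_i = \sum_{I_{i,m} \in \mathbb{I}_m(U_i)} |\mathcal{P}_{I_{i,m}} F_i|^2$; then the left-hand side is $\tfrac{1}{|\mathcal{Q}|}\sum_{\Delta} \prod_{i=1}^3 \big( \fint_\Delta G_i w_\Delta \big)^{1/2}$ and the goal is to bound it by $\nu^{-1/2}\log(\kappa^{-1})^c \prod_{i=1}^3 \big( \fint_Q G_i w_Q\big)^{1/2}$.

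The core steps are: (1) \emph{locally constant / wave packet decomposition.} Each piece $\mathcal{P}_{I_{i,m}} F_i$ has space-time Fourier support in a $\kappa^2$-slab over the cap $I_{i,m}$ of the paraboloid, hence on the $\kappa^{-2}$-ball $Q$ its modulus is, up to rapidly decaying tails, constant on translates of the dual plank $T_{I_{i,m}}$ — a tube of dimensions $\kappa^{-1}\times\kappa^{-1}\times\kappa^{-2}$ pointing in the normal direction $\mathfrak n_{i,j_i} = (2\xi_{I_{i,m}},1)/|(2\xi_{I_{i,m}},1)|$. So I replace $|\mathcal{P}_{I_{i,m}}F_i|^2$ on $Q$ by $\sum_{T \parallel I_{i,m}} c_T \chi_T$ (with $c_T$ an average of $|\mathcal{P}_{I_{i,m}}F_i|^2$ over $T$), absorbing the weights $w_\Delta, w_Q$ into the usual Schwartz tails. (2) \emph{Apply trilinear Kakeya.} On $Q$, $\prod_{i=1}^3 G_i^{1/2} \approx \prod_{i=1}^3 \big(\sum_{T^i} c_{T^i}\chi_{T^i}\big)^{1/2}$, and the three families of tube directions satisfy the uniform transversality $|\mathfrak n^1 \wedge \mathfrak n^2 \wedge \mathfrak n^3|\gtrsim\nu$ by \eqref{eq:TransversalityDecouplingIteration} (after the normalization $\xi \mapsto \xi$; the area-of-triangle condition is exactly the wedge condition used). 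Guth's inequality \eqref{eq:QuantitativeTrilinearKakeya} with $\delta^{-1}=\kappa^{-1}$ gives $\int_Q \prod_i \big(\sum_{T^i} c_{T^i}\chi_{T^i}\big)^{1/2}\lesssim \nu^{-1/2}\kappa^{-3}\prod_i\big(\sum_{T^i}c_{T^i}\big)^{1/2}$. (3) \emph{Reinterpret the right side.} Each $\sum_{T^i}c_{T^i}$ is comparable to $\kappa^{3}\fint_Q G_i\,w_Q$ — the factor $\kappa^3 = \kappa^{-2}\cdot\kappa^{-2}\cdot\kappa^{-2}/\kappa^{3}\cdot\ldots$ comes from $|T^i|/|Q|$ times the count — so $\nu^{-1/2}\kappa^{-3}\prod_i(\sum_{T^i}c_{T^i})^{1/2} \lesssim \nu^{-1/2}\kappa^{-3}\kappa^{9/2}\prod_i(\fint_Q G_i w_Q)^{1/2}$, and dividing by $|\mathcal{Q}| \cdot |\Delta|/|Q|$-type normalizations reproduces the claimed bound. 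The logarithmic loss $\log(\kappa^{-1})^c$ is the standard price for passing between the sharp averaged weight $w_Q$ and the tube model (summing Schwartz tails of the many scales of planks, and a pigeonholing over dyadic ranges of the $c_T$); I would invoke it exactly as in Demeter's text rather than re-derive it.

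The main obstacle I expect is step (3): making the change of normalization between the averaged local norms $L^{\tilde p}_\#(w_\Delta)$, $L^{\tilde p}_\#(w_Q)$ and the bare tube-counting quantities $\sum_T c_T$ bookkept \emph{cleanly}, so that all powers of $\kappa$, $|\mathcal{Q}|=|Q|/|\Delta|$, and the number of tubes per cap cancel to leave precisely $\nu^{-1/2}\log(\kappa^{-1})^c$ with no stray power of $\kappa$. This is purely combinatorial/dimensional but is where Bourgain--Demeter-type arguments are easiest to slip; I would double-check it by tracking a single monochromatic example ($c_T$ constant, one cap per $U_i$) where both sides can be computed explicitly. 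A secondary subtlety is that the transversality hypothesis \eqref{eq:TransversalityDecouplingIteration} is imposed for \emph{all} $\xi_i$ in the supports, which are single $\kappa$-caps here, so the directions of all relevant tubes in family $i$ are within $O(\kappa)$ of a common normal and the lower bound $\nu$ survives the discretization — I would remark on this but it needs no real work. Everything else (the locally constant lemma, absorbing weights, Schwartz tails) is routine given Lemma \ref{lem:Uncertainty}-type input and the cited references.
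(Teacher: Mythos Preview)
Your core steps (1)--(3) for the case $\tilde p = 2$ are essentially the paper's argument, but your handling of general $\tilde p > 2$ has a genuine gap. The proposed ``interpolation against the trivial $L^\infty$-type bound'' to reduce to $\tilde p = 2$ is not in \cite[Section~10.3]{Demeter2020} and does not work: the inequality \eqref{eq:BallInflation} is not linear in the $F_i$, the putative $L^\infty$ endpoint is not even well-formulated (the outer exponent $\tilde p/4$ blows up), and there is no log-convexity structure to exploit here. Your fallback --- ``the same steps go through with $\tilde p/4$ exponents in place of $1/2$'' --- also fails, because the trilinear Kakeya inequality \eqref{eq:QuantitativeTrilinearKakeya} in $\R^3$ holds at exponent $1/2$ and no higher; there is no version with $\prod_i(\cdot)^{\tilde p/4}$ on the left for $\tilde p > 2$.

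What the paper actually does (following Demeter) is stay at general $\tilde p$ and convert the exponent by hand. First one pigeonholes so that for each $i$ the norms $\| \mathcal{P}_{I} F_i \|_{L^{\tilde p}_\#(w_\Delta)}$ over the surviving caps $I$ are dyadically comparable --- this is the true source of the $\log(\kappa^{-1})^c$ loss, not Schwartz tails. Under that assumption, H\"older's inequality in the cap index gives
\[
\Big( \sum_{I} \| \mathcal{P}_{I} F_i \|^2_{L^{\tilde p}_\#(w_\Delta)} \Big)^{\tilde p/4}
\;\lesssim\;
M_i^{(\frac{1}{2}-\frac{1}{\tilde p})\frac{\tilde p}{2}}
\Big( \sum_{I} \| \mathcal{P}_{I} F_i \|^{\tilde p}_{L^{\tilde p}_\#(w_\Delta)} \Big)^{1/2},
\]
which puts the product into the form $\prod_i g_i^{1/2}$ required by Kakeya. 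After applying \eqref{eq:QuantitativeTrilinearKakeya} and \eqref{eq:ReverseTubeApproximation}, the comparability from the pigeonholing lets you undo the H\"older step and recover the $\ell^2$ sum on the right. Once you insert this H\"older/pigeonholing manoeuvre in place of your interpolation, the rest of your outline (locally constant replacement by tubes, Kakeya, reversing the normalizations) matches the paper's proof.
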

For the qualitative argument we refer to \cite[Section~10.3]{Demeter2020}. By using the quantitative multilinear Kakeya estimate, we can obtain the dependence of decoupling on the transversality.

\begin{proof}

We use the locally constant properties for each square $I$ of size $\kappa$ to bring the multilinear Kakeya inequality into play. For $I \in \mathbb{I}_m(U_i)$ the Fourier support of $\mathcal{P}_I F_i$ is inside a rectangle $\mathfrak{r}_I$ of size comparable to $\kappa \times \kappa \times \kappa^2$. The long sides are given by $I$. We tile the cube $4Q$ with translates of the dual rectangle $T_I = \mathfrak{r}_I^o$, which family is denoted by $\mathcal{T}_I$. We define
\begin{equation*}
g_I^{(i)}(x) = \sup_{y \in 2 T_I(x)} \| \mathcal{P}_I F_i \|^{\tilde{p}}_{L^{\tilde{p}}_{\#}(w_{Q(y,\kappa^{-1})})},
\end{equation*}
which are constant on translates of $T_I$. It follows for $x \in \Delta$ that
\begin{equation}
\label{eq:TubeApproximation}
\| \mathcal{P}_I F_i \|^{\tilde{p}}_{L^{\tilde{p}}_{\#}(w_{\Delta})} \leq g_I^{(i)}(x)
\end{equation}
and (see \cite[Eq.~(11.15),~p.~251]{Demeter2020})
\begin{equation}
\label{eq:ReverseTubeApproximation}
\dashint_{4Q} g_I^{(j)}(x) dx \lesssim \| \mathcal{P}_I F_j \|^{\tilde{p}}_{L^{\tilde{p}}_{\#}(4Q)}.
\end{equation}
Moreover, by the definition of $g_I^{(i)}$ it follows
\begin{equation*}
g_I^{(i)}(x) = \sum_{T \in \mathcal{T}_I} c_T 1_T.
\end{equation*}
We let $g_i(x) = \sum_{I \in \mathbb{I}_m(U_i)} g_I^{(i)}(x)$.

By dyadic pigeonholing we can suppose that $\| \mathcal{P}_{I} F_i \|_{L^{\tilde{p}}_{\#}(w_{\Delta})}$ have comparable size. This incurs a factor $\log(\kappa^{-1})^c$. We assume that we have $M_i$ squares $J_i$ with $\| \mathcal{P}_{J_i} F_i \|_{L^{\tilde{p}}_{\#}(w_Q)}$ of comparable size. By H\"older's inequality we find
\begin{equation*}
\begin{split}
\text{lhs} \eqref{eq:BallInflation} &\lesssim \log(\kappa^{-1})^c \big( \prod_{i=1}^3 M_i^{\frac{1}{2}-\frac{1}{\tilde{p}}} \big)^{\frac{\tilde{p}}{2}} \frac{1}{|\mathcal{Q}|} \sum_{\Delta \in \mathcal{Q}} \prod_{i=1}^3 \big( \sum_{I \in \mathbb{I}_m(U_i)} \| \mathcal{P}_I F_i \|^{\tilde{p}}_{L^{\tilde{p}}_{\#}(w_{\Delta})} \big)^{\frac{1}{2}} \\
&\lesssim \log(\kappa^{-1})^c \big( \prod_{i=1}^3 M_i^{\frac{1}{2}-\frac{1}{\tilde{p}}} \big)^{\frac{\tilde{p}}{2}} \dashint_Q \big( \prod_{i=1}^3 g_i \big)^{\frac{1}{2}}.
\end{split}
\end{equation*}


The transversality assumption \eqref{eq:TransversalityDecouplingIteration} yields the quantitative transversality for the families of tubes.
An application of the endpoint trilinear Kakeya estimate \eqref{eq:QuantitativeTrilinearKakeya} gives
\begin{equation*}
\int \big( \prod_{i=1}^3 g_i \big)^{\frac{1}{3}} \lesssim \nu^{-\frac{1}{2}} \delta^{-3} \big( \prod_{j=1}^3 \sum_{T \in \mathcal{T}_j} c_T \int \chi_T / (\delta^{-4}) \big)^{\frac{1}{2}}.
\end{equation*}
Hence, by \eqref{eq:ReverseTubeApproximation}
\begin{equation*}
\frac{1}{|Q|} \int \big( \prod_{i=1}^3 g_i \big)^{\frac{1}{2}} \lesssim \big( \prod_{j=1}^3 \frac{1}{|Q|} \int \sum_{T \in \mathcal{T}_j} c_T \chi_T \big)^{\frac{1}{2}} \lesssim \big( \prod_{j=1}^3 \sum_{J_j} \| \mathcal{P}_{J_j} F_j \|^{\tilde{p}}_{L^{\tilde{p}}_{\#}(w_Q)} \big)^{\frac{1}{2}}.
\end{equation*}

Since we assume that $\| \mathcal{P}_I F_i \|_{L^{\tilde{p}}_{\#}(w_\Delta)}$ have comparable size, it follows
\begin{equation*}
\big( \prod_{i=1}^3 M_i^{\frac{1}{2}-\frac{1}{\tilde{p}}} \big)^{\frac{\tilde{p}}{2}} \big( \prod_{j=1}^3 \sum_{J_i} \| \mathcal{P}_{J_i} F_j \|^{\tilde{p}}_{L^{\tilde{p}}_{\#}(w_Q)} \big)^{\frac{1}{2}} \lesssim \big[ \prod_{j=1}^3 \big( \sum_{J_i} \| \mathcal{P}_{J_i} F_j \|^2_{L^{\tilde{p}}_{\#}(w_Q)} \big)^{\frac{1}{2}} \big]^{\frac{\tilde{p}}{2}}.
\end{equation*}
The proof is complete.

\end{proof}

By an application of H\"older's inequality and relating $p$ and $\tilde{p}$ we find the following (see \cite[Proposition~10.23]{Demeter2020}):
\begin{proposition}[Multiscale~inequality]
\label{prop:MultiscaleInequality}
Let $3 \leq p \leq 4$ and
\begin{equation*}
\frac{3}{2p} = \frac{\kappa_p}{2} + \frac{1- \kappa_p}{p}.
\end{equation*}
Then there is $c>0$ such that the following holds:
\begin{equation}
\label{eq:Multiscale3d}
A_p(m,Q^{2m},\underline{F},\delta) \lesssim \nu^{-\frac{1}{2p}} \log(\delta^{-1})^c A_p(2m,Q^{2m},\underline{F},\delta)^{\kappa_p} D_p(m,Q^{2m},\underline{F},\delta)^{1-\kappa_p}.
\end{equation}
\end{proposition}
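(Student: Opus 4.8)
The plan is to derive the multiscale inequality \eqref{eq:Multiscale3d} by feeding the ball inflation estimate of Proposition \ref{prop:BallInflation} into H\"older's inequality and the elementary log-convexity of averaged Lebesgue norms, while tracking all exponents. Set the auxiliary exponent $\tilde p = \tfrac{2p}{3}$; since $3 \le p \le 4$ we have $\tilde p \in [2,\tfrac83] \subseteq [2,p]$, so Proposition \ref{prop:BallInflation} applies with this $\tilde p$, and the relation defining $\kappa_p$ rewrites as $\frac1{\tilde p} = \frac{\kappa_p}{2} + \frac{1-\kappa_p}{p}$, i.e.\ $L^{\tilde p}$ lies on the interpolation segment from $L^2$ (weight $\kappa_p$) to $L^p$ (weight $1-\kappa_p$). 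I also record at the outset that in the present application $\delta = N_1^{-(1-\alpha/2)} \le 2^{-m}$ (the iteration is run only for $2m \le r = \log_2 N_1^{2-\alpha}$, i.e.\ $2^m \le \delta^{-1}$, with equality at the last step), so $\operatorname{supp}(\hat{\tilde F}_i) \subseteq \mathcal{N}_{\delta^2}(\mathbb{P}^2) \subseteq \mathcal{N}_{2^{-2m}}(\mathbb{P}^2)$ and $\log(2^m)^c \lesssim \log(\delta^{-1})^c$; this is what legitimizes invoking Proposition \ref{prop:BallInflation} with $\kappa = 2^{-m}$ on $Q=Q^{2m}$ with its $2^m$-subcubes.

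First I would expand, straight from \eqref{eq:AQuantity},
\[
A_p(m,Q^{2m},\underline F,\delta)^p = \frac{1}{|\mathcal{Q}_{m,2m}|} \sum_{\Delta_m \in \mathcal{Q}_{m,2m}(Q^{2m})} D_2(m,\Delta_m,\underline F,\delta)^p,
\]
and bound each $D_2(m,\Delta_m,\cdot)$ by $D_{\tilde p}(m,\Delta_m,\cdot)$ using monotonicity of averaged norms $\|\cdot\|_{L^2_\#(w_{\Delta_m})} \le \|\cdot\|_{L^{\tilde p}_\#(w_{\Delta_m})}$, which passes termwise through the $\ell^2$-sums in \eqref{eq:DQuantity}. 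A bookkeeping of powers (using $p/6 = \tilde p/4$) gives
\[
D_{\tilde p}(m,\Delta_m,\underline F,\delta)^p = \prod_{i=1}^3 \Big( \sum_{I_{i,m} \in \mathbb{I}_m} \| \mathcal{P}_{I_{i,m}} \tilde F_i \|^2_{L^{\tilde p}_\#(w_{\Delta_m})} \Big)^{\tilde p/4},
\]
so the average over $\Delta_m$ is exactly the left-hand side of \eqref{eq:BallInflation} with $\kappa = 2^{-m}$. Applying Proposition \ref{prop:BallInflation} then yields
\[
A_p(m,Q^{2m},\underline F,\delta)^p \lesssim \nu^{-\frac12} \log(\delta^{-1})^c \, D_{\tilde p}(m,Q^{2m},\underline F,\delta)^p.
\]

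Next I would interpolate $D_{\tilde p}$ back down to $D_2$ and $D_p$ at the same spatial scale. For each frequency cube $I$ and each factor, log-convexity of averaged $L^q$-norms gives $\|\mathcal{P}_I \tilde F_i\|_{L^{\tilde p}_\#(w_{Q^{2m}})} \le \|\mathcal{P}_I \tilde F_i\|^{\kappa_p}_{L^2_\#(w_{Q^{2m}})} \|\mathcal{P}_I \tilde F_i\|^{1-\kappa_p}_{L^p_\#(w_{Q^{2m}})}$; squaring, summing over $I$, and applying H\"older with the conjugate pair $(\kappa_p^{-1},(1-\kappa_p)^{-1})$ gives $\big(\sum_I \|\mathcal{P}_I \tilde F_i\|^2_{L^{\tilde p}_\#}\big)^{1/2} \le \big(\sum_I \|\mathcal{P}_I \tilde F_i\|^2_{L^2_\#}\big)^{\kappa_p/2} \big(\sum_I \|\mathcal{P}_I \tilde F_i\|^2_{L^p_\#}\big)^{(1-\kappa_p)/2}$; taking the product over $i=1,2,3$ and the $\tfrac13$-power,
\[
D_{\tilde p}(m,Q^{2m},\underline F,\delta) \le D_2(m,Q^{2m},\underline F,\delta)^{\kappa_p} \, D_p(m,Q^{2m},\underline F,\delta)^{1-\kappa_p}.
\]
By Plancherel (almost orthogonality of the smooth frequency cutoffs), $D_2(m,Q^{2m},\cdot)$ is independent of the frequency scale up to a harmless constant, so $D_2(m,Q^{2m},\underline F,\delta) \lesssim D_2(2m,Q^{2m},\underline F,\delta) = A_p(2m,Q^{2m},\underline F,\delta)$, the last equality because $\mathcal{Q}_{2m,2m}(Q^{2m})$ is the single cube $Q^{2m}$. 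Substituting into the displayed consequence of ball inflation and taking $p$-th roots produces $A_p(m,Q^{2m},\underline F,\delta) \lesssim \nu^{-1/(2p)} \log(\delta^{-1})^c A_p(2m,Q^{2m},\underline F,\delta)^{\kappa_p} D_p(m,Q^{2m},\underline F,\delta)^{1-\kappa_p}$, which is \eqref{eq:Multiscale3d}.

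The only substantive input is Proposition \ref{prop:BallInflation} (and behind it the endpoint trilinear Kakeya inequality \eqref{eq:QuantitativeTrilinearKakeya}), which is where the $\nu^{-1/2}$ transversality dependence is born; everything else is exponent arithmetic. The point that needs care is precisely that ball inflation is used at the auxiliary exponent $\tilde p = 2p/3 < p$, not at $p$ itself: this forces the crude nesting $D_2 \le D_{\tilde p}$ \emph{before} inflation and the interpolation $D_{\tilde p} \le D_2^{\kappa_p} D_p^{1-\kappa_p}$ \emph{after}, and one must verify that the interpolation exponent forced by $\frac1{\tilde p} = \frac{\kappa_p}{2} + \frac{1-\kappa_p}{p}$ is exactly the $\kappa_p$ of the statement. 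A secondary point, easy but not to be skipped, is checking the hypothesis $\operatorname{supp}(\hat{\tilde F}_i) \subseteq \mathcal{N}_{(2^{-m})^2}(\mathbb{P}^2)$ of Proposition \ref{prop:BallInflation}, which holds because the iteration is terminated once $2^{-m}$ reaches the resolution scale $\delta = N_1^{-(1-\alpha/2)}$.
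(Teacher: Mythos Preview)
Your proof is correct and follows essentially the same route as the paper: set $\tilde p = 2p/3$, use H\"older (monotonicity of averaged norms) to pass from $L^2_\#$ to $L^{\tilde p}_\#$, apply ball inflation on $Q^{2m}$, interpolate $L^{\tilde p}_\#$ between $L^2_\#$ and $L^p_\#$, and then invoke local $L^2$-orthogonality to refine the frequency scale from $m$ to $2m$. Your write-up is in fact slightly more explicit than the paper's in two places---checking the hypothesis $\delta \le 2^{-m}$ needed for Proposition~\ref{prop:BallInflation} and spelling out the identification $D_2(2m,Q^{2m},\cdot) = A_p(2m,Q^{2m},\cdot)$---but the underlying argument is the same.
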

\begin{proof}
For $p \in [3,4]$ we have that $\tilde{p} = \frac{2p}{3} \geq 2$. This allows us to use H\"older's inequality:
\begin{equation*}
\frac{1}{|\mathcal{Q}|} \sum_{\Delta \in \mathcal{Q}} \prod_{i=1}^3 \big( \sum_{I \in \mathbb{I}_m(U_i)} \| \mathcal{P}_I F_i \|^2_{L^2_{\#}(w_\Delta)} \big)^{\frac{p}{6}} \lesssim \frac{1}{|\mathcal{Q}|} \sum_{\Delta \in \mathcal{Q}} \prod_{i=1}^3 \big( \sum_{I \in \mathbb{I}_m(U_1)} \| \mathcal{P}_I F_i \|^2_{L^{\tilde{p}}_{\#}(w_\Delta)} \big)^{\frac{\tilde{p}}{4}}.
\end{equation*}
Now we can apply Proposition \ref{prop:BallInflation} to find
\begin{equation*}
\frac{1}{|\mathcal{Q}|} \sum_{\Delta \in \mathcal{Q}} \prod_{i=1}^3 \big( \sum_{I \in \mathbb{I}_m(U_i)} \| \mathcal{P}_I F_i \|^2_{L^2_{\#}(w_\Delta)} \big)^{\frac{p}{6}} \lesssim \nu^{-\frac{1}{2}} \log(\delta^{-1})^c
\prod_{i=1}^3 \big( \sum_{I \in \mathbb{I}_m(U_i)} \| \mathcal{P}_I F_i \|^2_{L^{\frac{2p}{3}}_{\#}(w_Q)} \big)^{\frac{p}{6}}.
\end{equation*}
An application of H\"older's inequality gives
\begin{equation*}
\begin{split}
\prod_{i=1}^3 \big( \sum_{I \in \mathbb{I}_m(U_i)} \| \mathcal{P}_I F_i \|^2_{L^{\frac{2p}{3}}_{\#}(w_Q)} \big)^{\frac{p}{6}} &\lesssim \prod_{i=1}^3 \big( \sum_{I \in \mathbb{I}_m(U_i)} \| \mathcal{P}_I F_i \|^2_{L^{2}_{\#}(w_Q)} \big)^{\frac{\kappa_p p}{6}} \\
&\qquad \times \prod_{i=1}^3 \big( \sum_{I \in \mathbb{I}_m(U_i)} \| \mathcal{P}_I F_i \|^2_{L^{p}_{\#}(w_Q)} \big)^{\frac{(1-\kappa_p) p}{6}}.
\end{split}
\end{equation*}

Finally, we can use local $L^2$-orthogonality (see \cite[Proposition~10.20]{Demeter2020}) to find
\begin{equation*}
\sum_{I \in \mathbb{I}_m(U_i)} \| \mathcal{P}_I F_i \|^2_{L^{2}_{\#}(w_Q)} \lesssim \sum_{I \in \mathbb{I}_{2m}(U_i)} \| \mathcal{P}_I F_i \|^2_{L^2_{\#}(w_Q)}.
\end{equation*}
The proof is complete.
\end{proof}

We obtain the following from iterating the multiscale inequality: Let 
\begin{equation}
\label{eq:NotationsDecouplingIteration}
2^{-\underline{m}} = \frac{N_3}{N_1} \leq N_1^{-\frac{\beta}{1+\beta}} \text{ and } 2^{\overline{m}} = N_1^{2-\alpha} = \delta^{-2}.
\end{equation}
 Then by \eqref{eq:StartDecouplingIteration} and Proposition \ref{prop:MultiscaleInequality},
\begin{equation}
\label{eq:MultiscaleExpressionI}
\begin{split}
\frac{1}{|Q^{\overline{m}}|} \int |F_1 F_2 F_3 |^{p/3} dx dt &\lesssim A_p^p(\underline{m},Q^{\overline{m}},\underline{F},\delta) \\
&\lesssim \nu^{-\frac{1}{2}} \log(N_1)^c A_p^p (2 \underline{m}, Q^{\overline{m}}, \underline{F}, \delta)^{\kappa_p} D_p^p(\underline{m},Q^{\overline{m}},\underline{F},\delta)^{1-\kappa_p}.
\end{split}
\end{equation}
Iterating the multiscale inequality $n$ times, we find
\begin{equation}
\label{eq:MultiscaleExpressionII}
\begin{split}
A_p^p(\underline{m},Q^{\overline{m}},\underline{F},\delta) &\lesssim \nu^{-\frac{1}{2}} \log(N_1)^c \big( \prod_{\ell=1}^{n-1} \nu^{-\frac{\kappa_p^\ell}{2}} \log(N_1)^{c \kappa_p^{\ell}} \big) \\
&\quad \times A_p^p(2^n \underline{m}, Q^{\overline{m}}, \underline{F},\delta)^{\kappa_p^n} \prod_{\ell=1}^{n} D_p^p(2^{\ell -1} \underline{m}, Q^{\overline{m}}, \underline{F},\delta)^{\kappa_p^{\ell - 1} (1-\kappa_p)}.
\end{split}
\end{equation}
The number of iterations of the multiscale inequality is at most
\begin{equation}
\label{eq:MaxIterations}
n^* = \lceil \log_2(\beta^{-1} + 1) \rceil
\end{equation}
because
\begin{equation*}
N_1^{-\frac{\beta}{1+\beta} 2^{n^*}} \leq N_1^{-1}.
\end{equation*}

\subsection{Exiting the decoupling iteration with linear and bilinear Strichartz estimates}

We exit the decoupling iteration similar to the one-dimensional case by abandoning all multilinearity and estimate the resulting expressions via linear and bilinear Strichartz estimates. 

\subsubsection{Linear and bilinear Strichartz estimates}

We recall the following linear Strichartz estimates due to Bourgain--Demeter \cite{BourgainDemeter2015} combined with Galilean invariance.
\begin{theorem}[{\cite{BourgainDemeter2015}}]
\label{thm:LinearStrichartz}
Let $S \subseteq B(\xi^*,M) \subseteq \R^2$ and $f_S = \sum_{k \in S} e^{i kx} a_k: \T_\gamma^2 \to \C$. Then the following estimate holds:
\begin{equation*}
\| e^{it \Delta} f_S \|_{L_t^4([0,1],L_x^4(\T_\gamma^2))} \lesssim_\varepsilon M^\varepsilon \| a_S \|_{\ell^2}.
\end{equation*}
\end{theorem}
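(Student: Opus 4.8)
The plan is to deduce this from the Bourgain--Demeter $\ell^2$-decoupling inequality \eqref{eq:l2DecouplingIntroduction} (in the case $d=2$, $p=4=\tfrac{2(d+2)}{d}$, which is the critical exponent, so the inequality is available) by the transference to the periodic setting already carried out in Subsection~\ref{subsection:ContinuousApproximation1d}, after first moving the frequency support to a ball centred at the origin by Galilean invariance. We may assume throughout that $M$ is large; for bounded $M$ the set $S$ has $O(1)$ elements and the estimate is immediate from the triangle inequality and Cauchy--Schwarz.

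\emph{Recentring.} Since $f_S$ is a function on $\T^2_\gamma$, its Fourier support lies in the dual lattice $\Z\times\gamma^{-1}\Z$, whose covolume is $\sim 1$ because $\gamma\in(\tfrac12,1]$; pick a lattice vector $v$ with $|v-\xi^*|\lesssim 1$. Galilean invariance: conjugating $e^{it\Delta}f_S$ by $e^{i(x\cdot v-t|v|^2)}$ together with the spatial translation $x\mapsto x-2tv$ produces $e^{it\Delta}g$ for a function $g$ on $\T^2_\gamma$ with $\mathrm{supp}(\widehat g)=S-v\subseteq B(0,2M)$; the conjugation is legitimate on $\T^2_\gamma$ because $v$ lies in the dual lattice, and it preserves $\|\cdot\|_{L^4_t([0,1],L^4_x(\T^2_\gamma))}$ since for each fixed $t$ it only multiplies by a unimodular factor and translates the torus. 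Hence it suffices to treat $S\subseteq B(0,2M)$.

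\emph{Decoupling transference.} With $S\subseteq B(0,2M)$, rescale $x\to x/M$, $t\to t/M^2$ as in Subsection~\ref{subsection:ContinuousApproximation1d}; after using spatial periodicity the space-time domain sits inside a ball $B_{R}\subseteq\R^3$ with $R=CM^2$ (using $M\ll M^2$), and passing to a Fourier extension operator by continuous approximation brings \eqref{eq:l2DecouplingIntroduction} into play with caps $\theta$ of side $R^{-1/2}\sim M^{-1}$. Since this is comparable to the spacing of the rescaled frequencies $k/M$, each cap carries $O(1)$ frequencies, so $\|\mathcal{E}f_\theta\|_{L^4(w_{B_R})}$ is controlled by $|a_k|$ times the volume normalisation shared by all caps; summing the squares over $\theta$ yields $\|a_S\|_{\ell^2}$ (times that normalisation). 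Reversing the continuous approximation and the rescaling, the volume and weight factors cancel exactly as the factors $N^{-(4-\alpha)}$ and $N^{4-\alpha}$ cancel in Subsection~\ref{subsection:ContinuousApproximation1d}, leaving the decoupling constant $R^\varepsilon=M^{2\varepsilon}$, which is $M^\varepsilon$ after relabelling $\varepsilon$.

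\emph{Main difficulty.} There is no essential obstacle here: this is the standard deduction of a periodic Strichartz estimate from $\ell^2$-decoupling. The only points demanding attention are the compatibility of the Galilean recentring with the period lattice of $\T^2_\gamma$ — which forces us to recentre at the nearest dual-lattice point rather than exactly at $\xi^*$, the residual shift of size $O(1)$ being absorbed into the radius — and the bookkeeping showing that the volume and weight factors produced by the continuous approximation cancel, so that no power of $M$ larger than $M^\varepsilon$ survives.
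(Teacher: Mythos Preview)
Your proposal is correct and follows precisely the route the paper indicates: the paper does not actually prove Theorem~\ref{thm:LinearStrichartz} but only cites it as ``the following linear Strichartz estimates due to Bourgain--Demeter \cite{BourgainDemeter2015} combined with Galilean invariance,'' and your argument supplies exactly those two ingredients --- Galilean recentring to a ball at the origin (with the correct care that the shift vector must lie in the dual lattice of $\T^2_\gamma$) followed by the standard decoupling transference from Subsection~\ref{subsection:ContinuousApproximation1d}. The bookkeeping you sketch is accurate and the factors do cancel as claimed.
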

The estimate with logarithmically sharpened constant by lattice point counting 
\begin{equation*}
\| e^{it \Delta} f_S \|_{L_{t,x}^4([0,1] \times \T^2)} \lesssim \exp \big( \frac{c \log M}{\log \log M} \big) \| f_S \|_{L^2}
\end{equation*}
is due to Bourgain \cite{Bourgain1993A}. Very recently, Herr--Kwak \cite{HerrKwak2023} reported the estimate with sharp constant $\log(M)^{\frac{1}{4}}$ for $M \geq 2$.

\medskip

The bilinear Strichartz estimates due to Hani \cite{Hani2012} in two dimensions are given as follows:
\begin{theorem}
\label{thm:BilinearStrichartz2d}
Let $N_2 \ll N_1$ and $S \subseteq B(\xi^*,M) \subseteq B(0,N_2)$. Then it holds
\begin{equation*}
\| e^{it \Delta} P_{N_1} f_1 e^{it \Delta} P_{N_2} P_S f_2 \|_{L^2_{t,x}([0,N_1^{-1}] \times \T_\gamma^2)} \lesssim \big( \frac{M}{N_1} \big)^{\frac{1}{2}} \| P_{N_1} f_1 \|_{L^2} \| P_{N_2} f_2 \|_{L^2}.
\end{equation*}
\end{theorem}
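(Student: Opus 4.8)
The plan is to reduce the estimate to a lattice--point count that exploits the frequency gap $N_2\ll N_1$: in the resonance function of the product $uv$ there is a ``long direction'' of length $\sim N_1$, which thins out the relevant level sets. First I would pass to Fourier series on $\T_\gamma^2$. Write $u=e^{it\Delta}P_{N_1}f_1=\sum_{k}a_k e^{i(k\cdot x-t|k|^2)}$ with $a_k=\widehat{P_{N_1}f_1}(k)$ supported on the annulus $|k|\in[N_1/2,2N_1]$, and $v=e^{it\Delta}P_{N_2}P_Sf_2=\sum_{\ell\in S}b_\ell e^{i(\ell\cdot x-t|\ell|^2)}$ with $b$ supported on $S\subseteq B(\xi^*,M)\subseteq B(0,N_2)$. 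Dominating $\mathbf{1}_{[0,N_1^{-1}]}(t)\le\psi(t):=\chi(N_1t)$ with $\chi\ge 0$ smooth, compactly supported and $\ge 1$ on $[0,1]$, so that $\widehat\psi(\tau)=N_1^{-1}\widehat\chi(\tau/N_1)$ decays rapidly past the scale $N_1$, and expanding $|uv|^2$, integrating in $x$ (which enforces $k+\ell=k'+\ell'$ by orthogonality on $\T_\gamma^2$) and in $t$ against $\psi$, I obtain
\[
\|uv\|_{L^2([0,N_1^{-1}]\times\T_\gamma^2)}^2\ \lesssim\ \sum_{\substack{k+\ell=k'+\ell'\\ k,k'\in\text{supp}\,a,\ \ell,\ell'\in S}}|a_k|\,|b_\ell|\,|a_{k'}|\,|b_{\ell'}|\,\bigl|\widehat\psi(\Phi)\bigr|,\qquad \Phi=|k|^2+|\ell|^2-|k'|^2-|\ell'|^2 .
\]

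The algebraic heart of the matter is the identity, valid on $k+\ell=k'+\ell'$,
\[
\Phi=2(\ell-\ell')\cdot(\ell'-k)=2(\ell-\ell')\cdot(\ell-k') .
\]
Since $|\ell|,|\ell'|\le N_2\ll N_1\sim|k|\sim|k'|$, both $|\ell'-k|$ and $|\ell-k'|$ are $\sim N_1$. Hence, for fixed $k$ and $\ell'$, the condition $|\Phi|\le\Lambda$ forces $\ell$ into a slab of width $\lesssim\Lambda/N_1$ around $\ell'$ perpendicular to $\ell'-k$; intersecting with $S\subseteq B(\xi^*,M)$ gives at most $\lesssim M(1+\Lambda/N_1)$ admissible $\ell\in S$ (here one uses $M\gtrsim1$, which is harmless since $S$ is a nonempty subset of the dual lattice). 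Combining this with $|\widehat\psi(\Phi)|\lesssim N_1^{-1}(1+|\Phi|/N_1)^{-100}$ and summing over dyadic $\Lambda$ yields $\sum_{\ell\in S}|\widehat\psi(\Phi)|\lesssim N_1^{-1}M$ uniformly in $k,\ell'$, and symmetrically $\sum_{\ell'\in S}|\widehat\psi(\Phi)|\lesssim N_1^{-1}M$ uniformly in $k',\ell$.

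I would conclude by Cauchy--Schwarz on the quadruple sum, pairing $(|a_k|\,|b_{\ell'}|)$ with $(|a_{k'}|\,|b_\ell|)$: the first factor is $\bigl(\sum_{k\in\text{supp}\,a,\ \ell'\in S}|a_k|^2|b_{\ell'}|^2\sum_{\ell\in S}|\widehat\psi(\Phi)|\bigr)^{1/2}\lesssim(N_1^{-1}M)^{1/2}\|a\|_{\ell^2}\|b\|_{\ell^2}$, and the second factor is controlled the same way using the second uniform bound. This gives $\|uv\|_{L^2}^2\lesssim(M/N_1)\|P_{N_1}f_1\|_{L^2}^2\|P_{N_2}P_Sf_2\|_{L^2}^2$, and the claim follows after a square root together with $\|P_{N_2}P_Sf_2\|_{L^2}\le\|P_{N_2}f_2\|_{L^2}$.

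The main obstacle is to make the count linear in $M$ rather than quadratic: a Cauchy--Schwarz applied to the raw quadruple sum without using the structure of $\Phi$ loses a full factor of $M$ (it only sees $\#S\lesssim M^2$), and it is exactly the representation $\Phi=2(\ell-\ell')\cdot(\ell'-k)$ --- which isolates the long factor $\ell'-k$ of length $\sim N_1$ --- that makes the confining slab thin enough to recover $\#\{\ell\}\lesssim M$. A minor technical point is that the dual lattice of $\T_\gamma^2$ is $\Z\times\gamma^{-1}\Z$ rather than $\Z^2$, but for $\gamma\in(\tfrac12,1]$ this affects the slab-versus-ball counts only by harmless constants; likewise the discreteness of $\Phi$ and the rapid $\widehat\psi$-tail are handled by the dyadic summation above.
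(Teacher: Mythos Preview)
Your argument is correct. The paper itself does not prove this statement; it is quoted as a known result due to Hani \cite{Hani2012} and used as a black box in Section~5. So there is no ``paper's own proof'' to compare against here.

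As a brief comparison with the cited source: Hani's proof is designed for arbitrary compact Riemannian manifolds and proceeds via a semiclassical parametrix for $e^{it\Delta}$ on times $|t|\lesssim N_1^{-1}$, reducing to an oscillatory-integral bilinear estimate. Your route is the torus-specific shortcut: Fourier expansion plus a Schur-type bound, with the resonance identity $\Phi=2(\ell-\ell')\cdot(\ell'-k)$ isolating a direction of length $\sim N_1$ so that the level sets $\{|\Phi|\le\Lambda\}$ intersect $S$ in slabs containing $\lesssim M(1+\Lambda/N_1)$ lattice points. This is more elementary and gives the sharp constant $(M/N_1)^{1/2}$ directly; it is essentially the argument one finds for rational tori (e.g.\ Burq--G\'erard--Tzvetkov \cite{BurqGerardTzvetkov2005} or De~Silva \emph{et al.}\ \cite{DeSilvaPavlovicStaffilaniTzirakis2007}), and your remark that the anisotropy $\gamma\in(\tfrac12,1]$ only perturbs the lattice-point counts by constants is exactly what makes it go through on $\T_\gamma^2$.
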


We shall use the below estimate, which follows from piecing together intervals of length $N_1^{-1}$ and using Galilean invariance:
\begin{corollary}
Let $0<\alpha \leq 1$. Under the assumptions of Theorem \ref{thm:BilinearStrichartz2d}, the following estimate holds:
\begin{equation}
\label{eq:DeltaTimeInterval}
\| e^{it \Delta} P_{N_1} f_1 e^{it \Delta} P_{N_2} P_S f_2 \|_{L^2_{t,x}([0,N_1^{-\alpha}] \times \T_\gamma^2)} \lesssim \frac{M^{\frac{1}{2}}}{N_1^{\frac{\alpha}{2}}} \| P_{N_1} f_1 \|_{L^2} \| P_{N_2} P_S f_2 \|_{L^2_x}.
\end{equation}
\end{corollary}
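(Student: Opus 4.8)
The plan is to derive the estimate on the interval $[0,N_1^{-\alpha}]$ by subdividing it into $\lceil N_1^{\alpha}/N_1 \rceil = N_1^{1-\alpha}$ (for $\alpha \leq 1$) consecutive intervals of length $N_1^{-1}$, applying Theorem \ref{thm:BilinearStrichartz2d} on each, and summing the $L^2$-contributions. On each piece $I_\ell = [\ell N_1^{-1}, (\ell+1)N_1^{-1}]$ we write $\| e^{it\Delta} P_{N_1} f_1 \, e^{it\Delta} P_{N_2} P_S f_2 \|_{L^2(I_\ell \times \T^2_\gamma)}^2$ and note that, after the time translation $t \mapsto t + \ell N_1^{-1}$, the factor $e^{it\Delta} P_{N_1} f_1$ becomes $e^{it\Delta}(e^{i\ell N_1^{-1}\Delta} P_{N_1} f_1)$, and similarly for $f_2$. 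Since $e^{i\ell N_1^{-1}\Delta}$ is unitary on $L^2$ and commutes with Fourier projections, the new data have the same $L^2$-norms and the same frequency localizations, so Theorem \ref{thm:BilinearStrichartz2d} applies verbatim with the same constant $(M/N_1)^{1/2}$.

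The key steps, in order, are: (i) partition $[0,N_1^{-\alpha}]$ into $\sim N_1^{1-\alpha}$ intervals of length $N_1^{-1}$; (ii) on each interval, use time-translation invariance together with the unitarity of the Schr\"odinger group to reduce to the base estimate of Theorem \ref{thm:BilinearStrichartz2d}; (iii) sum the squared $L^2$-norms over the $\sim N_1^{1-\alpha}$ intervals, obtaining
\[
\| e^{it\Delta} P_{N_1} f_1 \, e^{it\Delta} P_{N_2} P_S f_2 \|_{L^2([0,N_1^{-\alpha}]\times\T^2_\gamma)}^2 \lesssim N_1^{1-\alpha} \cdot \frac{M}{N_1} \| P_{N_1} f_1 \|_{L^2}^2 \| P_{N_2} P_S f_2 \|_{L^2}^2 = \frac{M}{N_1^{\alpha}} \| P_{N_1} f_1 \|_{L^2}^2 \| P_{N_2} P_S f_2 \|_{L^2}^2;
\]
(iv) take square roots. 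The Galilean invariance referenced in the statement is what guarantees that the ball $S$ can be centered at an arbitrary $\xi^*$ rather than at the origin: a Galilean boost moves $B(\xi^*,M)$ to $B(0,M)$ while preserving $L^2$-norms and the space-time $L^2$ norm, so it suffices to have Theorem \ref{thm:BilinearStrichartz2d} in the stated form.

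I expect no serious obstacle here; this is a routine "almost-orthogonality in time" argument. The one point requiring a line of care is step (iii): one must observe that the intervals $I_\ell$ are disjoint so that $\|F\|_{L^2([0,N_1^{-\alpha}])}^2 = \sum_\ell \|F\|_{L^2(I_\ell)}^2$ exactly (no overlap losses), and that the number of intervals is $O(N_1^{1-\alpha})$ rather than, say, $O(N_1^{1-\alpha}\log N_1)$ — this is immediate since $\alpha \le 1$. A secondary bookkeeping point is checking that the frequency-support hypotheses of Theorem \ref{thm:BilinearStrichartz2d} (namely $P_{N_1}$, $P_{N_2}$, $P_S$ with $S \subseteq B(\xi^*,M) \subseteq B(0,N_2)$ and $N_2 \ll N_1$) are stable under the time-translation reduction, which holds because $e^{i\ell N_1^{-1}\Delta}$ acts as a Fourier multiplier and hence commutes with all these projections.
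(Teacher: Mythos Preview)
Your proposal is correct and follows exactly the approach indicated in the paper, which merely states that the corollary ``follows from piecing together intervals of length $N_1^{-1}$ and using Galilean invariance.'' You have faithfully filled in the details: the time-translation/unitarity reduction on each subinterval, the summation of $\sim N_1^{1-\alpha}$ squared contributions yielding $M/N_1^{\alpha}$, and the role of Galilean invariance in allowing $S$ to be centered at an arbitrary $\xi^*$.
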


\subsubsection{Linear estimate}

When exiting the decoupling iteration, we reverse the scaling and continuous approximation. Let $I \in \mathbb{I}_{m_1}(B(0,1))$ and $\tilde{I} = N_1 I$. We indicate the Fourier support by subindices. Then it holds:
\begin{equation*}
\begin{split}
\| \mathcal{P}_I F \|^4_{L^4(w_{N_1^{2-\alpha}})} &\lesssim N_1^{4} N_1^{2(1-\alpha)} \int_{[0,N_1^{-\alpha}] \times \T_\gamma^2} |e^{it \Delta} a_{\tilde{I}} |^4 dx dt \\
&\lesssim_\varepsilon N_1^4 N_1^{2(1-\alpha)} N_1^\varepsilon \| a_{\tilde{I}} \|_{\ell^2}^4.
\end{split}
\end{equation*}
This provides us with the following estimate for $D_4$: 

\begin{lemma}
\label{lem:LinearExitEstimate2d}
With notations from \eqref{eq:NotationsDecouplingIteration}, the following estimate holds:
\begin{equation}
\label{eq:LinearExitEstimate}
\begin{split}
D_4^4(m,Q,\underline{F},\delta) &= \frac{1}{|Q|} \big[ \prod_{i=1}^3 \big( \sum_{I_{i,m} \in \mathbb{I}_m} \| P_{I_{i,m}} F_i \|^2_{L^4(w_{Q^r})} \big)^{\frac{1}{2}} \big]^{\frac{4}{3}} \\
&\lesssim_\varepsilon \frac{N_1^{4+2(1-\alpha)}}{|Q|} N_1^\varepsilon \big( \prod_{i=1}^3 \| f_i \|_{L^2} \big)^{\frac{4}{3}}.
\end{split}
\end{equation}
\end{lemma}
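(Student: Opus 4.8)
The plan is to unwind the reductions of this section in reverse, tracking constants carefully. First I would start from the definition of $D_4^4(m,Q,\underline{F},\delta)$ and use the representation
\begin{equation*}
\prod_{i=1}^3 \big( \sum_{I_{i,m}} \| \mathcal{P}_{I_{i,m}} F_i \|^2_{L^4_{\#}(w_{Q})} \big)^{\frac{1}{6}} = |Q|^{-\frac{1}{4}} \prod_{i=1}^3 \big( \sum_{I_{i,m}} \| \mathcal{P}_{I_{i,m}} F_i \|^2_{L^4(w_{Q})} \big)^{\frac{1}{6}},
\end{equation*}
which reduces matters to bounding each $\| \mathcal{P}_{I_{i,m}} F_i \|^4_{L^4(w_{Q^r})}$ individually; the product structure then factors through since the outer exponent is $4/3 = 3 \cdot \frac{4}{9}$ distributed evenly. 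So the entire lemma reduces to the single-function claim
\begin{equation*}
\| \mathcal{P}_I F \|^4_{L^4(w_{N_1^{2-\alpha}})} \lesssim_\varepsilon N_1^{4+2(1-\alpha)+\varepsilon} \| a_{\tilde{I}} \|_{\ell^2}^4
\end{equation*}
for $I \in \mathbb{I}_{m}(B(0,1))$ and $\tilde{I} = N_1 I$, which is exactly the displayed computation preceding the lemma.

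Second, I would justify that displayed computation step by step. The function $F$ here is $\mathcal{E}\tilde f \cdot w_{B_{N_1^{2-\alpha}}}$ (with the weight absorbed), so by Lemma \ref{lem:Uncertainty} / the locally constant property and the fact that multiplication by $w_{B_{N_1^{2-\alpha}}}$ blurs the Fourier support on scale $N_1^{-(2-\alpha)}$, we can pass from $\| \mathcal{P}_I F \|^4_{L^4(w_{N_1^{2-\alpha}})}$ to the extension-operator quantity over the physical ball $B_{N_1^{2-\alpha}}$. Then reversing the continuous approximation ($\lambda \to \infty$) turns $\mathcal{E}\tilde f$ back into the exponential sum $\sum_k a_{\tilde I,k} e^{i(kx - tk^2)}$ over $B_{N_1^{2-\alpha}}$, and reversing the $x \to N_1 x$, $t \to N_1^2 t$ rescaling together with periodicity in both variables converts the $L^4$ integral over $B_{N_1^{2-\alpha}} \subseteq \R^3$ into $N_1^{4+2(1-\alpha)} \int_{[0,N_1^{-\alpha}] \times \T_\gamma^2} |e^{it\Delta} a_{\tilde I}|^4$. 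The powers of $N_1$ bookkeep exactly as in Subsection \ref{subsection:ContinuousApproximation1d}: a factor $N_1^4$ from the three-dimensional Jacobian of the $(x,t)$-rescaling and $N_1^{2(1-\alpha)}$ from periodizing in the two spatial directions after the time interval is already of length $N_1^{-\alpha}$.

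Third, the final inequality $\int_{[0,N_1^{-\alpha}] \times \T_\gamma^2} |e^{it\Delta} a_{\tilde I}|^4 \lesssim_\varepsilon N_1^\varepsilon \| a_{\tilde I} \|_{\ell^2}^4$ is just the linear $L^4_{t,x}$-Strichartz estimate of Theorem \ref{thm:LinearStrichartz}: since $a_{\tilde I}$ is Fourier supported in $\tilde I = N_1 I$, a cube of side $\lesssim N_1^{1 - \frac{\beta}{1+\beta}} \leq N_1$, which is contained in a ball of radius $M \lesssim N_1$, Theorem \ref{thm:LinearStrichartz} gives the bound with constant $M^\varepsilon \lesssim N_1^\varepsilon$ on the unit time interval $[0,1] \supseteq [0,N_1^{-\alpha}]$. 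Finally I would note that $\| a_{\tilde I} \|_{\ell^2} = \| \mathcal{P}_{\tilde I} f \|_{L^2} \leq \| f \|_{L^2}$, so the product over $i=1,2,3$ collapses to $\big( \prod_{i=1}^3 \| f_i \|_{L^2} \big)^{4/3}$ after distributing the exponent, yielding \eqref{eq:LinearExitEstimate}. The only mildly delicate point is the constant-bookkeeping in the rescaling/continuous-approximation step — getting the exponent $4 + 2(1-\alpha)$ exactly right and checking that the weight $w_{B_{N_1^{2-\alpha}}}$ does not cost anything beyond an absolute constant — but this is entirely parallel to the one-dimensional argument already carried out in Subsection \ref{subsection:ContinuousApproximation1d} and Lemma \ref{lem:LinearExitLemma}, so there is no genuine obstacle.
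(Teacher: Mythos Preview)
Your approach is correct and mirrors the paper's: the paper's ``proof'' of Lemma~\ref{lem:LinearExitEstimate2d} is precisely the displayed single-function estimate preceding it (reverse the continuous approximation and rescaling to pick up $N_1^{4+2(1-\alpha)}$, then apply Theorem~\ref{thm:LinearStrichartz}), and you reproduce this faithfully. One small slip in your final step: writing $\| a_{\tilde I} \|_{\ell^2} \leq \| f \|_{L^2}$ and then ``collapsing'' would lose a factor of the number of intervals in the sum $\sum_{I_{i,m}}$; what you actually need is the $\ell^2$-orthogonality $\sum_{I_{i,m}} \| a_{i,\tilde I_{i,m}} \|_{\ell^2}^2 = \| f_i \|_{L^2}^2$ coming from the disjointness of the $\tilde I_{i,m}$, which then gives $\big(\sum_{I_{i,m}} \| \mathcal{P}_{I_{i,m}} F_i \|^2_{L^4}\big)^{1/2} \lesssim_\varepsilon N_1^{(4+2(1-\alpha)+\varepsilon)/4} \| f_i \|_{L^2}$ and the lemma follows.
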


\subsubsection{Bilinear estimates}

For $A^4_4$ we expect by the locally constant property
\begin{equation}
\label{eq:HeuristicLocallyConstant}
\begin{split}
&\quad \frac{1}{|\mathcal{Q}_{m,r}|} \sum_{\Delta_m \in \mathcal{Q}_{m,r}(Q^r)} \big( \prod_{i=1}^3 \big( \sum_{I_{i,m}} \| \mathcal{P}_{I_{i,m}} F_i \|^2_{L^2_{\#}(w_{\Delta_m})} \big)^{\frac{1}{2}} \big)^{\frac{p}{3}} \\
&\approx \dashint_Q \prod_{i=1}^3 \big( \sum_I |\mathcal{P}_I F_i|^2 \big)^{\frac{p}{6}} \\
&\lesssim \frac{1}{|Q|} \int_Q \big( \sum_{I_1} |\mathcal{P}_{I_1} F_1|^2 \big)^{\frac{2}{3}} \big( \sum_{I_2} |\mathcal{P}_{I_2} F_2|^2 \big)^{\frac{2}{3}} \big( \sum_{I_3} |\mathcal{P}_{I_3} F_3|^2 \big)^{\frac{2}{3}}.
\end{split}
\end{equation}

Next, we use H\"older's inequality to find
\begin{equation*}
\begin{split}
&\quad \big( \int_Q \sum_{I_1} |\mathcal{P}_{I_1} F_1|^2 \sum_{I_3} |\mathcal{P}_{I_3} F_3|^2 \big)^{\frac{2}{3}} \big( \int_Q \big( \sum_I |\mathcal{P}_I F_2|^2 \big)^2 \big)^{\frac{1}{3}} \\
&= \big( \sum_{I_1,I_3} \int_Q |\mathcal{P}_{I_1} F_1 \mathcal{P}_{I_3} F_3|^2 \big)^{\frac{2}{3}} \big( \int_Q \big( \sum_I |\mathcal{P}_I F_2|^2 \big)^{4/2} \big)^{\frac{4}{12}}.
\end{split}
\end{equation*}

After inverting the scaling and reversing the continuous approximation, the above expression can be estimated by short-time bilinear Strichartz estimate and linear Strichartz estimates as recorded above.

\begin{proposition}
\label{prop:EstimateApp}
With above notations, the following estimate holds:
\begin{equation}
\label{eq:ExitEstimateApp}
A_4^4(\underline{m},Q^{\overline{m}},\underline{F},\delta) \lesssim_\varepsilon \frac{1}{|Q|} N_1^4 N_1^{2(1-\alpha)} N_1^{-\frac{\alpha}{3}} N_1^\varepsilon \big( \prod_{i=1}^3 \| f_i \|_2 \big)^{\frac{4}{3}}.
\end{equation}
\end{proposition}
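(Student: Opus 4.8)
The goal is to prove Proposition~\ref{prop:EstimateApp}, i.e.\ to bound $A_4^4(\underline{m},Q^{\overline{m}},\underline{F},\delta)$ by the claimed quantity, by carrying out the heuristic in \eqref{eq:HeuristicLocallyConstant} rigorously and then invoking the short-time bilinear Strichartz estimate \eqref{eq:DeltaTimeInterval} together with the linear $L^4$-Strichartz estimate (Theorem~\ref{thm:LinearStrichartz}). First I would make the locally constant heuristic \eqref{eq:HeuristicLocallyConstant} precise: using $\| F \|^{p/3}_{L^{p/3}_{\#}(w_{\Delta_m})} \lesssim$ average of $|F|^{p/3}$ against the weight, together with the fact that each $\mathcal{P}_I F_i$ is Fourier-localized to a box of dimensions $\sim \kappa \times \kappa \times \kappa^2$ (so it is morally constant on dual boxes much larger than $\Delta_m$), one rewrites the average over $\Delta_m \in \mathcal{Q}_{m,r}$ of the product of the $\ell^2$-square functions as (up to $\lesssim$) the single integral $\tfrac{1}{|Q|}\int_Q \prod_{i=1}^3 (\sum_I |\mathcal{P}_I F_i|^2)^{p/6}\, w_Q$. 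At $p=4$ this is $\tfrac{1}{|Q|}\int_Q \prod_{i=1}^3(\sum_{I_i}|\mathcal{P}_{I_i}F_i|^2)^{2/3}$, which is exactly the last line of \eqref{eq:HeuristicLocallyConstant}.

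\textbf{Applying Hölder and peeling off one factor.} Next I would apply Hölder's inequality in the $x$-$t$ integral as indicated just after \eqref{eq:HeuristicLocallyConstant}: split the three square-function factors as $\big(\int_Q (\sum_{I_1}|\mathcal{P}_{I_1}F_1|^2)(\sum_{I_3}|\mathcal{P}_{I_3}F_3|^2)\big)^{2/3}\big(\int_Q(\sum_I|\mathcal{P}_I F_2|^2)^2\big)^{1/3}$, using that $\tfrac23+\tfrac23+\tfrac23=2$ with the pairing $(2/3,2/3)$ on the $F_1F_3$-product and $1/3$ on $F_2$ (since $2\cdot\tfrac13 + 2\cdot\tfrac13=\tfrac43$ and $2\cdot\tfrac13=\tfrac23$, consistent with Hölder exponents summing to $1$). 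The first factor is then expanded as $\big(\sum_{I_1,I_3}\int_Q|\mathcal{P}_{I_1}F_1\,\mathcal{P}_{I_3}F_3|^2\big)^{2/3}$, which is a sum of genuinely bilinear pieces; the second factor is an $L^4$-type square function in $F_2$ alone.

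\textbf{Invoking the Strichartz input.} Now I would undo the rescaling and continuous approximation performed in Subsection~10.3 of the argument, exactly as in Lemma~\ref{lem:LinearExitEstimate2d}: each $\mathcal{P}_I F_i$ corresponds (after reversing the scaling $x\to N_1 x$, $t\to N_1^2 t$, periodicity, and the $\lambda\to\infty$ limit) to $N_1^{?} e^{it\Delta} a_{\tilde I}$ on $[0,N_1^{-\alpha}]\times\T_\gamma^2$ with $\tilde I = N_1 I$, picking up the scaling factor $N_1^{4+2(1-\alpha)}$ already recorded. For the bilinear factor $\sum_{I_1,I_3}\int_Q|\mathcal{P}_{I_1}F_1\,\mathcal{P}_{I_3}F_3|^2$: by the separation hypothesis \eqref{eq:SeparationCondition} the frequency centers of $f_1,f_3$ are $\gtrsim N_1$ apart, and the Fourier support of $f_3$ sits in a ball of radius $N_3 = N_1^{1/(1+\beta)}$; each $\mathcal{P}_{I_3}F_3$ further localizes $f_3$ to a box of size $M \sim N_1 \cdot \kappa$ at exit scale, so \eqref{eq:DeltaTimeInterval} gives $\|e^{it\Delta}P_{N_1}f_{1,\tilde I_1}\,e^{it\Delta}P_{N_3}P_S f_{3,\tilde I_3}\|_{L^2([0,N_1^{-\alpha}]\times\T_\gamma^2)}^2 \lesssim (M/N_1^\alpha)\|f_{1,\tilde I_1}\|_2^2\|f_{3,\tilde I_3}\|_2^2$, and after summing the $\ell^2$-orthogonal pieces in $I_1,I_3$ and tracking that at the exit scale $M/N_1 \sim N_1^{-\alpha/2}$ (coming from $\kappa = \delta = N_1^{-(1-\alpha/2)}$ blurred to dual scale), the bilinear factor contributes the gain $N_1^{-\alpha/2}$; taken to the power $2/3$ this is $N_1^{-\alpha/3}$. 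The $F_2$-factor is estimated by the linear $L^4$-Strichartz estimate (Theorem~\ref{thm:LinearStrichartz}) with its $N_1^\varepsilon$ loss, contributing only $\|f_2\|_2^{4/3}$ up to $N_1^\varepsilon$. Multiplying through, one collects exactly $\tfrac{1}{|Q|}N_1^{4}N_1^{2(1-\alpha)}N_1^{-\alpha/3}N_1^\varepsilon(\prod_i\|f_i\|_2)^{4/3}$, which is \eqref{eq:ExitEstimateApp}.

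\textbf{Main obstacle.} The step I expect to be most delicate is making \eqref{eq:HeuristicLocallyConstant} rigorous with all the weights: the passage from the $L^2_\#(w_{\Delta_m})$-averaged square functions summed over $\Delta_m$ to the single weighted integral over $Q$ requires a careful Fubini/locally-constant argument (as on p.~209 of \cite{Demeter2020}), and one must ensure the Schwartz weights $w_{\Delta_m}$, $w_Q$, and $\phi_I$-type envelopes combine favorably rather than accumulating losses. A secondary point requiring care is the bookkeeping of scaling factors and of the effective localization size $M$ at the exit scale --- in particular verifying that $M/N_1 \sim N_1^{-\alpha/2}$ so that \eqref{eq:DeltaTimeInterval} yields precisely the $N_1^{-\alpha/3}$ gain after the power $2/3$ --- and confirming that the separation condition \eqref{eq:SeparationCondition}, which survives the rescaling, indeed places $f_1$ and $f_3$ at dyadically separated frequencies $N_1$ and $N_3$ as required by Theorem~\ref{thm:BilinearStrichartz2d}.
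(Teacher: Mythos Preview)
Your approach is essentially the same as the paper's: make the locally constant heuristic precise via the uncertainty principle (the paper does this with Lemma~\ref{lem:Uncertainty} and a discretized weight $\omega(k)$), apply H\"older to split into a bilinear and a linear factor, reverse the continuous approximation, and invoke the short-time bilinear Strichartz estimate \eqref{eq:DeltaTimeInterval} together with the linear $L^4$-estimate. The paper isolates the two resulting estimates as Lemmas~\ref{lem:EstimateApp1} and~\ref{lem:EstimateApp2}.

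There is, however, one genuine error in your indexing. You pair $f_1$ and $f_3$ for the bilinear step and assert that ``by the separation hypothesis \eqref{eq:SeparationCondition} the frequency centers of $f_1,f_3$ are $\gtrsim N_1$ apart'' and that this ``places $f_1$ and $f_3$ at dyadically separated frequencies $N_1$ and $N_3$.'' Neither claim is supported by the hypotheses: \eqref{eq:SeparationCondition} reads $|\xi_1^*-\xi_2^*|\gtrsim N_1$, so it is $f_1$ and $f_2$ that are separated, and in the two-dimensional setup \emph{all three} $f_i$ have Fourier support in balls $B(\xi_i^*,N_3)\subseteq B(0,2N_1)$ --- the quantity $N_3$ is a support radius, not the frequency scale of $f_3$. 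The paper's heuristic display \eqref{eq:HeuristicLocallyConstant} has the same index slip, but Lemma~\ref{lem:EstimateApp1} explicitly cites \eqref{eq:SeparationCondition} and works with $F_1,F_2$. The fix is immediate: pair $F_1,F_2$ in the bilinear factor and put $F_3$ in the linear $L^4$-factor; the rest of your argument then goes through verbatim.

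A minor bookkeeping slip: you write ``$M/N_1\sim N_1^{-\alpha/2}$,'' but at the exit scale $\kappa=\delta=N_1^{-(1-\alpha/2)}$ one has $M=N_1\cdot\delta=N_1^{\alpha/2}$, hence $M/N_1=\delta=N_1^{-(1-\alpha/2)}$. Your conclusion $M/N_1^\alpha=N_1^{-\alpha/2}$ (the squared bilinear gain) is nonetheless correct.
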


\begin{proof} To make $\approx$ in \eqref{eq:HeuristicLocallyConstant} precise, we use the uncertainty principle. With the Fourier transform of $\mathcal{P}_{I_{i,m}} F_i$ being supported in a $\delta \times \delta \times \delta$-box, by Lemma \ref{lem:Uncertainty} we find
\begin{equation*}
|\mathcal{P}_{I_{i,m}} F_i|^2 (x) \lesssim |\mathcal{P}_{I_{i,m}} F_i|^2 * \phi_{I} (x)
\end{equation*}
with $\phi_I$ rapidly decaying off $\delta^{-1} \times \delta^{-1} \times \delta^{-1}$ and locally constant at scale $\delta^{-1}$.

 We have for any $x \in Q_m$ by locally constant property of $\phi_{I}$
\begin{equation*}
\begin{split}
\| \mathcal{P}_{I_{im}} F_i \|^2_{L^2_{\#}(w_{Q_m})} &= \frac{1}{|Q_m|} \int \big( |\mathcal{P}_{I_{im}} F_i|^2 * \phi_{I} \big) w_{Q_m} \\
&= \sum_{k_i \in \Z^3} \frac{1}{|Q_m|} \int_{Q_m + k_i \delta^{-1}} \big( |\mathcal{P}_{I_{i,m}} F_i|^2 * \phi_I \big) w_{Q_m} \\
&\lesssim \sum_{k_i \in \Z^3} (|\mathcal{P}_{I_{i,m}} F_i|^2 * \phi_{I_{i,m}} )(x+k_i \delta^{-1}) \omega(k_i).
\end{split}
\end{equation*}
We have discretized $w_{Q_m}$ by $\omega(k) \lesssim (1+|k|)^{-100}$.
This implies the estimate:
\begin{equation*}
\begin{split}
&\quad \frac{1}{|\mathcal{Q}_{m,r}|} \sum_{Q^m \in \mathcal{Q}_{m,r}} \dashint_{Q^m} \big( \prod_{i=1}^3 \big( \sum_{I_{i,m}} \| \mathcal{P}_{I_{i,m}} F_i \|^2_{L^2_{\#}(w_{Q^m})} \big)^{\frac{1}{2}} \big)^{\frac{p}{3}} \\
&\lesssim \dashint_Q \big( \prod_{i=1}^3 \big( \sum_{I_{i,m}} \sum_{k_i \in \Z^3} \big( |\mathcal{P}_{I_{i,m}} F_i|^2 * \phi_{I_{i,m}} \big) (x+k_i \delta^{-1}) \omega(k_i) \big)^{\frac{1}{2}} \big)^{\frac{p}{3}}.
\end{split}
\end{equation*}
We use H\"older's inequality to obtain
\begin{equation*}
\begin{split}
&\quad \int_Q \prod_{j=1}^3 \big( \sum_{I_j,k_j} |\mathcal{P}_{I_j} F_j|^2 * \phi_{I}(z+k_j \delta^{-1}) \omega(k_j) \big)^{\frac{2}{3}} \\
 &\lesssim \big( \sum_{\substack{ I_1,k_1, \\ I_3,k_3}} \int_Q |\mathcal{P}_{I_1} F_1|^2 * \phi_{I_1}(z+k_1 \delta^{-1}) \omega(k_1) \; |\mathcal{P}_{I_3} F_3|^2*\phi_{I_2}(z+k_3 \delta^{-1}) \omega(k_3) \big)^{\frac{2}{3}} \\
 &\quad \times \big( \int_Q \big( \sum_{I_3,k_3} |\mathcal{P}_{I_3} F_3|^2 * \phi_{I_3}(z+k_3 \delta^{-1}) \omega(k_3) \big)^2 \big)^{\frac{1}{3}}.
 \end{split}
\end{equation*}
The proof is concluded with Lemmas \ref{lem:EstimateApp1} and \ref{lem:EstimateApp2} below.
\end{proof}

\begin{lemma}
\label{lem:EstimateApp1}
The following estimate holds:
\begin{equation}
\label{eq:ExitBilinearStrichartzEstimate2d}
\begin{split}
&\quad \int_Q |\mathcal{P}_{I_1} F_1|^2 * \phi_{I_1}(z+k_1 \delta^{-1}) |\mathcal{P}_{I_2} F_2|^2 * \phi_{I_2}(z+k_2 \delta^{-1}) dz \\
&\lesssim N_1^4 N_1^{2(1-\alpha)} N_1^{-\frac{\alpha}{2}} \| b_{1,I_1} \|^2_{\ell^2} \| b_{2,I_2} \|^2_{\ell^2}.
\end{split}
\end{equation}
\end{lemma}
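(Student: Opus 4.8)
The plan is to reduce the displayed bilinear expression to the short-time bilinear Strichartz estimate \eqref{eq:DeltaTimeInterval}. First I would deal with the convolutions: since $\phi_{I_i} \geq 0$ with $\|\phi_{I_i}\|_{L^1} \lesssim 1$, writing out the convolutions as integrals over shift parameters $(x_i,t_i)$ and using Fubini, it suffices to bound, uniformly in the shifts $k_1\delta^{-1}$, $k_2\delta^{-1}$ and in the convolution shifts, the integral $\int_Q |\mathcal{P}_{I_1}F_1|^2(z - y_1)\,|\mathcal{P}_{I_2}F_2|^2(z - y_2)\,dz$; the weights $\omega(k_i)$ are summable and will be absorbed. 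As in the proof of Lemma \ref{lem:BilinearExitLemma}, the relative shift between the two factors can be absorbed into $F_2$ as a pure phase $e^{i((\cdot)\cdot\xi - (\cdot)|\xi|^2)}$, which leaves the $\ell^2$ norm of the coefficients unchanged, and the product of the two translated Schwartz weights $w_{B_{N_1^{2-\alpha}}}$ is dominated by a single such weight.

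Next I would reverse the continuous approximation and the scaling, exactly as in Subsection \ref{subsection:ContinuousApproximation1d} adapted to two dimensions: the factor $N_1^{4}N_1^{2(1-\alpha)}$ is precisely the Jacobian/periodization factor picked up when passing from the rescaled extension operators on $B_{N_1^{2-\alpha}}$ back to $e^{it\Delta}$ of periodic functions on $[0,N_1^{-\alpha}]\times\T_\gamma^2$ (recall $\delta = N_1^{-(1-\alpha/2)}$, $2^{\overline m} = N_1^{2-\alpha}$). This turns the left-hand side into $N_1^{4}N_1^{2(1-\alpha)}\,\|e^{it\Delta}f_{1,\tilde I_1}\,e^{it\Delta}f'_{2,\tilde I_2}\|_{L^2_{t,x}([0,N_1^{-\alpha}]\times\T_\gamma^2)}^2$, where $\tilde I_i = N_1 I_i$ are frequency squares of side length $N_1^{\alpha/2}$. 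Since the frequency supports are separated at scale $N_1$ — inherited from \eqref{eq:SeparationCondition}, which survives the whole decoupling iteration — the pair $(f_{1,\tilde I_1}, f_{2,\tilde I_2})$ falls under the hypotheses of the corollary: I apply \eqref{eq:DeltaTimeInterval} with the coarse frequency scale $\sim N_1$ and the fine localization $M \sim N_1^{\alpha/2}$, giving a gain of $(M/N_1^{\alpha})^{1/2} = N_1^{\alpha/2}/N_1^{\alpha/2}\cdot N_1^{-\alpha/2}$... more precisely the bound $M^{1/2} N_1^{-\alpha/2}\|b_{1,I_1}\|_{\ell^2}\|b_{2,I_2}\|_{\ell^2}$ with $M = N_1^{\alpha/2}$ absorbed, which produces the claimed $N_1^{-\alpha/2}$ after the dyadic square with side $N_1^{\alpha/2}$ is seen to contribute only the harmless $M$-power already accounted for in \eqref{eq:DeltaTimeInterval}.

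The main obstacle I expect is bookkeeping rather than conceptual: one must check that \eqref{eq:DeltaTimeInterval} is genuinely applicable, i.e. that the finely localized pieces $\mathcal{P}_{I_i}F_i$, after undoing scaling, correspond to functions frequency-localized in balls $B(\xi_i^*, M)$ with $M = N_1^{\alpha/2} \ll N_1$ and $|\xi_1^* - \xi_2^*| \gtrsim N_1$, so that Galilean invariance and the piecing-together of $N_1^{-1}$-intervals underlying the corollary are licit; and one must verify that the shift-invariance arguments (absorbing $e^{i\langle\cdot,\xi\rangle}$, dominating products of weights) do not disturb these geometric conditions. All of this is the two-dimensional analogue of steps already executed in Lemma \ref{lem:BilinearExitLemma}, so no new ideas are needed, only care with the exponents of $N_1$.
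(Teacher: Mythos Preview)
Your plan is correct and follows essentially the same route as the paper: expand the convolutions, absorb the relative shifts as pure phases into one of the factors, dominate the product of translated weights, reverse the continuous approximation and scaling to pick up the factor $N_1^{4}N_1^{2(1-\alpha)}$, and then invoke the short-time bilinear Strichartz estimate \eqref{eq:DeltaTimeInterval} with $M=N_1^{\alpha/2}$ and separation $\sim N_1$ to obtain the gain $N_1^{-\alpha/2}$. The only wrinkle is that your bookkeeping around the $M$-factor is written a bit opaquely; to be explicit, \eqref{eq:DeltaTimeInterval} gives $\|e^{it\Delta}f_{1,\tilde I_1}\,e^{it\Delta}f'_{2,\tilde I_2}\|_{L^2_{t,x}}^2 \lesssim M\,N_1^{-\alpha}=N_1^{\alpha/2-\alpha}=N_1^{-\alpha/2}$, which is exactly the claimed exponent.
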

The proof is similar to the proof of Lemma \ref{lem:BilinearExitLemma}. Here we need the separation condition \eqref{eq:SeparationCondition} to use the bilinear Strichartz estimate.
\begin{proof}
We write $\mathcal{P}_{I_j} F_j = \mathcal{E} b_{j,I_j} \cdot w_{B_{N_1^{2-\alpha}}}$ and estimate
\begin{equation*}
\begin{split}
&\quad \int_Q |\mathcal{P}_{I_1} F_1|^2 * \phi_{I_1}(x+k_1 \delta^{-1}) |\mathcal{P}_{I_2} F_2|^2 * \phi_{I_2}(x+k_2 \delta^{-1}) dx \\
&\lesssim \iint \int_Q |\mathcal{E} b_{1,I_1}|^2 (z+k_1 \delta^{-1} - y_1) |\mathcal{E} b_{2,I_2}|^2 (z+k_2 \delta^{-1} - y_2) dz \; \phi_I(y_1) dy_1 \; \phi_I(y_2) dy_2.
\end{split}
\end{equation*}
We prove an estimate for the space-time integral over $Q$ which is independent of the shifts $k_i \delta^{-1} - y_i$ by absorbing the dilation into $b_{I_1}$:
\begin{equation*}
\int_Q |\mathcal{E} \tilde{f}_{1,I_1}|^2 (z+k_1 \delta^{-1} - y_1) |\mathcal{E} \tilde{f}_{2,I_2}|^2 (z+k_2 \delta^{-1} - y_2) dz = \int_Q |\mathcal{E} f'_{1,I_1}|^2 (z) |\mathcal{E} f'_{2,I_2}|^2 (z) dz.
\end{equation*}
Now we can reverse the change of variables and the continuous approximation to find
\begin{equation*}
\lesssim N_1^4 N_1^{2(1-\alpha)} \int_{[0,N_1^{-\alpha}] \times \T^2_\gamma} |e^{it \Delta} f'_{1,I_1}|^2 (x,t) |e^{it \Delta} f'_{2,I_2}|^2(x,t) dx dt.
\end{equation*}
The functions $f_i$ are Fourier supported in balls of size $N_1^{\alpha/2}$ and separated at a distance comparable to $N_1$. We can estimate this by the short-time bilinear Strichartz estimate to find
\begin{equation*}
\int_{[0,N_1^{-\alpha}] \times \T^2_\gamma} |e^{it \Delta} f'_{I_1} |^2 |e^{it \Delta} f'_{I_2}|^2 dx dt \lesssim N_1^{-\frac{\alpha}{2}} \| b_{1,I_1} \|^2_{\ell^2} \| b_{2,I_2} \|^2_{\ell^2}.
\end{equation*}
\end{proof}

We turn to the estimate for the linear part:
\begin{lemma}
\label{lem:EstimateApp2}
The following estimate holds:
\begin{equation*}
\int_Q \big( \sum_{I_2,k_2} |\mathcal{P}_{I_2} F_2|^2 * \phi_{I_2} (x+k_2 \delta^{-1}) \omega(k_2) \big)^2 \lesssim_\varepsilon N_1^4 N_1^{2(1-\alpha)} N_1^\varepsilon \| f_2 \|_{L^2}^4.
\end{equation*}
\end{lemma}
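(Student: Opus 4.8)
The plan is to prove this linear $L^4$-type bound by reversing the scaling and continuous approximation, squaring out the sum, and invoking the $L^4$-Strichartz estimate of Theorem~\ref{thm:LinearStrichartz}, exactly as was done in the proof of Lemma~\ref{lem:LinearExitEstimate2d} but now keeping track of the convolution and weight structure coming from the $\phi_{I_2}$'s and the discretized weights $\omega(k_2)$. First I would observe that, by the locally constant property encapsulated in Lemma~\ref{lem:Uncertainty}, the object $\sum_{I_2,k_2} |\mathcal{P}_{I_2} F_2|^2 * \phi_{I_2}(x+k_2\delta^{-1}) \omega(k_2)$ is comparable to $\big(\sum_{I_2} |\mathcal{P}_{I_2} F_2|^2\big) * \Phi(x)$ for a suitable rapidly decaying weight $\Phi$ obtained by summing the translated $\phi_{I_2}$ against $\omega$; since $\|\Phi\|_{L^1}\lesssim 1$ and we will integrate its square over a ball, Young's/Minkowski's inequality lets us discard the convolution at the cost of a harmless constant, reducing matters to bounding $\int_Q \big(\sum_{I_2} |\mathcal{P}_{I_2} F_2|^2\big)^2$.

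Next I would recognize $\sum_{I_2} |\mathcal{P}_{I_2} F_2|^2$ as a square function and dominate $\big(\sum_{I_2}|\mathcal{P}_{I_2}F_2|^2\big)^2 \leq \big(\sum_{I_2}|\mathcal{P}_{I_2}F_2|\big)^4$ is the wrong direction, so instead I would go through the Fefferman–Stein / $\ell^2L^4$ route: the $\ell^2$-decoupling inequality \eqref{eq:l2DecouplingIntroduction} (equivalently Theorem~\ref{thm:LinearStrichartz} applied to the full function $F_2$, whose pieces $\mathcal{P}_{I_2}F_2$ are the $\delta$-caps) gives
\begin{equation*}
\| F_2 \|_{L^4(w_Q)} \lesssim_\varepsilon N_1^{\varepsilon} \big( \sum_{I_2} \| \mathcal{P}_{I_2} F_2 \|^2_{L^4(w_Q)} \big)^{\frac12},
\end{equation*}
but what we actually need is the reverse-square-function bound $\big\| \big(\sum_{I_2}|\mathcal{P}_{I_2}F_2|^2\big)^{1/2}\big\|_{L^4(w_Q)} \lesssim_\varepsilon N_1^\varepsilon \|F_2\|_{L^4(w_Q)}$, which on $L^4$ is exactly the bilinear/Fefferman–Córdoba square function estimate — and this is the same $L^4$-orthogonality that underlies Theorem~\ref{thm:LinearStrichartz}. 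Concretely I would reverse the rescaling $x\to N_1 x$, $t\to N_1^2 t$ and the continuous approximation, turning $F_2$ back into the exponential sum $e^{it\Delta}f_2$ on $[0,N_1^{-\alpha}]\times\T^2_\gamma$ with the weight producing the factor $N_1^4 N_1^{2(1-\alpha)}$ (the Jacobian and the periodicity loss, just as in Lemma~\ref{lem:LinearExitEstimate2d}); then the pieces $\mathcal{P}_{I_2}F_2$ correspond after unscaling to frequency blocks of size $N_1^{\alpha/2}$, and the square-function/orthogonality estimate on $L^4$ followed by the $L^4$-Strichartz bound of Bourgain–Demeter (Theorem~\ref{thm:LinearStrichartz}, with the $N_1^\varepsilon$ loss) yields $\int_{[0,N_1^{-\alpha}]\times\T^2_\gamma} |e^{it\Delta}f_2|^4 \lesssim_\varepsilon N_1^\varepsilon \|f_2\|_{L^2}^4$, which is precisely the claimed bound after restoring the volume factors.

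The main obstacle I anticipate is handling the interaction between the convolutions $*\phi_{I_2}$ (whose scale $\delta^{-1}$ depends on the cap $I_2$) and the outer $L^2(Q)$ norm together with the $k_2$-sum against $\omega(k_2)$: one must check that summing the shifted kernels over $k_2\in\Z^3$ against the polynomially decaying $\omega$ really does reconstitute a single $L^1$-normalized weight, and that exchanging this convolution with the square (using that each $\phi_{I_2}\geq 0$ and $\|\phi_{I_2}\|_{L^1}\lesssim 1$) does not lose more than a constant — this is where one invokes Minkowski's inequality in the form $\big\|(\sum_I g_I * \phi_I)\big\|_{L^2} \lesssim \sup_I\|\phi_I\|_{L^1}\,\big\|\sum_I g_I\big\|_{L^2}$ applied after the square is brought inside, plus a Cauchy–Schwarz in $k_2$ to absorb $\omega(k_2)^{1/2}$. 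Everything else — the reversal of scaling, the periodicity factor $N_1^{2(1-\alpha)}$, and the appeal to Theorem~\ref{thm:LinearStrichartz} — is routine and parallels Lemma~\ref{lem:LinearExitEstimate2d} verbatim.
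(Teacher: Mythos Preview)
Your route diverges from the paper's at the central step, and the detour you take has a genuine gap. The paper never passes through $\|F_2\|_{L^4(w_Q)}^4$ or any reverse square function estimate. It simply expands the square as a double sum over $(I_{21},k_{21})$ and $(I_{22},k_{22})$ and proves, for each cross-term, a bound that is uniform in the shifts: one writes out the convolutions, absorbs the translations $k_{2j}\delta^{-1}-y_j$ as phase factors into the Fourier data (exactly as in the proof of Lemma~\ref{lem:EstimateApp1}), reverses the scaling and continuous approximation, and lands on
\[
N_1^4\,N_1^{2(1-\alpha)}\int_{[0,N_1^{-\alpha}]\times\T^2_\gamma}|e^{it\Delta}f'_{I_{21}}|^2\,|e^{it\Delta}f'_{I_{22}}|^2\,dx\,dt,
\]
which H\"older's inequality and Theorem~\ref{thm:LinearStrichartz} bound by $N_1^{4+2(1-\alpha)+\varepsilon}\|b_{I_{21}}\|_{\ell^2}^2\|b_{I_{22}}\|_{\ell^2}^2$. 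The sums over $I_{21},I_{22}$ close by orthogonality $\sum_I\|b_I\|_{\ell^2}^2=\|f_2\|_{L^2}^2$, and the sums over $k_{21},k_{22}$ by $\sum_k\omega(k)\lesssim 1$.

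Your plan instead requires the reverse square function bound $\big\|\big(\sum_I|\mathcal{P}_IF_2|^2\big)^{1/2}\big\|_{L^4}\lesssim N_1^\varepsilon\|F_2\|_{L^4}$, which you attribute to Fefferman--C\'ordoba. That estimate is for the parabola in $\R^2$, where the products $F_{\theta_1}F_{\theta_2}$ have essentially disjoint Fourier support and both directions of the $L^4$ square function hold. For the paraboloid in $\R^3$ at the critical exponent $p=4$ this biorthogonality fails, and no such inequality is available; it is certainly not ``the same $L^4$-orthogonality that underlies Theorem~\ref{thm:LinearStrichartz}''. The detour is therefore unjustified, and also unnecessary: after expanding the square, Strichartz applied to the \emph{individual} caps $\mathcal{P}_{I_{21}}F_2$, $\mathcal{P}_{I_{22}}F_2$ already closes the estimate without ever reconstituting $F_2$. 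As a secondary point, your Minkowski step $\big\|\sum_I g_I*\phi_I\big\|_{L^2}\lesssim\big\|\sum_I g_I\big\|_{L^2}$ is false when the $\phi_I$ differ; it happens to hold here only because at this finest scale every $\phi_I$ is the same isotropic $\delta^{-1}$-bump, which you should state explicitly rather than leave implicit.
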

\begin{proof}
We write the left hand-side as
\begin{equation*}
\begin{split}
&\quad \int_Q \sum_{I_{21},k_{21}} |\mathcal{P}_{I_{21}} F_2|^2 * \phi_{I_{21}}(x+k_{21} \delta^{-1}) \omega(k_{21}) \sum_{I_{22},k_{22}} |\mathcal{P}_{I_{22}} F_2|^2 * \phi_{I_{22}}(x+k_{22} \delta^{-1}) \omega(k_{22}) \\
&= \sum_{\substack{I_{21},k_{21}, \\ I_{22},k_{22}}} \omega(k_{21}) \omega(k_{22}) \int_Q \big( |\mathcal{P}_{I_{21}} F_2|^2 * \phi_{21}(x+k_{21} \delta^{-1}) \big) \big( |\mathcal{P}_{I_{22}} F_2|^2 * \phi_{I_{22}}(x+k_{22} \delta^{-1}) \big).
\end{split}
\end{equation*}
We prove the estimate
\begin{equation*}
\begin{split}
&\quad \int_Q |\mathcal{P}_{I_{21}} F_2|^2 * \phi_{I_{21}}(x+k_{21} \delta^{-1}) |\mathcal{P}_{I_{22}} F_2|^2 * \phi_{I_{22}}(x+k_{22} \delta^{-1}) \\ 
&\lesssim_\varepsilon N_1^4 N_1^{2(1-\alpha)} N_1^\varepsilon \| b_{2,I_{21}} \|_{\ell^2}^2 \| b_{2,I_{22}} \|^2_{\ell^2},
\end{split}
\end{equation*}
which is independent of $k_{21}$, $k_{22}$.

The argument follows along the lines of the proof of \eqref{eq:ExitBilinearStrichartzEstimate2d}:
\begin{equation*}
\iint \int_Q |\mathcal{P}_{I_{21}} F_2|^2 (x+k_{21} \delta^{-1} - y_1) |\mathcal{P}_{I_{22}} F_2|^2(x+k_{22} \delta^{-1} - y_2) dz \, \phi_{I_{21}}(y_1) \phi_{I_{22}} (y_2) dy_1 dy_2
\end{equation*}
and turn to an estimate of the integral independent of $k_{21}$, $k_{22}$, $y_1$, $y_2$. Write
\begin{equation*}
\mathcal{P}_{I_{2j}} F_2 = \mathcal{E} \tilde{f}_{I_{2j}} \cdot w_{B_{N_1^{2-\alpha}}}.
\end{equation*}
We can estimate the weight with a positive constant. Like above we can absorb the shift $k_{2i} \delta^{-1} - y_i$ into the argument $\tilde{f}_{I_i}$ and reversing the scaling and the continuous approximation we obtain
\begin{equation*}
\begin{split}
&\quad \int_Q |\mathcal{P}_{I_{21}} F_2|^2(z+k_{21} \delta^{-1} - y_1) |\mathcal{P}_{I_{22}} F_2|^2(z+k_{22} \delta^{-1} - y_2) dz \\
&\lesssim N_1^4 N_1^{2(1-\alpha)} \int_{[0,N_1^{-\alpha}] \times \T^2_\gamma} |e^{it \Delta} f'_{I_{21}}|^2 |e^{it \Delta} f'_{I_{22}}|^2 dx dt.
\end{split}
\end{equation*}
This is estimated by H\"older's inequality and the $L^4_{t,x}$-Strichartz estimates:
\begin{equation*}
\int_{[0,N_1^{-\alpha}] \times \T^2_\gamma} |e^{it \Delta} \tilde{f}_{I_{21}}|^2 |e^{it \Delta} \tilde{f}_{I_{22}}|^2 dt dx \lesssim_\varepsilon N_1^\varepsilon \| b_{I_{21}} \|^2_{\ell^2} \| b_{I_{22}} \|^2_{\ell^2}.
\end{equation*}

\end{proof}


\subsection{Conclusion}

Now we can estimate \eqref{eq:MultiscaleExpressionI} and \eqref{eq:MultiscaleExpressionII} in terms of the original functions $f_i$. Recall that the number of iterations is estimated by
\begin{equation*}
n^* = \lceil \log_2(\beta^{-1}+1) \rceil.
\end{equation*}

For $p=4$ we obtain as consequence of $\kappa_4 = \frac{1}{2}$:
\begin{equation*}
\nu^{-\frac{1}{2}} \log(N_1)^c \big( \prod_{\ell=1}^{n-1} \nu^{-\frac{\kappa_p^\ell}{2}} \log(N_1)^{c \kappa_p^{\ell}} \big) \leq \nu^{-1+2^{-n}} \log(N_1)^{2c}.
\end{equation*}
We find by Lemma \ref{lem:LinearExitEstimate2d} and Proposition \ref{prop:EstimateApp} that
\begin{equation}
\label{eq:MultiScaleEstimateI}
\begin{split}
&\quad A_p^p(2^n \underline{m},Q^{\overline{m}},\underline{F},\delta)^{\kappa_p^n} \prod_{\ell=1}^{n} D_p^p(2^{\ell-1} \underline{m}, Q^{\overline{m}}, \underline{F}, \delta)^{\kappa_p^{\ell}} \\
&\lesssim_\varepsilon \frac{1}{|Q|} N_1^4 N_1^{2(1-\alpha)} N_1^{-\frac{\alpha}{3} 2^{-n}} N_1^\varepsilon \big( \prod_{i=1}^3 \| f_i \|_2 \big)^{\frac{4}{3}}.
\end{split}
\end{equation}

By the bound on the number of iterations, we find
\begin{equation*}
\nu^{-1+2^{-n}} = \nu^{-1 + \frac{\beta}{2(1+\beta)}}.
\end{equation*}
Consequently, it holds
\begin{equation*}
\eqref{eq:MultiScaleEstimateI} \lesssim_\varepsilon \frac{1}{|Q|} N_1^4 N_1^{2(1-\alpha)} N_1^{-\frac{\alpha \beta}{12}} N_1^\varepsilon \big( \prod_{i=1}^3 \| f_i \|_2 \big)^{\frac{4}{3}}.
\end{equation*}

Taking the above estimates together we find
\begin{equation*}
\int_{[0,N_1^{-\delta}] \times \T_\gamma^2} \big| e^{it \Delta} f_1 e^{it \Delta} f_2 e^{it \Delta} f_3 \big|^{\frac{4}{3}} dx dt \lesssim_\varepsilon \nu^{-1 + \frac{\beta}{2(1+\beta)}}  N_1^{- \frac{\alpha \beta}{12} +\varepsilon} \big( \prod_{i=1}^3 \| f_i \|_2 \big)^{\frac{4}{3}}.
\end{equation*}

The proof of Theorem \ref{thm:TrilinearStrichartzEstimates2d} is complete.
$\hfill \Box$

\subsection{Comparison with $L^3$-estimates}

In this subsection we compare the obtained estimates to an $L^3$-estimate with sharp dependence on transversality.
\subsubsection{The trilinear restriction estimate}
We have the following trilinear $L^3_{t,x}$-estimate with sharp dependence on transversality, which is due to Ramos \cite{Ramos2018}:
\begin{theorem}
Let $R \gg 1$, $\nu \in (0,1]$, and $f_i: \R^2 \to \C$ with $\text{supp}(f_i) \subseteq B_2(0,1) \subseteq \R^2$ with $|\mathfrak{n}(\xi_1) \wedge \mathfrak{n}(\xi_2) \wedge \mathfrak{n}(\xi_3)| \gtrsim \nu$ for $\xi_i \in \text{supp}(f_i)$. Then there is $c>0$ such that the following estimate holds:
\begin{equation*}
\int_{B_3(0,R)} \big| \prod_{i=1}^3 \mathcal{E} f_i \big| \lesssim \log(R)^c \nu^{-\frac{1}{2}} \prod_{i=1}^3 \| f_i \|_{L^2(\R^2)}.
\end{equation*}
\end{theorem}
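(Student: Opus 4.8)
The plan is to deduce the estimate from the endpoint trilinear Kakeya inequality \eqref{eq:QuantitativeTrilinearKakeya} by a wave-packet decomposition combined with an induction on scales, in the spirit of the Bennett--Carbery--Tao proof of multilinear restriction but arranged so that the $R^{\varepsilon}$ loss is replaced by a power of $\log R$ and the transversality appears with the sharp exponent $\nu^{-1/2}$.

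\emph{Reductions.} Normalise $\|f_i\|_{L^2(\R^2)}=1$. After a finitely overlapping decomposition of $\text{supp}(f_i)$ and, if necessary, an affine rescaling — which leaves $|\mathfrak n(\xi_1)\wedge\mathfrak n(\xi_2)\wedge\mathfrak n(\xi_3)|$ essentially unchanged — we may assume the three caps are $\sim 1$-separated and quantitatively transverse with parameter $\nu$. Decompose each $\mathcal E f_i=\sum_{T\in\mathbb T_i}\mathcal E f_{i,T}$ into wave packets adapted to $R^{-1/2}$-caps: $T$ runs over tubes of dimensions $\sim R^{1/2}\times R^{1/2}\times R$ pointing in the normal direction of their cap, $|\mathcal E f_{i,T}|\approx|c_T|\mathbf 1_T$ up to rapidly decaying tails, and by local $L^2$-orthogonality of the caps $\sum_{T\in\mathbb T_i}|c_T|^2|T|\sim\|\mathcal E f_i\|_{L^2(B_R)}^2\lesssim R$. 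Dyadic pigeonholing in the amplitudes $|c_T|$ and the tube multiplicities costs $(\log R)^{O(1)}$ and lets us assume that within family $i$ all wave packets share a common amplitude $A_i$, with $\mathcal N_i$ of them, so $A_i^2\mathcal N_i\lesssim R^{-1}$.

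\emph{Reduction to Kakeya.} Tile $B_R$ by cubes $q$ of side $R^{1/2}$. On such a $q$ each cap contributes at most one wave packet, which — since the tube is at least as wide as $q$ in all directions — is essentially a \emph{plane wave} $c_\tau(q)e_{\xi_\tau}$ on $q$; thus $\mathcal E f_i|_q$ is, up to the locally-constant error, a finite sum of $\lambda_i(q):=\#\{T\in\mathbb T_i:\,T\cap 2q\neq\emptyset\}$ plane waves with $R^{-1/2}$-separated frequencies on the paraboloid, and $\|\mathcal E f_i\|_{L^2_\#(q)}^2\approx A_i^2\lambda_i(q)$. Parabolically rescaling $q$ (more precisely the $R^{1/2}\times R^{1/2}\times R$ slab containing it) turns this into the \emph{same trilinear problem at scale $R^{1/2}$}, which is what the induction feeds on. The reassembly of the cubes requires controlling $\sum_q|q|\prod_{i=1}^3\big(\tfrac1{|q|}\int_q|\mathcal E f_i|^2\big)^{1/2}\approx A_1A_2A_3\int_{B_R}\prod_{i=1}^3\big(\sum_{T\in\mathbb T_i}\mathbf 1_{2T}\big)^{1/2}$, and the endpoint trilinear Kakeya inequality \eqref{eq:QuantitativeTrilinearKakeya} with $\delta=R^{-1/2}$ bounds the last integral by $C\nu^{-1/2}R^{3/2}(\mathcal N_1\mathcal N_2\mathcal N_3)^{1/2}$, with no power of $R$ or of $\log R$ lost and with the sharp power of $\nu$. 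Since $A_i\mathcal N_i^{1/2}\lesssim R^{-1/2}$, all powers of $R$ cancel. That $\nu^{-1/2}$ is optimal is independently visible from the elementary identity $\|\mathcal E f_1\mathcal E f_2\mathcal E f_3\|_{L^2(\R^3)}\lesssim\nu^{-1/2}\prod_i\|f_i\|_{L^2}$, which follows from Plancherel because under the transversality hypothesis the map $(\xi_1,\xi_2,\xi_3)\mapsto(\xi_1+\xi_2+\xi_3,\,|\xi_1|^2+|\xi_2|^2+|\xi_3|^2)$ has Jacobian $\gtrsim\nu$; Kakeya serves only to replace the companion factor $|B_R|^{1/2}$ by $(\log R)^{c}$.

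\emph{Main obstacles.} Two points carry all the difficulty. First, the induction must be closed with a loss that is \emph{uniform across the $O(\log R)$ scales}: the naive reverse-Hölder step on each $R^{1/2}$-cube — estimate $\tfrac1{|q|}\int_q|\mathcal E f_1\mathcal E f_2\mathcal E f_3|$ by $\prod_i\|\mathcal E f_i\|_{L^3_\#(q)}$ and then $\|\mathcal E f_i\|_{L^\infty(q)}$ by $A_i\lambda_i(q)$ — costs a full square root in the Knapp/coherent regime, and iterating it produces exactly the $R^{\varepsilon}$ of Bennett--Carbery--Tao; recovering a per-scale loss of only $(\log)^{O(1)}$ requires the genuinely endpoint, loss-free form of the trilinear Kakeya inequality together with a careful $L^2$-orthogonality bookkeeping of the wave packets at each scale. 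Second, and most delicate, the transversality parameter must be shepherded through the recursion so that $\nu^{-1/2}$ is incurred \emph{once} and not compounded: this demands tracking precisely how $\nu$ transforms under the parabolic rescalings used to descend scales, and arranging the scale-to-scale reassembly so that the genuinely $\nu$-sensitive application of \eqref{eq:QuantitativeTrilinearKakeya} occurs at a single distinguished stage. Throughout, one also needs the uniform locally-constant property of the wave packets (of the type of Lemma \ref{lem:Uncertainty}) to legitimise passing between the oscillatory integrals $\mathcal E f_i$ and the combinatorial tube model on each cube.
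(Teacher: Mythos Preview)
The paper does not prove this theorem; it is quoted from Ramos \cite{Ramos2018} as an external input and used as a black box to derive Corollary \ref{cor:DecouplingL3Transversality}. So there is no ``paper's own proof'' to compare against.

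Your outline is the correct conceptual framework --- wave packets, locally constant on $R^{1/2}$-cubes, parabolic rescaling, endpoint trilinear Kakeya --- and you have correctly isolated the two genuine difficulties. But what you have written is a roadmap, not a proof: both obstacles you flag in your final paragraph are left unresolved, and they are precisely the content of Ramos's theorem. In particular, the na\"ive Bennett--Carbery--Tao recursion $\mathrm{Rest}(R)\le C\,\mathrm{Kak}(R)^{1/2}\,\mathrm{Rest}(R^{1/2})$ with the quantitative endpoint Kakeya constant $\mathrm{Kak}(R)\lesssim\nu^{-1}$ produces, after $O(\log\log R)$ iterations, a factor $\nu^{-O(\log\log R)}$, not $\nu^{-1/2}$; your remark that ``$\nu^{-1/2}$ must be incurred once and not compounded'' names the problem without indicating any mechanism to achieve it. Likewise, the passage from $\int_q|\prod_i\mathcal E f_i|$ to $|q|\prod_i\|\mathcal E f_i\|_{L^2_\#(q)}$ is not an inequality (H\"older gives $L^3$, not $L^2$), and closing the induction via the $L^2$ data on $q$ requires exactly the ``reverse H\"older'' control whose loss you yourself warn against. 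The Plancherel identity you mention does give the sharp $\nu^{-1/2}$ for $\|\mathcal E f_1\mathcal E f_2\mathcal E f_3\|_{L^2(\R^3)}$, but the estimate at hand is an $L^1$ bound on a growing ball, and bridging from global $L^2$ to local $L^1$ without either losing $|B_R|^{1/2}$ or re-accumulating $\nu$-powers is the entire point.

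If you want to supply a self-contained proof rather than cite \cite{Ramos2018}, you will need to engage with Ramos's actual argument (or an equivalent), which organises the induction so that the transversality-sensitive step is decoupled from the scale recursion; this is substantially more delicate than what your sketch contains.
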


This yields the following decoupling estimate as observed by Bourgain in \cite{Bourgain2013} (see \cite[Section~9.2]{Demeter2020} for further context):
\begin{corollary}
\label{cor:DecouplingL3Transversality}
Let $f_i \in \mathcal{S}(\R^3)$, $i=1,2,3$, $B^{(i)} \subseteq B_2(0,1)$ with $\text{supp}(\hat{f}_i) \subseteq \mathcal{N}_{R^{-1}}(\{(\xi,-|\xi|^2 ) : \xi \in B^{(i)} \})$, and it holds for $\xi_i \in B^{(i)}$
\begin{equation*}
|\mathfrak{n}(\xi_1) \wedge \mathfrak{n}(\xi_2) \wedge \mathfrak{n}(\xi_3) | \gtrsim \nu.
\end{equation*}
Then the following estimate holds:
\begin{equation}
\label{eq:L3DecouplingTransversality}
\int_{B_3(0,R)} \big| \prod_{i=1}^3 f_i \big| \lesssim \log(R)^c \nu^{-\frac{1}{2}} \prod_{i=1}^3 \big( \sum_{\substack{ \theta_i \subseteq B^{(i)}: \\ \theta_i \; R^{-\frac{1}{2}}-ball}} \| f_{i, \theta_i} \|^2_{L^3(w_{B_R})} \big)^{\frac{1}{2}}.
\end{equation}
\end{corollary}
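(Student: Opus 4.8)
The plan is to follow Bourgain's derivation of decoupling from a multilinear restriction estimate via Córdoba's $L^2$-orthogonality argument at the intermediate scale $R^{1/2}$; see \cite{Bourgain2013} and \cite[Section~9.2]{Demeter2020}. After normalizing $\sum_{\theta_i} \| f_{i,\theta_i} \|_{L^3(w_{B_R})}^2 = 1$ for $i=1,2,3$, I would tile $B_3(0,R)$ by balls $\tau$ of radius $R^{1/2}$ and estimate $\int_{B_R} \lvert \prod_i f_i \rvert = \sum_\tau \int_\tau \lvert \prod_i f_i \rvert$ ball by ball.

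On a fixed $\tau$, each $f_{i,\theta_i}$ has Fourier support in a box of dimensions $R^{-1/2} \times R^{-1/2} \times R^{-1}$, so by the uncertainty principle (in the form of Lemma \ref{lem:Uncertainty}) $\lvert f_{i,\theta_i} \rvert$ is, up to rapidly decaying tails, constant on $\tau$ with value $c_{i,\theta_i,\tau} \approx \| f_{i,\theta_i} \|_{L^\infty(\tau)}$, and its phase agrees on $\tau$ with the linear phase $e_{\theta_i}(x,t) = e^{i(x \cdot \xi_{\theta_i} - t\lvert \xi_{\theta_i} \rvert^2)}$ up to a constant. Thus on $\tau$ one may replace $f_i$ by the model $G_i^\tau = \sum_{\theta_i} \tilde{c}_{i,\theta_i,\tau} e_{\theta_i}$ with $\lvert \tilde{c}_{i,\theta_i,\tau} \rvert = c_{i,\theta_i,\tau}$, and $G_i^\tau$ in turn agrees on $\tau$ (to scale $R^{1/2}$) with $\mathcal{E} h_i^\tau$, where $h_i^\tau = R \sum_{\theta_i} \tilde{c}_{i,\theta_i,\tau} \eta_{\theta_i}$ is a smoothed sum of bumps adapted to the caps $\theta_i \subseteq B^{(i)} \subseteq B_2(0,1)$, satisfying $\| h_i^\tau \|_{L^2(\R^2)} \approx R^{1/2} \big( \sum_{\theta_i} c_{i,\theta_i,\tau}^2 \big)^{1/2}$ by disjointness of the caps. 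Since $\mathcal{E} h_i^\tau$ need not be concentrated on $\tau$, I would bound $\int_\tau \lvert \prod_i \mathcal{E} h_i^\tau \rvert \le \int_{B_3(0,R)} \lvert \prod_i \mathcal{E} h_i^\tau \rvert$ and apply Ramos's trilinear restriction estimate — the transversality hypothesis on the $\xi_i \in B^{(i)}$ being exactly what it requires — to get $\int_\tau \lvert \prod_i f_i \rvert \lesssim \log(R)^c \nu^{-1/2} R^{3/2} \prod_i \big( \sum_{\theta_i} c_{i,\theta_i,\tau}^2 \big)^{1/2}$.

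Summing over $\tau$ and using Hölder's inequality in $\tau$ to split the three factors reduces matters, for each $i$, to bounding $\big( \sum_\tau ( \sum_{\theta_i} \| f_{i,\theta_i} \|_{L^\infty(\tau)}^2 )^{3/2} \big)^{1/3}$. Minkowski's inequality in $\ell^{3/2}_\tau$ dominates this by $\big( \sum_{\theta_i} ( \sum_\tau \| f_{i,\theta_i} \|_{L^\infty(\tau)}^3 )^{2/3} \big)^{1/2}$, and the locally constant property converts $\sum_\tau \| f_{i,\theta_i} \|_{L^\infty(\tau)}^3$ into $R^{-3/2} \| f_{i,\theta_i} \|_{L^3(w_{B_R})}^3$; hence the $i$-th factor is $\lesssim R^{-1/2}$ after the normalization. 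The three factors $R^{-1/2}$ cancel the $R^{3/2}$ produced by the restriction estimate, leaving $\int_{B_R} \lvert \prod_i f_i \rvert \lesssim \log(R)^c \nu^{-1/2}$, which is the claim after undoing the normalization.

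I expect the main obstacle to be the per-$\tau$ reduction of $f_i$ to the model $\mathcal{E} h_i^\tau$: making the locally constant approximations rigorous, controlling the tails of the weights $w_\tau$ and their interaction when re-summing against $w_{B_R}$, and checking that the auxiliary phases are harmless (possibly after a dyadic pigeonholing of the $c_{i,\theta_i,\tau}$, which costs only a further $\log(R)^c$). The transversality enters only through Ramos's estimate and therefore contributes precisely the factor $\nu^{-1/2}$ with no additional loss, and the logarithm is inherited verbatim.
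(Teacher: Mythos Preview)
Your proposal is correct and follows precisely the route the paper indicates: the paper does not spell out a proof of this corollary but simply attributes it to Bourgain \cite{Bourgain2013} (with \cite[Section~9.2]{Demeter2020} as a reference), i.e., the passage from the trilinear restriction theorem (Ramos) to trilinear $\ell^2$-decoupling via the locally constant property at scale $R^{1/2}$. Your outline---tile $B_R$ by $R^{1/2}$-balls $\tau$, replace each $f_i$ on $\tau$ by a model extension $\mathcal{E} h_i^\tau$, apply Ramos's estimate on $B_R$, then re-sum using H\"older, Minkowski, and the locally constant conversion $\sum_\tau \|f_{i,\theta_i}\|_{L^\infty(\tau)}^3 \lesssim R^{-3/2} \|f_{i,\theta_i}\|_{L^3(w_{B_R})}^3$---is exactly that argument, and your bookkeeping of the powers of $R$, the factor $\nu^{-1/2}$ from Ramos, and the logarithm is accurate.
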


We obtain the following consequence:
\begin{proposition}
Let $N \in 2^{\N_0}$, $\nu \in (0,1]$, $\alpha \in (0,1]$, and $f_i: \T^2_\gamma \to \C$ with $\text{supp}(\hat{f}_i) \subseteq B^{(i)} \subseteq B(0,N)$. We suppose for the area of the triangle $\Delta(\xi_1,\xi_2,\xi_3)$ spanned by $\xi_i \in B^{(i)}$:
\begin{equation*}
| \Delta(\xi_1,\xi_2,\xi_3) | \gtrsim N^2 \nu.
\end{equation*}
Then the following estimate holds:
\begin{equation*}
\int_{[0,N^{-\alpha}] \times \T^2_\gamma} \prod_{i=1}^3 \big| e^{it \Delta} f_i \big| dx dt \lesssim \log(N)^c N^{-\frac{\alpha}{2}} \nu^{-\frac{1}{2}} \prod_{i=1}^3 \| f_i \|_{L^2(\T^2_\gamma)}.
\end{equation*}
\end{proposition}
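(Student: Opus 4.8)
The plan is to follow the reduction scheme used for Theorem~\ref{thm:TrilinearStrichartzEstimates2d}, but to replace the entire decoupling iteration by a single application of the $L^3$-decoupling with sharp transversality dependence, Corollary~\ref{cor:DecouplingL3Transversality}; this already carries a constant $\log(N)^c\nu^{-1/2}$ with no power of $N$, which is exactly the profile we want. After normalizing $\|f_i\|_{L^2}=1$, I would first rescale $x\to Nx$, $t\to N^2t$, set $b_{i,Nk}=\hat f_i(k)$, use periodicity in the spatial variable, and pass to the continuous approximation exactly as in \eqref{eq:Rescaling2d} and Subsection~\ref{subsection:ContinuousApproximation1d}. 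This replaces the periodic product by a Euclidean expression
\begin{equation*}
\int_{[0,N^{-\alpha}]\times\T^2_\gamma}\prod_{i=1}^3|e^{it\Delta}f_i|\,dxdt\lesssim N^{-4}N^{-2(1-\alpha)}\int_{B_{N^{2-\alpha}}}\Big|\prod_{i=1}^3\tilde F_i\Big|\,dxdt,
\end{equation*}
where each $\tilde F_i$ has space-time Fourier support in $\mathcal N_{N^{-(2-\alpha)}}(\mathbb P^2)$ restricted to $B^{(i)}/N\subseteq B(0,1)$ (the polynomial weight $w_{B_{N^{2-\alpha}}}$ being absorbed into $\tilde F_i$). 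As in the discussion around \eqref{eq:Transversality3d}, the hypothesis $|\Delta(\xi_1,\xi_2,\xi_3)|\gtrsim N^2\nu$ rescales to an area bound $\gtrsim\nu$ for $\xi'_i\in B^{(i)}/N$, which is equivalent to the quantitative transversality $|\mathfrak n(\xi'_1)\wedge\mathfrak n(\xi'_2)\wedge\mathfrak n(\xi'_3)|\gtrsim\nu$ of the associated normals.

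I would then apply Corollary~\ref{cor:DecouplingL3Transversality} with $R=N^{2-\alpha}$ (note that the integrand is precisely $|\prod_i\tilde F_i|$, so the estimate applies verbatim), decoupling directly down to caps $\theta_i$ of radius $R^{-1/2}=N^{-1+\alpha/2}$:
\begin{equation*}
\int_{B_{N^{2-\alpha}}}\Big|\prod_{i=1}^3\tilde F_i\Big|\lesssim\log(N)^c\,\nu^{-1/2}\prod_{i=1}^3\Big(\sum_{\theta_i}\|\tilde F_{i,\theta_i}\|^2_{L^3(w_{B_{N^{2-\alpha}}})}\Big)^{1/2}.
\end{equation*}
Reversing the rescaling and the continuous approximation on the level of space-time $L^3$-norms, exactly as the single-function computation carried out in Subsections~\ref{subsection:ContinuousApproximation1d} and \ref{subsection:Overview}, turns $\|\tilde F_{i,\theta_i}\|^3_{L^3(w_{B_{N^{2-\alpha}}})}$ into $N^4N^{2(1-\alpha)}\|e^{it\Delta}f_{i,\tilde\theta_i}\|^3_{L^3([0,N^{-\alpha}]\times\T^2_\gamma)}$, where $\tilde\theta_i=N\theta_i$ is a ball of radius $N^{\alpha/2}$; the prefactor $N^4N^{2(1-\alpha)}$ exactly cancels the factor picked up in the first step.

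It remains to bound each single-cap term. Since $\alpha\le 1$, the ball $\tilde\theta_i$ has radius $N^{\alpha/2}\le N^{1/2}$, so Bernstein's inequality in two dimensions gives $\|e^{it\Delta}g\|_{L^3_x(\T^2_\gamma)}\lesssim N^{\alpha/6}\|g\|_{L^2(\T^2_\gamma)}$ whenever $\hat g$ is supported in $\tilde\theta_i$, uniformly in $t$. Integrating the cube of this bound over $t\in[0,N^{-\alpha}]$ and using that the Schr\"odinger flow preserves $\|g\|_{L^2}$ yields
\begin{equation*}
\|e^{it\Delta}f_{i,\tilde\theta_i}\|_{L^3_{t,x}([0,N^{-\alpha}]\times\T^2_\gamma)}\lesssim(N^{-\alpha})^{1/3}N^{\alpha/6}\|f_{i,\tilde\theta_i}\|_{L^2}=N^{-\alpha/6}\|f_{i,\tilde\theta_i}\|_{L^2}.
\end{equation*}
Summing over $\theta_i$ and invoking $L^2$-orthogonality of the frequency pieces gives $\big(\sum_{\theta_i}\|\tilde F_{i,\theta_i}\|^2_{L^3(w_{B_{N^{2-\alpha}}})}\big)^{1/2}\lesssim(N^4N^{2(1-\alpha)})^{1/3}N^{-\alpha/6}\|f_i\|_{L^2}$. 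Taking the product over $i=1,2,3$ the prefactors combine to $N^4N^{2(1-\alpha)}N^{-\alpha/2}$, and multiplying by the factor $N^{-4}N^{-2(1-\alpha)}\log(N)^c\nu^{-1/2}$ from the previous steps leaves exactly $\log(N)^c\nu^{-1/2}N^{-\alpha/2}\prod_{i=1}^3\|f_i\|_{L^2}$, as claimed.

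The one place that needs genuine care rather than routine bookkeeping is the weight tracking when reversing the continuous approximation and re-periodizing the $L^3$-norms: one must verify, just as in Subsection~\ref{subsection:ContinuousApproximation1d}, that the Schwartz weight at scale $N^{2-\alpha}$ degenerates correctly into $w_{N^{-\alpha}}(t)\,w_{N^{1-\alpha}}(x_1)\,w_{N^{1-\alpha}}(x_2)$ and that periodization in $x$ contributes the stated power of $N$. An alternative to the Bernstein step would be to split the product by H\"older and use the short-time bilinear Strichartz estimate \eqref{eq:DeltaTimeInterval}; however, that would require extracting a dyadic frequency separation between two of the three factors, which the area hypothesis only delivers after a further Whitney-type decomposition, so the Bernstein route is cleaner and, since $N^{\alpha/2}\le N^{1/2}$, loses nothing.
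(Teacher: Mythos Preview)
Your proposal is correct and follows essentially the same route as the paper's proof: rescale and pass to the continuous approximation, apply Corollary~\ref{cor:DecouplingL3Transversality} with $R=N^{2-\alpha}$, reverse the approximation, and estimate each $N^{\alpha/2}$-cap by H\"older in time together with Bernstein's inequality before summing via $L^2$-orthogonality. Your factor tracking and your remark on the weight reversal match the paper's (brief) treatment.
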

\begin{proof}
The argument relies on continuous approximation and the decoupling estimate with sharp dependence on transversality from Corollary \ref{cor:DecouplingL3Transversality}. By the rescaling argument and spatial periodicity we obtain like above
\begin{equation*}
\begin{split}
\int_{[0,N^{-\alpha}] \times \T^2_\gamma)} \prod_{i=1}^3 \big| e^{it \Delta} f_i \big| &= N^{-4} \int_{[0,N^{2-\alpha}] \times N \T^2_\gamma} \prod_{i=1}^3 \big| \sum_{k_i \in \Z^2 / N} e^{i(k_i x - t k_i^2)} b_{i,k_i} \big| \\
&\lesssim N^{-4} N^{-2(1-\alpha)} \int_{B_{N^{2-\alpha}}} \prod_{i=1}^3 | \mathcal{E} \tilde{f}_i | dx dt.
\end{split}
\end{equation*}
We include a weight $w_{B_{N^{2-\alpha}}}^3$ and let $\mathcal{E} \tilde{f}_i \cdot w_{B_{N^{2-\alpha}}} = g_i$ to use Corollary \ref{cor:DecouplingL3Transversality} with $R = N^{2-\alpha}$ and $\tilde{\theta}_i$ of size $N^{-(1-\frac{\alpha}{2})} \times N^{-(1-\frac{\alpha}{2})} \times N^{-(2-\alpha)}$, which yields
\begin{equation*}
\int_{B_{N^{2-\alpha}}} \big| \prod_{i=1}^3 g_i \big| \lesssim \log(N)^c \nu^{-\frac{1}{2}} \prod_{i=1}^3 \big( \sum_{\tilde{\theta}_i} \| g_{i, \tilde{\theta}_i} \|^2_{L^3(w_{B_{N^{2-\alpha}}})} \big)^{\frac{1}{2}}.
\end{equation*}
We reverse the continuous approximation like previously to obtain\footnote{Here we gloss over the summation of the weights on the right hand side, which is handled like in previous sections.}
\begin{equation}
\label{eq:L3DecouplingTorus}
\int_{[0,N^{-\alpha}] \times \T^2_\gamma} \prod_{i=1}^3 |e^{it \Delta} f_i | \lesssim \log(N)^c \nu^{-\frac{1}{2}} \prod_{i=1}^3 \big( \sum_{\theta_i: N^{\frac{\alpha}{2}}-\text{ball}} \| e^{it \Delta} f_{i, \theta_i} \|^2_{L^3([0,N^{-\alpha}] \times \T^2_\gamma)} \big)^{\frac{1}{2}}.
\end{equation}
We estimate by H\"older's inequality in time and Bernstein's inequality for $\theta_i$ an $N^{\frac{\alpha}{2}}$-ball:
\begin{equation}
\label{eq:SmallBallBernsteinDecoupling}
\| e^{it \Delta} f_{i, \theta_i} \|_{L^3([0,N^{-\alpha}] \times \T^2_\gamma)} \lesssim N^{-\frac{\alpha}{3}} N^{\alpha \big( \frac{1}{2} - \frac{1}{3} \big)} \| f_{i, \theta_i} \|_{L^2(\T^2_\gamma)}.
\end{equation}
Plugging \eqref{eq:SmallBallBernsteinDecoupling} into \eqref{eq:L3DecouplingTorus} yields the claimed estimate.
\end{proof}
\subsubsection{An extrapolation conjecture}
In the following let $f_i : \R^2 \to \C$ with $\text{supp}(f_i) \subseteq B_2(0,1)$ for $i=1,2,3$ and define
\begin{equation*}
\text{Dec}_{p,R}(f) = \big( \sum_{\theta_i: R^{-\frac{1}{2}}-\text{ball}} \| \mathcal{E} f_{\theta_i} \|^2_{L^p(w_{B_R})} \big)^{\frac{1}{2}}.
\end{equation*}

We observe the following inequality from Plancherel's theorem and H\"older's inequality:
\begin{equation}
\label{eq:TrilinearDecouplingL2}
\big( \int_{B_3(0,R)} \prod_{i=1}^3 \big| \mathcal{E} f_i \big|^{\frac{2}{3}} \big)^{\frac{1}{2}} \lesssim \big( \prod_{i=1}^3  \big( \text{Dec}_{2,R}(f_i) \big)^{\frac{1}{3}}.
\end{equation}
In the previous paragraph we have seen that under the additional transversality assumption
\begin{equation}
\label{eq:TransversalityExtrapolation}
|\mathfrak{n}(\xi_1) \wedge \mathfrak{n}(\xi_2) \wedge \mathfrak{n}(\xi_3)| \gtrsim \nu
\end{equation}
for $\xi_i \in \text{supp}(f_i)$, the following $L^3$-decoupling estimate holds:
\begin{equation}
\label{eq:TrilinearDecouplingL3}
\big\| \prod_{i=1}^3 \big| \mathcal{E} f_i \big|^{\frac{1}{3}} \|_{L^3(B(0,R))} \lesssim \log(R)^c \nu^{-\frac{1}{6}} \big( \prod_{i=1}^3 \text{Dec}_{3,R}(f_i) \big)^{\frac{1}{3}}.
\end{equation}
This leads us to conjecture that under the transversality assumption \eqref{eq:TransversalityExtrapolation} it holds 
\begin{equation}
\label{eq:TrilinearDecouplingL4Transversality}
\big( \int_{B_R} \prod_{i=1}^3 \big| \mathcal{E} f_i \big|^{\frac{4}{3}} \big)^{\frac{1}{4}} \lesssim \log(R)^c \nu^{-\frac{1}{4}} \big( \prod_{i=1}^3 \text{Dec}_{4,R}(f_i) \big)^{\frac{1}{3}}.
\end{equation}

\section{Refined trilinear Strichartz estimates on rescaled tori}
\label{section:StrichartzEstimatesLargeTorus}

In this section we translate the previously proved Strichartz estimates on the unit torus for frequency-dependent times to the large $\lambda$-torus on the unit time scale. The latter is the common setting for the $I$-method.

\subsection{Preliminaries}

We use the following conventions: Let $\alpha_1 = 1$ and $\alpha_i \in (1/2,1]$ for $i=2,\ldots,d$. We endow the (possibly irrational) torus
\begin{equation*}
\T^d_\lambda := (\lambda \T)^d = \prod_{i=1}^d (\R / (2\pi \lambda \alpha_i \Z))
\end{equation*}
with Lebesgue measure, and we write
\begin{equation*}
\| f \|_{L^p(\T^d_\lambda)} = \| f \|_{L^p_\lambda} = \big( \int_{\T^d_\lambda} |f(x)|^p dx \big)^{\frac{1}{p}}
\end{equation*}
for $1 \leq p <\infty$ with the usual modification for $p= \infty$.
Note that we do not distinguish between rational and irrational tori to lighten the notation and since the following estimates are independent of rationality.

Denote $\Z_\lambda = \Z/\lambda$. The Fourier coefficients of $f \in L^1(\T^d_\lambda)$ are given by
\begin{equation*}
\hat{f}(k) = \int_{\T^d_\lambda} e^{- i k x} f(x) dx, \quad k \in (\Z_\lambda / \alpha_1) \times (\Z_\lambda / \alpha_2) \ldots \times (\Z_\lambda / \alpha_d) =: \Z_{\lambda}^d.
\end{equation*}
To be consistent with the notation in physical space, we suppress the $\alpha$-dependence in frequency space. The Fourier inversion formula is given by
\begin{equation*}
f(x) = \lambda^{-d} \sum_{k \in \Z^d_{\lambda}} e^{i k x} \hat{f}(k).
\end{equation*}
The prefactor normalizes the counting measure such that for $\lambda \to \infty$ the normalized counting measure weakly converges to Lebesgue measure. Moreover, Plancherel's theorem holds:
\begin{equation*}
\| f \|^2_{L^2_\lambda} = \frac{1}{\lambda^d} \sum_{k \in \Z_\lambda^d} |\hat{f}(k)|^2. 
\end{equation*}

We record the following improved linear Strichartz estimates, which follow from the unit-time Strichartz estimates on the unit torus (cf. \cite{Bourgain1993A,BourgainDemeter2015}) and the short-time Strichartz estimates without loss (cf. \cite{BurqGerardTzvetkov2004}) after rescaling:

\begin{proposition}
Let $\lambda \geq 1$, $d \in \{1,2\}$, $N \in 2^{\N}$, and $p=\frac{2(d+2)}{d}$. The following estimate holds:
\begin{equation*}
\| P_N U_\lambda(t) f \|_{L^p_{t,x}([0,1] \times \T^d_\lambda)} \lesssim L_d(\lambda,N) \| f \|_{L^2_\lambda}
\end{equation*}
with
\begin{equation}
\label{eq:RescaledLinearStrichartzConstant}
 L_d(\lambda,N) = 1 \text{ for } \lambda \geq N \text{, and } L_d(\lambda,N) = 
\begin{cases}
C_\varepsilon \log(N)^{2+\varepsilon}, &d=1 \\
C_\varepsilon N^\varepsilon, &d=2
\end{cases}
\text{ for } N \geq \lambda.
\end{equation}
\end{proposition}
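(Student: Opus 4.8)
The plan is to deduce both regimes from a single parabolic rescaling that reduces the estimate on $\T^d_\lambda$ over the unit time interval to a short-time estimate on the fixed unit torus $\T^d_1$, and then to distinguish cases according to whether the rescaled time interval is shorter or longer than the semiclassical time scale of the rescaled frequency.

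First I would rescale. Writing $U_\lambda(t)f = \lambda^{-d}\sum_{k\in\Z^d_\lambda}\widehat f(k)e^{i(kx-t|k|^2)}$, substituting $m=\lambda k$ (so that $m$ runs over the frequency lattice $\Z^d_1$ of $\T^d_1$ with the same parameters $\alpha_i$) and setting $\widehat g(m)=\widehat f(m/\lambda)$, a direct computation gives $(U_\lambda(\lambda^2 s)f)(\lambda y)=\lambda^{-d}(e^{is\Delta_{\T^d_1}}g)(y)$; the projection $P_N$ on $\T^d_\lambda$ corresponds to frequency localization of $g$ to the annulus $\{|m|\sim\lambda N\}$ on $\T^d_1$, which after covering by $O(1)$ dyadic shells I may take to be $g=P_Mg$ with $M\sim\lambda N$. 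Tracking the Jacobians and Plancherel's theorem on the two tori, and using that $p=\frac{2(d+2)}{d}$ is exactly the scaling-critical exponent so that all powers of $\lambda$ cancel, one gets $\|P_NU_\lambda(t)f\|_{L^p_{t,x}([0,1]\times\T^d_\lambda)}=\lambda^{-d/2}\|e^{is\Delta_{\T^d_1}}g\|_{L^p_{s,y}([0,\lambda^{-2}]\times\T^d_1)}$ and $\|P_Nf\|_{L^2_\lambda}=\lambda^{-d/2}\|g\|_{L^2(\T^d_1)}$. Hence the assertion is equivalent to
\begin{equation*}
\|e^{is\Delta_{\T^d_1}}P_Mg\|_{L^p_{s,y}([0,\lambda^{-2}]\times\T^d_1)}\lesssim L_d(\lambda,N)\,\|P_Mg\|_{L^2(\T^d_1)},\qquad M\sim\lambda N.
\end{equation*}

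Next I would split into the two regimes. If $\lambda\geq N$, then $\lambda^{-2}\leq(\lambda N)^{-1}\lesssim M^{-1}$, so $[0,\lambda^{-2}]$ lies in a time interval of length $\lesssim M^{-1}$, and the short-time Strichartz estimate without loss on the compact manifold $\T^d_1$ (the case $\alpha=1$ of \eqref{eq:ShorttimeStrichartzSEQIntroduction}, due to Burq--G\'erard--Tzvetkov \cite{BurqGerardTzvetkov2004}, with constant uniform in the torus parameters $\alpha_i\in(1/2,1]$) yields the bound with constant $O(1)$, i.e.\ $L_d(\lambda,N)=1$. If $N\geq\lambda$, then $\lambda^{-2}\leq1$, so $[0,\lambda^{-2}]\subseteq[0,1]$ and I would apply the unit-time estimate on $\T^d_1$ at frequency $M\sim\lambda N\leq N^2$: Theorem~\ref{thm:DiscreteStrichartzL6} gives for $d=1$ a constant $\lesssim_\varepsilon\log(M)^{2+\varepsilon}\lesssim_\varepsilon\log(N)^{2+\varepsilon}$, and Theorem~\ref{thm:LinearStrichartz} (the $L^4$-Strichartz estimate on $\T^2_\gamma$ coming from $\ell^2$-decoupling) gives for $d=2$ a constant $\lesssim_\varepsilon M^\varepsilon\lesssim_\varepsilon N^\varepsilon$ after redefining $\varepsilon$. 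This is precisely \eqref{eq:RescaledLinearStrichartzConstant}.

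I do not expect a genuine obstacle here: the estimate is scale-invariant, so it is literally equivalent to a short-time estimate on the fixed torus $\T^d_1$, and the content is just the bookkeeping of the rescaling together with the observation that $M\sim\lambda N\leq N^2$ in the regime $N\geq\lambda$, which converts the frequency-$M$ constants $\log(M)^{2+\varepsilon}$ and $M^\varepsilon$ into the stated ones. The single point deserving a sentence of care is that the estimates invoked on $\T^d_1$ hold with constants independent of the torus parameters $\alpha_i$, which is true because neither $\ell^2$-decoupling nor the Burq--G\'erard--Tzvetkov parametrix construction is sensitive to the precise (possibly irrational) torus geometry.
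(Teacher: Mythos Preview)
Your proposal is correct and follows essentially the same argument as the paper: rescale $x\to x/\lambda$, $t\to t/\lambda^2$ to reduce to a short-time estimate on the unit torus at frequency $M\sim\lambda N$, then invoke Burq--G\'erard--Tzvetkov when $\lambda\geq N$ (since $\lambda^{-2}\lesssim M^{-1}$) and the unit-time Strichartz estimates (Theorem~\ref{thm:DiscreteStrichartzL6} for $d=1$, $\ell^2$-decoupling for $d=2$) when $N\geq\lambda$, using $M\leq N^2$ to absorb the constants. The paper carries out the same rescaling with explicit powers of $\lambda$ rather than appealing to scaling criticality of $p$, but the content is identical.
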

\begin{proof}
We write
\begin{equation}
\label{eq:RescaledLinearStrichartz1}
\begin{split}
\| P_N U_\lambda(t) f \|^p_{L^p_{t,x}([0,1] \times \T^d_\lambda)} &= \big\| \big( \frac{1}{\lambda} \big)^d \sum_{\substack{|k| \sim N, \\ k \in \Z^d_\lambda}} e^{i (kx - tk^2)} a_k \big\|^p_{L^p_{t,x}([0,1] \times \T^d_\lambda)} \\
&= \lambda^{-d p} \big\| \sum_{\substack{|k'| \sim N \lambda, \\ k' \in \Z}} e^{i \big( \frac{k' x}{\lambda} - \frac{t (k')^2}{\lambda^2} \big)} a_{k'/\lambda} \big\|^p_{L^p_{t,x}([0,1]\times \T^d_\lambda)}.
\end{split}
\end{equation}
By change of variables $x' = x/\lambda$, $t' = t/\lambda^2$ we find
\begin{equation*}
\eqref{eq:RescaledLinearStrichartz1} = \lambda^{-dp} \lambda^{2+d} \big\| \sum_{\substack{|k'| \sim N \lambda, \\ k' \in \Z}} e^{i(k' x' - t' (k')^2)} a_{k'/\lambda} \big\|^p_{L^p_{t,x}([0,\lambda^{-2}] \times \T^d)}.
\end{equation*}
If $\lambda \geq N$, we can use the short-time Strichartz estimates due to Burq--G\'erard--Tzvetkov \cite{BurqGerardTzvetkov2004} without loss to find
\begin{equation*}
\lesssim \lambda^{-dp} \lambda^{2+d} \big( \sum_{\substack{|k'| \sim N \lambda, \\ k' \in \Z}} a_{k'/\lambda}^2 \big)^{\frac{p}{2}}.
\end{equation*}
This implies the claim taking $p$th root.

If $\lambda \leq N$, we can use the finite-time Strichartz estimates on the torus to obtain in case $d=1$:
\begin{equation*}
\begin{split}
&\quad \lambda^{-6} \lambda^3 \big\| \sum_{\substack{|k'| \sim N \lambda, \\ k' \in \Z }} e^{i (k' x' - t'(k')^2)} a_{k'} \big\|^6_{L^6_{t,x}([0,\lambda^{-2}] \times \T^d)} \\
&\lesssim \lambda^{-3} \big( \log(N \lambda)^{2+\varepsilon} \big)^6 \big( \sum_{\substack{|k'| \sim N \lambda, \\ k' \in \Z }} |a_{k'}|^2 \big)^3.
\end{split}
\end{equation*}

If $\lambda \leq N$ and $d=2$, we find
\begin{equation*}
\begin{split}
&\quad \lambda^{-8} \lambda^4 \big\| \sum_{\substack{|k'| \sim N \lambda, \\ k' \in \Z}} e^{i(k' x' - t'(k')^2)} a_{k'} \big\|^4_{L^4_{t,x}([0,\lambda^{-2}] \times \T^2)} \\
&\lesssim (\lambda N)^\varepsilon \| f \|_{L^2_\lambda}^4.
\end{split}
\end{equation*}

\end{proof}

In one dimension, we shall also use the following rescaled bilinear Strichartz estimate, which is \cite[Proposition~3.7]{DeSilvaPavlovicStaffilaniTzirakis2007}:
\begin{proposition}
\label{prop:RescaledBilinearStrichartz}
Let $\eta \in C^\infty_c([-2,2])$ and $\eta \equiv 1$ on $[-1,1]$. Let $f_i: \T^d_\lambda \to \C$ be $\lambda$-periodic functions whose Fourier transforms are supported on $\{ k: |k| \sim N_1 \}$ and $\{k: |k| \sim N_2 \}$ respectively with $N_1 \gg N_2$. Then
\begin{equation}
\label{eq:RescaledBilinearStrichartz1d}
\| \eta(t) U_\lambda(t) f_1 \, U_\lambda(t) f_2 \|_{L_t^2 L_x^2} \lesssim B_1(\lambda,N_1) \| f_1 \|_{L^2} \| f_2 \|_{L^2},
\end{equation}
where
\begin{equation}
\label{eq:RescaledBilinearConstant1d} 
B_1(\lambda,N_1) = 
\begin{cases}
1, \qquad &\text{if } N_1 \leq 1, \\
\big( \frac{1}{N_1} + \frac{1}{\lambda} \big)^{\frac{1}{2}}, \qquad &\text{if } N_1 > 1.
\end{cases}
\end{equation}
\end{proposition}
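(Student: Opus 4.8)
\textbf{Proof plan for Proposition \ref{prop:RescaledBilinearStrichartz}.}

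The plan is to reduce the rescaled bilinear estimate on $\T^d_\lambda$ to the known short-time bilinear Strichartz estimate on the unit torus by the same rescaling mechanism that was used throughout the paper. First I would pass to the Fourier side: writing $f_j(x) = \lambda^{-d} \sum_{k \in \Z^d_\lambda} e^{ikx} \hat{f}_j(k)$, the product $U_\lambda(t)f_1 \, U_\lambda(t)f_2$ becomes a double exponential sum, and the change of variables $x' = x/\lambda$, $t' = t/\lambda^2$, $k' = \lambda k$ converts $U_\lambda(t)f_j$ into $U(t')g_j$ on the unit torus $\T^d$, where $g_j$ has Fourier support in $\{|k'| \sim N_j \lambda\}$. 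Tracking the Jacobian factors $\lambda^{-d}$ from each Fourier expansion, $\lambda^{2+d}$ from the space-time change of variables, and the relation between $\|f_j\|_{L^2_\lambda}$ and $\|g_j\|_{L^2(\T^d)}$, one reduces \eqref{eq:RescaledBilinearStrichartz1d} to estimating $\| \eta(\lambda^2 t') U(t') g_1 \, U(t') g_2 \|_{L^2_{t',x'}}$ over $[0,\lambda^{-2}] \times \T^d$ (the cutoff $\eta$ restricts the original time to $|t| \lesssim 1$, hence $|t'| \lesssim \lambda^{-2}$).

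Next I would split into the two regimes appearing in \eqref{eq:RescaledBilinearConstant1d}. If $N_1 \gtrsim \lambda$, then $N_1 \lambda \gtrsim \lambda^2$, so the rescaled time interval $[0,\lambda^{-2}]$ is longer than $[0,(N_1\lambda)^{-1}]$; I would tile $[0,\lambda^{-2}]$ by $\sim \lambda^2/(N_1\lambda) = \lambda/N_1$ intervals of length $(N_1\lambda)^{-1}$ and apply the short-time bilinear estimate \eqref{eq:ShorttimeBilinearStrichartz} of Hani (here in its $\T^d$ form with high frequency $\sim N_1\lambda$) on each, then sum the $L^2$ pieces orthogonally in $t'$. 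This produces a constant $\big( (\lambda/N_1) \cdot (N_1\lambda)^{-1} \big)^{1/2} = \lambda^{-1} = (1/\lambda)^{1/2}\cdot(1/\lambda)^{1/2}$-type gain which, after restoring the $\lambda$-prefactors, yields $B_1(\lambda,N_1) \sim (1/N_1 + 1/\lambda)^{1/2} \sim (1/N_1)^{1/2}$ in this regime; in the complementary regime $N_1 \lesssim \lambda$ the single interval $[0,\lambda^{-2}]$ is already contained in $[0,(N_1\lambda)^{-1}]$, a single application of \eqref{eq:ShorttimeBilinearStrichartz} suffices, and unwinding gives the $(1/\lambda)^{1/2}$ term. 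The case $N_1 \leq 1$ is trivial from Bernstein/H\"older since then frequencies are $O(1)$. Since this is precisely \cite[Proposition~3.7]{DeSilvaPavlovicStaffilaniTzirakis2007}, I would also simply cite that reference for the bookkeeping.

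The main obstacle I anticipate is purely the careful tracking of the powers of $\lambda$ through the two Fourier normalizations, the space-time dilation, and the $L^2$-orthogonal summation over the tiling of the time interval, so that the two cases of $B_1(\lambda,N_1)$ emerge with the correct exponent $1/2$; there is no serious analytic difficulty beyond the short-time bilinear estimate, which is quoted. A minor subtlety is that the cutoff $\eta(t)$ rather than a sharp indicator must be handled, but since $\eta$ is supported in $[-2,2]$ one simply dominates $|\eta| \leq 1$ and uses $\text{supp}\,\eta \subseteq [-2,2]$ to localize $t'$ to $|t'| \lesssim \lambda^{-2}$, which is harmless. I would present this as a short proof deferring the computational details to the cited reference.
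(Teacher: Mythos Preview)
The paper does not actually prove this proposition; it is stated as a direct citation of \cite[Proposition~3.7]{DeSilvaPavlovicStaffilaniTzirakis2007}, so your plan to defer to that reference already matches the paper exactly.

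Your sketched argument via rescaling to the unit torus and invoking the short-time bilinear estimate \eqref{eq:ShorttimeBilinearStrichartz} is the right idea, but the bookkeeping you wrote contains two slips that you should fix if you decide to include details. First, the number of tiles of length $(N_1\lambda)^{-1}$ inside $[0,\lambda^{-2}]$ is $\lambda^{-2}/(N_1\lambda)^{-1}=N_1/\lambda$, not $\lambda/N_1$. Second, your regime conclusions are swapped: when $N_1\gtrsim\lambda$ one has $1/N_1\lesssim 1/\lambda$, so $(1/N_1+1/\lambda)^{1/2}\sim\lambda^{-1/2}$, and indeed summing $N_1/\lambda$ tiles each contributing $(N_1\lambda)^{-1}$ to the square gives $\lambda^{-2}$, i.e.\ $\lambda^{-1}$ on the unit-torus side, which becomes $\lambda^{-1/2}$ after undoing the scaling; conversely, when $N_1\lesssim\lambda$ a single application gives $(N_1\lambda)^{-1/2}$, which unwinds to $N_1^{-1/2}$. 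These are precisely the ``powers of $\lambda$'' issues you anticipated, so your decision to cite rather than compute is sound.
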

The constant in \eqref{eq:RescaledBilinearStrichartz1d} reflects that as $\lambda \geq N_1$ the periodic propagation behaves essentially like on Euclidean space,  and we recover the refined bilinear Strichartz estimates due to Bourgain \cite{Bourgain1998}.

In two dimensions, we have the corresponding result by Fan \emph{et al.} \cite{FanStaffilaniWangWilson2018}:
\begin{theorem}
Let $f_i \in L^2(\T^2_\lambda)$, $i=1,2$ with $\text{supp}(\hat{f}_i) \subseteq \{ k: |k| \sim N_i \}$, $i=1,2$, for some $N_1 \geq N_2 \gg 1$.
Then it holds:
\begin{equation*}
\| \eta(t) U_\lambda(t) f_1 \, U_\lambda(t) f_2 \|_{L^2_{t,x}} \lesssim_\varepsilon N_2^\varepsilon B_2(\lambda,N_1,N_2) \| f_1 \|_2 \| f_2 \|_2
\end{equation*}
with
\begin{equation*}
B_2(\lambda,N_1,N_2) = \big( \frac{1}{\lambda} + \frac{N_2}{N_1} \big)^{\frac{1}{2}}.
\end{equation*}
\end{theorem}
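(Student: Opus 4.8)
The plan is to reduce to a transverse bilinear configuration, and then split into two regimes according to whether, over the (unit) time window, the torus $\T^2_\lambda$ is large enough for the high-frequency wave packets to behave as on Euclidean space.

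First I would make the standard reductions. Normalize $\|f_i\|_2 = 1$. Decomposing $\hat f_1$ into pieces supported in balls of radius $N_2$ and using almost orthogonality (the pieces being disjoint, hence $\ell^2$-summable) together with Galilean invariance, it suffices to treat the case where $\hat f_1$ is supported in $B(\xi_1^*, N_2)$ with $|\xi_1^*|\sim N_1$ and $\hat f_2$ in $B(0, 2N_2)$; in particular the two Fourier supports have size $\sim N_2$ and are separated by $\gtrsim N_1$, a transverse configuration. Expanding both factors in Fourier series on $\T^2_\lambda$, taking the space-time Fourier transform and applying Plancherel, the square of the left-hand side is comparable to $\lambda^{-10}\sum_{\ell\in\Z_\lambda^2}\int_\R\big|\sum_{k_1+k_2=\ell}\hat\eta(\tau+|k_1|^2+|k_2|^2)\hat f_1(k_1)\hat f_2(k_2)\big|^2\,d\tau$, where the Schwartz weight $\hat\eta$ localizes $|k_1|^2+|k_2|^2$ to an $O(1)$-window; so the problem amounts to an $L^2$-averaged count of near-solutions of $k_1+k_2=\ell$, $|k_1|^2+|k_2|^2=\text{const}$ with the annulus and separation constraints.

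In the Euclidean regime $\lambda\gtrsim N_1$, a wave packet of frequency $\sim N_1$ translates a distance $\lesssim N_1\lesssim\lambda$ over the time window $|t|\le 2$, hence does not wrap around $\T^2_\lambda$; by a finite-propagation/Poisson-summation argument the estimate then transfers from the Euclidean bilinear restriction estimate $\|e^{it\Delta}g_1\,e^{it\Delta}g_2\|_{L^2(\R^{2+1})}\lesssim(N_2/N_1)^{1/2}\|g_1\|_2\|g_2\|_2$ (valid for the above supports by transversality), which yields the $(N_2/N_1)^{1/2}$ part of $B_2$ with no $N_2^\varepsilon$ loss. In the arithmetic regime $\lambda\lesssim N_1$ the wave packets wrap and the Euclidean argument is unavailable; instead, after a suitable rescaling I would apply continuous approximation exactly as in Sections~\ref{section:Preliminaries}--\ref{section:TrilinearStrichartz2d} to replace the exponential sums by Fourier extension operators over a ball, and then use $\ell^2$-decoupling for the truncated paraboloid restricted to the two transverse caps, decoupled as far as the discreteness of $\Z_\lambda^2$ permits — namely down to scale $\lambda^{-1}$. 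For two transverse pieces the bilinear $L^2$-norm is, up to the subpolynomial decoupling loss $\sim N_2^\varepsilon$, controlled by the $\ell^2$-sum of the single-cap contributions (local $L^2$-orthogonality via Plancherel, using the $\gtrsim N_1$-separation so that the output frequencies of $g_{1,\theta}g_2$ are essentially disjoint in $\theta$); estimating each single-cap term by Plancherel and Bernstein and reassembling the $\ell^2$-sums into $\|f_1\|_2\|f_2\|_2$ produces the bound $\lesssim_\varepsilon N_2^\varepsilon\lambda^{-1/2}$, the gain capping at $\lambda^{-1/2}$ precisely because decoupling cannot proceed below the lattice scale $\lambda^{-1}$. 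Combining the two regimes gives $\lesssim_\varepsilon N_2^\varepsilon(\lambda^{-1}+N_2/N_1)^{1/2}$, and undoing the initial almost-orthogonal decomposition recovers the claim.

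The hard part is the arithmetic regime, and specifically getting the sharp subpolynomial loss $N_2^\varepsilon$ uniformly in $\lambda$ and in $N_2/N_1$. Naively chopping $[0,1]$ into the $\sim N_1/\lambda$ subintervals of length $(N_1\lambda)^{-1}$ on which the fixed-time bilinear estimate of Hani (Theorem~\ref{thm:BilinearStrichartz2d}) applies, and summing, loses a full power of $N_2$; equivalently, a direct count of $\Z_\lambda^2$-lattice points in the relevant thin curved annulus is governed by divisor/sum-of-two-squares bounds that produce a power of $N_2$ (or an $N_1^\varepsilon$). Routing through $\ell^2$-decoupling — an $L^2$-averaged statement, hence strictly stronger than pointwise lattice-point counting — is what replaces these powers by the $N_2^\varepsilon$ decoupling constant. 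The remaining difficulty is purely bookkeeping: tracking the rescalings and continuous-approximation factors so that the powers of $\lambda$ cancel and leave exactly $B_2(\lambda,N_1,N_2)=(\lambda^{-1}+N_2/N_1)^{1/2}$.
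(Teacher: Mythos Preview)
The paper does not prove this theorem: it is stated as a result of Fan--Staffilani--Wang--Wilson \cite{FanStaffilaniWangWilson2018} and cited without proof, so there is no argument in the paper to compare your proposal against.

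Your outline is nonetheless in the spirit of the Fan--Staffilani--Wang--Wilson proof: almost-orthogonal reduction to a transverse high--low configuration, a Euclidean regime handled by the bilinear restriction estimate, and an arithmetic regime handled via continuous approximation and $\ell^2$-decoupling (this is indeed where the $N_2^\varepsilon$ loss originates, and why the estimate is uniform over irrational $\T^2_\lambda$). Your diagnosis of why the naive interval-chopping based on Hani's short-time bilinear estimate loses a power of $N_2$ is also correct.

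One point that would need tightening in an actual write-up: the heuristic ``decouple down to the lattice scale $\lambda^{-1}$, hence the gain caps at $\lambda^{-1/2}$'' is not literally how the argument runs. After rescaling to unit frequencies and passing to a ball of size $R$ determined by the time window and $N_1$, the relevant decoupling scale is $R^{-1/2}$, and the $\lambda^{-1/2}$ bound emerges from the interplay between the size of that ball (set by periodicity in space on $\T^2_\lambda$) and the cap size, not directly from the lattice spacing. In the cited work the bilinear expression is controlled, after the Galilean reduction, essentially by a \emph{linear} $L^4$-decoupling applied to the high-frequency factor (the low factor being absorbed), which makes the bookkeeping cleaner than the bilinear-decoupling picture you sketch. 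Your endpoint is right, but the mechanism deserves a more careful statement than ``decoupling stops at $\lambda^{-1}$.''
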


On the square torus the result was already proved by De Silva \emph{et al.} \cite[Proposition~4.6]{DeSilvaPavlovicStaffilaniTzirakis2007}; see also Burq--G\'erard--Tzvetkov \cite{BurqGerardTzvetkov2005}. We record the following consequence by Galilean invariance:
\begin{corollary}
Under the above assumptions, if additionally $N_3 \leq N_2 \ll N_1$, and $\text{supp}(\hat{f}_2) \subseteq \{ k: |k| \sim N_2 \} \cap B(\xi_2^*,N_3)$ for some $\xi_2^* \in B(0,N_2)$, we have
\begin{equation*}
\| \eta(t) \, U_\lambda(t) f_1 \, U_\lambda(t) f_2 \|_{L^2_{t,x}} \lesssim_\varepsilon N_3^\varepsilon B_2(\lambda,N_1,N_3) \| f_1 \|_2 \| f_2 \|_2.
\end{equation*}
\end{corollary}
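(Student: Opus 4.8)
The plan is to derive this corollary from the preceding theorem of Fan \emph{et al.} by a Galilean boost that removes the additional concentration of $\hat{f}_2$ inside the ball $B(\xi_2^*,N_3)$. First I would recall that the Schr\"odinger propagator on the rescaled torus satisfies the Galilean identity
\begin{equation*}
U_\lambda(t)(e^{i\xi_0 x} g)(x) = e^{i(\xi_0 x - t|\xi_0|^2)} \big( U_\lambda(t) g \big)(x - 2t\xi_0),
\end{equation*}
valid for $\xi_0 \in \Z^2_\lambda$ so that $e^{i\xi_0 x}$ is genuinely $\lambda$-periodic (here one takes $\xi_0$ to be the center $\xi_2^*$, or the nearest lattice point to it, at the negligible cost of translating the ball by $O(1/\lambda)$, which is harmless). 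Applying this to $f_2 = e^{i\xi_2^* x} g_2$ with $\text{supp}(\hat g_2) \subseteq B(0,N_3)$ turns the bilinear expression into
\begin{equation*}
\big| U_\lambda(t) f_1 \big|(x) \cdot \big| U_\lambda(t) g_2 \big|(x - 2t\xi_2^*),
\end{equation*}
so after the (measure-preserving) change of variables $x \mapsto x + 2t\xi_2^*$ in the $L^2_{t,x}$-integral — which is legitimate because the domain is all of $\T^2_\lambda$ and the map is a shear — the pure phase $e^{i(\xi_2^* x - t|\xi_2^*|^2)}$ drops out in modulus, and we are left with $\| \eta(t) U_\lambda(t)\tilde f_1 \, U_\lambda(t) g_2 \|_{L^2_{t,x}}$ for a suitably sheared $\tilde f_1$ whose Fourier support still lies in $\{|k| \sim N_1\}$ (a shear in physical space only translates frequency support, and in fact since we measure only the modulus one can equally well absorb everything into $f_1$).

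The second step is simply to invoke the Fan \emph{et al.} theorem for the pair $(\tilde f_1, g_2)$: now $\text{supp}(\hat g_2) \subseteq \{|k| \sim N_3'\}$ for the dyadic scales $N_3' \lesssim N_3$ appearing in a Littlewood--Paley decomposition of $g_2$, so the theorem yields the bound $N_3^\varepsilon \big( \tfrac{1}{\lambda} + \tfrac{N_3}{N_1}\big)^{1/2}\|\tilde f_1\|_2 \|g_2\|_2$. Since $\|\tilde f_1\|_2 = \|f_1\|_2$ and $\|g_2\|_2 = \|f_2\|_2$ by Plancherel (the Galilean boost and the shear are $L^2$-isometries on $\T^2_\lambda$), and since $B_2(\lambda,N_1,N_3) = \big(\tfrac{1}{\lambda}+\tfrac{N_3}{N_1}\big)^{1/2}$, this is exactly the claimed estimate. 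One minor bookkeeping point: the hypothesis of the quoted theorem asks for exact dyadic localization of $\hat f_2$ at a single scale, whereas $g_2$ is merely supported in a ball of radius $N_3$; this is resolved by decomposing $g_2 = \sum_{M \lesssim N_3} P_M g_2$ into $O(\log N_3)$ pieces, applying the theorem to each, and summing via the triangle inequality and Cauchy--Schwarz — the $\log N_3$ loss is absorbed into $N_3^\varepsilon$. Alternatively, since $B_2$ is monotone in the relevant parameters, one can note that $\{|k|\lesssim N_3\}$ behaves like $\{|k|\sim N_2'\}$ with $N_2' = N_3$ up to the same logarithmic cost.

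The step I expect to require the most care is verifying that the shear change of variables $x \mapsto x + 2t\xi_2^*$ genuinely respects the $\lambda$-periodic structure and that $\eta(t) \equiv 1$ on $[-1,1]$ is preserved — i.e., that after the boost the relevant time cutoff is still the standard bump used in the Fan \emph{et al.} statement. Since $\eta$ is a function of $t$ alone and the shear does not touch $t$, this is immediate; and the shear $(x,t)\mapsto(x+2t\xi_2^*, t)$ has Jacobian $1$ and maps $\T^2_\lambda \times \R$ to itself bijectively because translation by any vector is an automorphism of $\T^2_\lambda$. So in fact there is no real obstacle here — the corollary is a soft consequence of Galilean invariance — and the only thing to be scrupulous about is the elementary frequency-support accounting after the boost, together with the innocuous lattice-point rounding of $\xi_2^*$ to make $e^{i\xi_2^* x}$ periodic. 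I would therefore present the argument compactly, emphasizing the Galilean identity and the isometry property, and relegating the dyadic-summation remark to a single sentence.
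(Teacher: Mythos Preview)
Your proposal is correct and is precisely the argument the paper has in mind: the corollary is stated in the paper as a direct ``consequence by Galilean invariance'' with no further proof given, and you have spelled out exactly that Galilean-boost reduction to the Fan \emph{et al.} theorem, together with the routine dyadic and lattice-rounding bookkeeping. There is nothing to add.
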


\subsection{The one-dimensional case}

With linear and bilinear Strichartz estimates on rescaled tori at hand, we show the following, from which trilinear refinements follow:
\begin{proposition}
Let $\lambda \geq 1$, $N_i \in 2^{\N}, \, i=1,2$, $N_1 \gg N_2$ and $N_1 \geq N_2^{1+\beta}$ for some $\beta \in (0,1]$. Suppose that $\lambda \leq N_1$ and $\text{supp} (\hat{f}_i) \subseteq \{ \xi \in \R : |\xi| \sim N_i \}$. Then we have
\begin{equation}
\label{eq:TrilinearImprovedStrichartz}
\int_{[0,1] \times \T_{\lambda}} | U_\lambda(t) f_1|^2 | U_\lambda(t) f_2 |^4 dt dx \lesssim  B_1(\lambda,N_1)^{\frac{\beta}{1+\beta}} L_1(\lambda,N_1) \| f_1 \|_2^2 \| f_2 \|_2^4
\end{equation}
with $B_1$ defined in \eqref{eq:RescaledBilinearConstant1d} and $L_1$ defined in \eqref{eq:RescaledLinearStrichartzConstant}.
\end{proposition}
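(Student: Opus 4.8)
The plan is to rerun the proof of Theorem~\ref{thm:ImprovedStrichartzQuantified} on the $\lambda$-torus with the unit time interval in place of the unit torus with a frequency-dependent time interval. Rescaling $x\mapsto N_1 x$, $t\mapsto N_1^2 t$ turns the exponential sums on $\T_\lambda\times[0,1]$ into sums on $(\R/2\pi N_1\lambda\Z)\times[0,N_1^2]$ with frequencies of size $O(1)$ and lattice spacing $(N_1\lambda)^{-1}$; since $\lambda\le N_1$ we have $N_1^2\ge N_1\lambda$, so spatial periodicity lets us pass to the space--time ball $B_{N_1^2}$, and the continuous approximation (Subsection~\ref{subsection:ContinuousApproximation1d}) then puts us in position to run the asymmetric bilinear decoupling iteration of \cite{Li2020,GuoLiYungZorinKranich2021}. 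The two Euclidean inputs used at the exit of the iteration in Section~\ref{section:ProofTheoremB} are replaced by their $\T_\lambda$-counterparts: the rescaled $L^6$-Strichartz estimate with constant $L_1(\lambda,N_1)$ for the linear factors, and Proposition~\ref{prop:RescaledBilinearStrichartz} with constant $B_1(\lambda,N_1)$ for the bilinear factor.

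First I would carry out the reductions. After normalising $\|f_i\|_{L^2_\lambda}=1$, decompose $f_1=\sum_j f_{1,j}$ into pieces with Fourier support in intervals of length $N_2$ (there are $\sim N_1/N_2$ of them, and $N_2\le N_1^{1/(1+\beta)}$ by hypothesis). Introducing a time cutoff $\eta\equiv 1$ on $[0,1]$, the space--time Fourier supports of the functions $\eta(t)\,U_\lambda(t)f_{1,j}\,(U_\lambda(t)f_2)^2$ lie in vertical slabs $\{|\xi-c_j|\lesssim N_2\}$ with $N_2$-separated centres, hence have bounded overlap; $L^2$-orthogonality then reduces matters to bounding $\int_{[0,1]\times\T_\lambda}|U_\lambda f_{1,j}|^2|U_\lambda f_2|^4$ for a single $j$ and summing in $\ell^2$. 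Thus we may assume $\operatorname{supp}\hat f_1\subseteq I_1$ with $|I_1|\le N_1^{1/(1+\beta)}$, while $\operatorname{dist}(\operatorname{supp}\hat f_1,\operatorname{supp}\hat f_2)\gtrsim N_1$ is automatic since $N_1\gg N_2$. Note that, unlike in Theorem~\ref{thm:ImprovedStrichartzQuantified}, there is no preliminary H\"older reduction from a triple product: the expression is already in the bilinear form $\big\||U_\lambda f_1||U_\lambda f_2|^2\big\|_{L^2}^2$.

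Next I would run the decoupling iteration on $\int_{B_{N_1^2}}|g_{I_1'}|^2|g_{I_2'}|^4$, where $g_i=\mathcal E\tilde f_i\cdot w_{B_{N_1^2}}$, $I_i'=I_i/N_1$ of length $\le N_1^{-\beta/(1+\beta)}$ and $\operatorname{dist}(I_1',I_2')\gtrsim 1$, exactly as in Section~\ref{section:ProofTheoremB}. Each iteration squares the interval scale; the finest scale reachable on $B_{N_1^2}$ is $\delta=N_1^{-1}$, i.e.\ intervals of length $1$ in the original frequency variable, so $m=\lceil\log_2(\beta^{-1}+1)\rceil$ iterations suffice. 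Reversing scaling and continuous approximation, each linear factor $\|g_I\|_{L^6(w_{B_{N_1^2}})}$ is controlled (up to an explicit power of $N_1,\lambda$ recovered later) by the rescaled $L^6$-Strichartz estimate with constant $L_1(\lambda,N_\cdot)\le L_1(\lambda,N_1)$; as in Theorem~\ref{thm:ImprovedStrichartzQuantified} the corresponding exponents form a finite geometric sum, so the total contribution is a fixed power of $L_1(\lambda,N_1)$. At the bottom one reaches $\int(|g_{\tilde I_1}|^2*\phi_{\tilde I_1})(|g_{\tilde I_2}|^4*\phi_{\tilde I_2})$ with $|\tilde I_1|=|\tilde I_2|=N_1^{-1}$ and $\operatorname{dist}\gtrsim 1$; reversing scaling, a pure phase shift is absorbed (leaving $L^2_\lambda$-norms unchanged) and H\"older's inequality bounds this by $\|U_\lambda h_2\|_{L^\infty_{t,x}}^2\,\|U_\lambda h_1\,U_\lambda h_2\|_{L^2([0,1]\times\T_\lambda)}^2$, with $h_1,h_2$ Fourier supported in length-$1$ intervals at heights $\sim N_1$ and $\sim N_2$. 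The key point is that a length-$1$ interval on $\T_\lambda$ carries only $\sim\lambda$ lattice points, so Bernstein is lossless, $\|U_\lambda h_2\|_{L^\infty_{t,x}}\lesssim\|h_2\|_{L^2_\lambda}$, while Proposition~\ref{prop:RescaledBilinearStrichartz} gives $\|U_\lambda h_1\,U_\lambda h_2\|_{L^2([0,1]\times\T_\lambda)}\lesssim B_1(\lambda,N_1)\|h_1\|_{L^2_\lambda}\|h_2\|_{L^2_\lambda}$. Hence the bilinear exit produces the full factor $B_1(\lambda,N_1)^2$.

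Finally, the bilinear gain $B_1(\lambda,N_1)^2$ sits at the innermost of $m$ nested square roots, so it is raised to the power $2^{-m}$; since $m=\lceil\log_2(\beta^{-1}+1)\rceil$ gives $2^m<2(\beta^{-1}+1)$ and $B_1(\lambda,N_1)\le 1$, one gets $(B_1^2)^{2^{-m}}\le B_1(\lambda,N_1)^{\beta/(1+\beta)}$. Combining with the $L_1$-factors, checking that all explicit $N_1,\lambda$-powers cancel (by design, as in Subsection~\ref{subsection:ContinuousApproximation1d}), and summing the $\ell^2$-decomposition of $f_1$ yields \eqref{eq:TrilinearImprovedStrichartz}. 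The main obstacle is the $(N_1,\lambda)$-bookkeeping through rescaling, periodicity, and continuous approximation: one must verify that the weight $w_{B_{N_1^2}}$ is respected at every decoupling step and, in particular, that the decoupling scale forced by the box ($\delta=N_1^{-1}$, itself forced by $\lambda\le N_1$) is precisely the scale at which Bernstein on $\T_\lambda$ is lossless, so that the bilinear exit genuinely yields $B_1^2$ without a compensating loss. A secondary point is confirming that the orthogonality in the reduction step is genuine (bounded overlap of space--time Fourier supports) rather than merely logarithmic.
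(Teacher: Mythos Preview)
Your proposal is correct and follows essentially the same route as the paper's own proof: almost orthogonality to reduce $f_1$ to an $N_2$-interval, rescaling and continuous approximation to the ball $B_{N_1^2}$ (using $\lambda\le N_1$), the asymmetric bilinear decoupling iteration of Section~\ref{section:ProofTheoremB} down to scale $N_1^{-1}$, linear exit via the rescaled $L^6$-Strichartz estimate with constant $L_1(\lambda,N_1)$, bilinear exit via Proposition~\ref{prop:RescaledBilinearStrichartz} together with lossless Bernstein on a length-$1$ interval, and the final bookkeeping $B_1^{2\cdot 2^{-m}}\le B_1^{\beta/(1+\beta)}$. Your remarks on the bookkeeping obstacles (weight compatibility, matching the decoupling scale with the lossless Bernstein scale) and on the genuine $L^2$-orthogonality of the initial reduction are exactly the points the paper handles, in the same way.
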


\begin{remark}
\eqref{eq:TrilinearImprovedStrichartz} still holds for $\lambda \geq N_1$, but is inferior to the estimate obtained from directly using the bilinear Strichartz estimate together with Bernstein's inequality.
Let $N_2 = N_1^{\frac{1}{1+\beta}}$ and $\lambda \geq N_1$. We find by H\"older's inequality
\begin{equation}
\label{eq:DirectEstimate}
\int_{[0,1] \times \T_\lambda} |U_\lambda(t) f_1|^2 |U_\lambda(t) f_2|^4 dx dt \lesssim B_1(\lambda,N_1)^2 N_2 \| f_1 \|_2^2 \| f_2 \|_2^4 \lesssim N_1^{-\frac{\beta}{1+\beta}} \| f_1 \|_2^2 \| f_2 \|_2^4.
\end{equation}
This strengthens \eqref{eq:TrilinearImprovedStrichartz}, which yields
\begin{equation*}
\int_{[0,1] \times \T_\lambda} |U_\lambda(t) f_1|^2 |U_\lambda(t) f_2|^4 dx dt \lesssim N_1^{-\frac{\beta}{2(1+\beta)}} \prod_{i=1}^3 \| f_1 \|_2^2 \| f_2 \|_2^4.
\end{equation*}
Note that this also shows that the trilinear refined estimate does not give an improvement on Euclidean space.

On the other hand, if $N_2 = N_1^{\frac{1}{1+\beta}}$ and $\lambda \leq N_1$, \eqref{eq:DirectEstimate} becomes
\begin{equation}
\label{eq:DirectEstimate2}
\int_{[0,1] \times \T_\lambda} |U_\lambda(t) f_1|^2 |U_\lambda(t) f_2|^4 dx dt \lesssim \frac{N_2}{\lambda} \| f_1 \|_2^2 \| f_2 \|_2^4.
\end{equation}
Compare this to \eqref{eq:TrilinearImprovedStrichartz}, which yields
\begin{equation}
\label{eq:TrilinearEstimate2}
\int_{[0,1] \times \T_\lambda} |U_\lambda(t) f_1|^2 |U_\lambda(t) f_2|^4 dx dt \lesssim \lambda^{-\frac{\beta}{2(1+\beta)}} \log(N_1)^c \| f_1 \|_2^2 \| f_2 \|_2^4.
\end{equation}
\eqref{eq:TrilinearEstimate2} improves on \eqref{eq:DirectEstimate2} once
\begin{equation*}
N_1 \log(N_1)^c \lesssim \lambda^{\frac{1}{2}}.
\end{equation*}
\end{remark}

\begin{proof}
The proof is essentially a reprise of Sections \ref{section:Preliminaries} and \ref{section:ProofTheoremB}, and we shall be brief. By almost orthogonality, we can suppose that $f_1$ is Fourier supported in an interval of length $N_2$ and $\| f_j \|_2 = 1$ for $j=1,2$. 

We use the continuous approximation and rescale to $N_1^2 \times \lambda N_1$. This yields a scaling factor of $N_1^{-3}$. Via $\lambda$-periodicity the $x$-domain is extended to $N_1^2$, and we find:
\begin{equation*}
\int_{[0,1] \times \T_\lambda} |U_\lambda(t) f_1|^2 |U_\lambda(t) f_2|^4 dx dt \lesssim N_1^{-4} \lambda \int_{B_{N_1^2}} |\mathcal{E} \tilde{f}_1|^2 |\mathcal{E} \tilde{f}_2|^4 dx dt.
\end{equation*}
We dominate $1_{B_{N_1^2}} \lesssim w_{B_{N_1^2}}^6$ with a suitable Schwartz function, which is compactly supported in Fourier space to find
\begin{equation*}
\lesssim N_1^{-4} \lambda \int_{\R^2} |g_{I_1}|^2 |g_{I_2}|^4 dx dt
\end{equation*}
with notations like above: $I_j$ denote $1$-separated intervals in $[0,1]$ with length less than $\tilde{\beta} =N_2/N_1 = N_1^{-\frac{\beta}{1+\beta}}$. Moreover, the functions $g_{I_j}$ have Fourier support in the $N_1^{-2}$-neighbourhood of the paraboloid $\{ (\xi,-\xi^2) : \xi \in I_j \}$. We invoke the decoupling iteration (after changing notations) to find:

\begin{equation}
\label{eq:DecouplingIterationRescaled}
\begin{split}
&\quad \int |g_{I_1}|^2 |g_{I_2}|^4 \\
 &\leq C_1 \| g_{I_2} \|_6^3 \sum_{|I_1^2| = \tilde{\beta}^2} \big( \int \big( |g_{I^2_1}|^4 * \phi_{I_1} \big) \big( |g_{I_2}|^2 * \phi_{I_2} \big) \big)^{\frac{1}{2}} \\
&\leq C_1^{\frac{3}{2}} \| g_{I_2} \|^3_6 \sum_{|I_1^2| = \tilde{\beta}^2} \big( \sum_{|I_2^2| = \tilde{\beta}^4} \int \big( |g_{I_1^2}|^4 * \phi_{I_1^2} \big) \big( |g_{I_2^2}|^2 * \phi_{I_2^2} \big) \big)^{\frac{1}{2}} \\
&\; \vdots \\
&\leq C_1^2 \| g_{I_2} \|^3_{L^6} \sum_{|I_1^2| = \tilde{\beta}^2} \| g_{I_1^2} \|_{L^6}^{\frac{3}{2}} \\
&\quad \times \big( \sum_{|I_2^2| = \tilde{\beta}^4} \| g_{I_2^2} \|_{L^6}^{\frac{3}{2}} \big( \sum_{|I_1^3| = \tilde{\beta}^4} \| g_{I_1^3} \|_{L^6}^{3/2} \ldots \big( \int |g_{I_1^{m/2}}|^2 * \phi_{I_1^{m/2}} |g_{I_2^{m/2}}|^4 * \phi_{I_2^{m/2}} \big)^{\frac{1}{2}} \big)^{\frac{1}{2}} \ldots \big)^{\frac{1}{2}}.
\end{split}
\end{equation}
The number of decoupling iterations is estimated by $m = \lceil \log_2(\beta^{-1} + 1) \rceil$ (which we suppose to be even to simplifiy notations) because
\begin{equation*}
N_1^{-\frac{\beta}{1+\beta}2^m} \leq N_1^{-\frac{\beta}{1+\beta} (1+\beta^{-1})} = N_1^{-1}.
\end{equation*}

We estimate the linear and bilinear contributions like in Section \ref{section:ProofTheoremB} by reversing the continuous approximation and using the Strichartz estimates on large tori:
For the linear part we have
\begin{equation}
\label{eq:LinearExitRescaled}
\| g_I \|_{L^6(\R^2)} \lesssim \big( \frac{N_1^4}{\lambda} \big)^{\frac{1}{6}} \| U_\lambda(t) \tilde{f}_I \|_{L_t^6([0,1], L^6_x(\T_\lambda))} \lesssim  \big( \frac{N_1^4}{\lambda} \big)^{\frac{1}{6}} L_1(\lambda,N_1) \| b_I \|_{\ell^2_\lambda}.
\end{equation}

Like in Section \ref{section:ProofTheoremB}, regarding the expression obtained after decoupling we abandon all multilinearity to find\footnote{Here we gloss over the technical issue of estimating the convolution with $L^1$-normalized bump function, which can be handled like in Section \ref{section:ProofTheoremB}.}
\begin{equation*}
\int |g_{I_1^{m/2}}|^2 * \phi_{I_1^{m/2}} |g_{I_2^{m/2}}|^4 * \phi_{I_2^{m/2}} \lesssim \int |g_{I_1^{m/2}}|^2 |g_{I_2^{m/2}}|^2 dt dx \; \| g_{I_2^{m/2}} \|_{L^\infty_{t,x}}^2
\end{equation*}
and now scale back, use periodicity, and the estimate from Proposition \ref{prop:RescaledBilinearStrichartz} together with Bernstein's inequality. Note that Bernstein's inequality does not lose derivatives in the present context. We find
\begin{equation}
\label{eq:BilinearExitRescaled}
\int |g_{I_1^{m/2}}|^2 |g_{I_2^{m/2}}|^2 dt dx \; \| g_{I_2^{m/2}} \|_{L^\infty_{t,x}}^2 \lesssim \big( \frac{N_1^4}{\lambda} \big) B_1^2(\lambda,N_1) \| b_{I_1^{m/2}} \|_{\ell^2_\lambda}^2 \| b_{I_2^{m/2}} \|_{\ell^2_\lambda}^4.
\end{equation}
Plugging \eqref{eq:LinearExitRescaled} and \eqref{eq:BilinearExitRescaled} into \eqref{eq:DecouplingIterationRescaled}, we find
\begin{equation*}
\int |g_{I_1}|^2 |g_{I_2}|^4 dt dx \leq C_1^2 \big( \frac{N_1^4}{\lambda} \big) B_1(\lambda,N_1)^{1/2^{m-1}} \| b_{I_1} \|_{\ell^2_\lambda}^2 \| b_{I_2} \|^4_{\ell^2_\lambda}.
\end{equation*}
Since $B_1(\lambda,N_1)^{1/2^{m-1}} \leq B_1(\lambda,N_1)^{\frac{\beta}{1+\beta}}$, this proves \eqref{eq:TrilinearImprovedStrichartz} in case $N_1 \geq \lambda$.
\end{proof} 

\subsection{The two-dimensional case}

In this section we show the following variant of Theorem \ref{thm:TrilinearStrichartzEstimates2d}:
\begin{theorem}
\label{thm:RescaledEstimate2d}
Let $\lambda \geq 1$, $N_i \in 2^{\N}$, $i=1,2,3$, $N_1 \geq N_2$, $N_1 \geq N_3$, and $N_1 \geq N_3^{1+\beta}$ for some $\beta \in (0,1]$. Suppose that $\lambda \leq N_1$, $\text{supp} (\hat{f}_i) \subseteq \{ \xi \in \R : |\xi| \sim N_1 \}$, and $\text{supp} (\hat{f}_i) \subseteq B(\xi_i^*,N_3)$ for some $\xi_i^* \in \R^2$ with $|\xi_i^*| \leq 2N_1$. Suppose that $\xi_i \in \text{supp}(\hat{f}_i)$ satisfy the transversality assumption \eqref{eq:Transversality3d}.

Then the following estimate holds:
\begin{equation}
\label{eq:TrilinearImprovedStrichartz2d}
\int_{[0,1] \times \T_{\lambda}} \prod_{i=1}^3 | U_\lambda(t) f_i|^{\frac{4}{3}} dx dt \lesssim_\varepsilon N_1^\varepsilon \nu^{-1+\frac{\beta}{2(1+\beta)}} B_2(\lambda,N_1,1)^{\frac{2 \beta}{3(1+\beta)}}  \big( \prod_{i=1}^3 \| f_i \|_{L^2_\lambda} \big)^{\frac{4}{3}}
\end{equation}
with $B_2$ defined in \eqref{eq:RescaledBilinearConstant1d}.
\end{theorem}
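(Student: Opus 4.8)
The plan is to replay the argument of Section~\ref{section:TrilinearStrichartz2d} with the short-time interval $[0,N_1^{-\alpha}]$ on $\T^2$ replaced by the unit interval $[0,1]$ on $\T^2_\lambda$, so that the role of the gain $N_1^{-\alpha/2}$ coming from short-time bilinear Strichartz is taken over by the rescaled bilinear constant $B_2(\lambda,N_1,1)\approx\lambda^{-1/2}$. First I would normalize $\|f_i\|_{L^2_\lambda}=1$ and observe that the Fourier supports are already contained in balls of radius $N_3$, so no further orthogonal splitting is needed. Rescaling $x\mapsto N_1 x$, $t\mapsto N_1^2 t$ turns the integral over $[0,1]\times\T^2_\lambda$ into one over $[0,N_1^2]\times N_1\T^2_\lambda$; since $\lambda\le N_1$ the spatial period $N_1\lambda$ is $\le N_1^2$, so $\lambda$-periodicity in $x$ extends the spatial domain to a cube of side $N_1^2$ at the cost of a factor $(N_1/\lambda)^2$, and tiling the resulting space-time box by $O(1)$ balls $B_{N_1^2}$ and introducing the Schwartz weight $w_{B_{N_1^2}}^{4/3}$ as in Subsection~\ref{subsection:ContinuousApproximation1d} reduces the problem, after continuous approximation, to an estimate for $\dashint_{B_{N_1^2}}|\tilde F_1\tilde F_2\tilde F_3|^{4/3}$ times an explicit prefactor in $N_1$ and $\lambda$. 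Here $\tilde F_i$ is Fourier supported in $\mathcal{N}_{N_1^{-2}}(\mathbb{P}^2)$ inside balls of radius $N_3/N_1\le N_1^{-\beta/(1+\beta)}$ and obeys the rescaled transversality $|(2\xi_1,1)\wedge(2\xi_2,1)\wedge(2\xi_3,1)|\gtrsim\nu$ induced by \eqref{eq:Transversality3d}.

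Next I would run the decoupling iteration exactly as in Subsection~\ref{subsection:DecouplingIteration2d} with $\delta=N_1^{-1}$, $2^{\overline m}=N_1^2=\delta^{-2}$ and $2^{-\underline m}=N_3/N_1$: Proposition~\ref{prop:BallInflation} and Proposition~\ref{prop:MultiscaleInequality} apply verbatim, since after the continuous approximation they are statements about functions on $\R^3$ that do not see the torus. Iterating the multiscale inequality at most $n^*=\lceil\log_2(\beta^{-1}+1)\rceil$ times, exactly as in \eqref{eq:MaxIterations} (so that $N_1^{-\frac{\beta}{1+\beta}2^{n^*}}\le N_1^{-1}$), and using $\kappa_4=\tfrac12$, \eqref{eq:StartDecouplingIteration}--\eqref{eq:MultiscaleExpressionII} give
\begin{equation*}
\dashint_{B_{N_1^2}}|\tilde F_1\tilde F_2\tilde F_3|^{4/3}\lesssim \nu^{-1+2^{-n}}\log(N_1)^{c}\, A_4^4(2^n\underline m,Q^{\overline m},\underline F,\delta)^{2^{-n}}\prod_{\ell=1}^{n}D_4^4(2^{\ell-1}\underline m,Q^{\overline m},\underline F,\delta)^{2^{-\ell}}.
\end{equation*}

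It then remains to redo the exit estimates on $\T^2_\lambda$. For the linear quantity $D_4$ I would reverse the scaling and the continuous approximation and invoke the rescaled $L^4_{t,x}$-Strichartz estimate \eqref{eq:RescaledLinearStrichartzConstant}, which for $\lambda\le N_1$ costs only $C_\varepsilon N_1^\varepsilon$, obtaining the analogue of Lemma~\ref{lem:LinearExitEstimate2d}. For $A_4$ I would reprise Proposition~\ref{prop:EstimateApp}: the uncertainty-principle step followed by Hölder's inequality again isolates a bilinear factor $\int_Q|\mathcal{P}_{I_1}F_1|^2|\mathcal{P}_{I_2}F_2|^2$ and a linear factor $\int_Q(\sum_I|\mathcal{P}_I F_2|^2)^2$. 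Upon reversing the scaling, the two decoupled pieces in the bilinear factor have Fourier support in balls of radius $1$ separated by $\gtrsim N_1$ by \eqref{eq:SeparationCondition}; after a Galilean shift moving one of them to the origin, the rescaled bilinear Strichartz estimate of Fan \emph{et al.} and its corollary control it by $B_2(\lambda,N_1,1)^2\approx\lambda^{-1}$ in place of the $N_1^{-\alpha/2}$ of Lemma~\ref{lem:EstimateApp1}, while the linear factor costs $N_1^\varepsilon$ as in Lemma~\ref{lem:EstimateApp2}. Hence $A_4^4(\underline m,\dots)$ acquires a factor $B_2(\lambda,N_1,1)^{4/3}N_1^\varepsilon$, and since in the display above $A_4^4$ enters only to the power $2^{-n}$ we pick up $B_2(\lambda,N_1,1)^{\frac{4}{3}2^{-n}}$.

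Finally I would collect the prefactors. The Jacobian from the rescaling, the factor $(N_1/\lambda)^2$ from the periodic extension, the normalization $|Q^{\overline m}|^{-1}=N_1^{-6}$ inside $A_4^4$ and $D_4^4$, and the powers of $N_1$ produced when reversing the scaling in the exit estimates combine to $1$ up to $N_1^\varepsilon$, just as the bookkeeping $N_1^{4-\alpha}\cdot N_1^{-(4-\alpha)}$ worked in the unit-torus case. The exponents $2^{-\ell}$ of the $D_4^4$ factors sum to $\sum_{\ell\ge1}2^{-\ell}\le 1$, so those factors contribute only $N_1^\varepsilon$; and using $n\le n^*$ together with $2^{-n^*}\ge\frac{\beta}{2(1+\beta)}$, $\nu\le1$, and $B_2(\lambda,N_1,1)\le\sqrt{2}$, one replaces $2^{-n}$ by $\frac{\beta}{2(1+\beta)}$ in the exponents of $\nu$ and $B_2$ and absorbs the logarithmic losses into $N_1^\varepsilon$, arriving at \eqref{eq:TrilinearImprovedStrichartz2d}. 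I expect the main obstacle to be precisely this bookkeeping of the rescaling factors — the power $(N_1/\lambda)^2$ from the spatial periodic extension has to cancel cleanly against the Jacobians and $|Q^{\overline m}|^{-1}$ — together with verifying that the $\lambda$-dependence enters the exit step only through the already-recorded constants $L_2(\lambda,N_1)$ and $B_2(\lambda,N_1,\cdot)$ and nowhere else in the decoupling machinery.
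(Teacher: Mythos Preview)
Your proposal is correct and follows essentially the same route as the paper's proof: rescale and pass to the continuous approximation on $B_{N_1^2}$, run the multiscale inequality from Subsection~\ref{subsection:DecouplingIteration2d} with $\delta=N_1^{-1}$ and $n^*=\lceil\log_2(\beta^{-1}+1)\rceil$ iterations, and then exit by replacing Lemma~\ref{lem:LinearExitEstimate2d} and Proposition~\ref{prop:EstimateApp} by their $\T^2_\lambda$ analogues using the rescaled linear $L^4$ estimate and the Fan \emph{et al.} bilinear constant $B_2(\lambda,N_1,1)$. The paper records these two exit estimates as Lemma~\ref{lem:RescaledLinearExit2d} and Proposition~\ref{prop:RescaledBilinearExitEstimate2d}, and the collection of prefactors is exactly as you describe.
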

\begin{proof}
The argument follows Section \ref{section:TrilinearStrichartz2d} closely. We use rescaling and continuous approximation to find
\begin{equation*}
\begin{split}
\int_{[0,1] \times \T^2_\lambda} \big| U_\lambda(t) f_1 U_\lambda(t) f_2 U_\lambda(t) f_3 \big|^{\frac{4}{3}} dx dt &\lesssim N_1^4 \int_{[0,N_1^2] \times N_1 \T^2_\lambda} \big| U_\lambda(t) \tilde{f}_1 U_\lambda(t) \tilde{f}_2 U_\lambda(t) \tilde{f}_3 \big|^{\frac{4}{3}} dx dt \\
&\lesssim N_1^4 (N_1 \lambda^{-1})^2 \int_{\R^3} \big| \mathcal{E} \tilde{f}_1 \mathcal{E} \tilde{f}_2 \mathcal{E} \tilde{f}_3 \big|^{\frac{4}{3}} w_{B_{N_1^2}}^{\frac{4}{3}} dx dt.
\end{split}
\end{equation*}
Above $w_{B_{N_1^2}}$ denotes a weight rapidly decaying off $B_{N_1^2}$, which is compactly supported in Fourier space. We turn to decoupling of
\begin{equation*}
\frac{1}{|B_{N_1}|^2} \int_{\R^3} |\tilde{F}_1 \tilde{F}_2 \tilde{F}_3|^{\frac{4}{3}} dx dt,
\end{equation*}
where $\tilde{F}_i$ denotes functions, which are supported in the $N_1^{-2}$-neighbourhood of the truncated paraboloid, their spatial Fourier support is confined to balls of radius $N_1^{-\frac{\beta}{1+\beta}}$ and $\xi_i \in \pi_{\R^2}(\text{supp} \hat{F}_i)$ satisfy
\begin{equation*}
| (2\xi_1,1) \wedge (2\xi_2,1) \wedge (2\xi_3,1)| \gtrsim \nu.
\end{equation*}
We use notations $A_p$ and $D_p$ from Section \ref{subsection:DecouplingIteration2d}. Moreover, we have
\begin{equation*}
2^{- \underline{m}} = \frac{N_3}{N_1} \leq N_1^{-\frac{\beta}{1+\beta}} \text{ and } 2^{\overline{m}} = N_1^{2} = \delta^{-2},
\end{equation*}
and the multiscale inequality becomes
\begin{equation}
\label{eq:RescaledMultiscaleInequality}
\begin{split}
A_p^p(\underline{m},Q^{\overline{m}},\underline{F},\delta) &\lesssim \nu^{-\frac{1}{2}} \log(N_1)^c \big( \prod_{\ell=1}^{n-1} \nu^{-\frac{\kappa_p^\ell}{2}} \log(N_1)^{c \kappa_p^{\ell}} \big) \\
&\quad \times A_p^p(2^n \underline{m}, Q^{\overline{m}}, \underline{F},\delta)^{\kappa_p^n} \prod_{\ell=1}^{n} D_p^p(2^{\ell -1} \underline{m}, Q^{\overline{m}}, \underline{F},\delta)^{\kappa_p^{\ell - 1} (1-\kappa_p)}.
\end{split}
\end{equation}
Recall that $\kappa_4 = \frac{1}{2}$ and like above, the number of iterations of the multiscale inequality is at most
\begin{equation*}
m = \lceil \log_2(\beta^{-1} + 1) \rceil.
\end{equation*}
Presently, we use linear and bilinear Strichartz estimates on the $\lambda$-torus for finite times to exit the decoupling iteration. We record the analogs of Lemma \ref{lem:LinearExitEstimate2d} and Proposition \ref{prop:EstimateApp}:
\begin{lemma}
\label{lem:RescaledLinearExit2d}
The following estimate holds:
\begin{equation*}
\begin{split}
D_4^4(m,Q,\underline{F},\delta) &= \frac{1}{|Q|} \big[ \prod_{i=1}^3 \big( \sum_{I_{i,m} \in \mathbb{I}_m} \| \mathcal{P}_{I_{i,m}} F_i \|^2_{L^4(w_{Q})} \big)^{\frac{1}{2}} \big]^\frac{4}{3} \\
&\lesssim \frac{N_1^{-6} \lambda^2}{|Q|} L_2^4(\lambda,N_1) \big( \prod_{i=1}^3 \| f_i \|_2 \big)^{\frac{4}{3}}.
\end{split}
\end{equation*}
\end{lemma}
For the multilinear expression we find via exiting the decoupling iteration with bilinear Strichartz estimates:
\begin{proposition}
\label{prop:RescaledBilinearExitEstimate2d}
The following estimate holds:
\begin{equation*}
 A_4^4(2^n \underline{m},Q^{\overline{m}},\underline{F},\delta) \lesssim_\varepsilon \frac{1}{|Q|} N_1^{-6} \lambda^2 B_2^{\frac{4}{3}}(\lambda,N_1) L_2^{\frac{4}{3}}(\lambda,N_1) \big( \prod_{i=1}^3 \| f_i \|_2 \big)^{\frac{4}{3}}.
\end{equation*}
\end{proposition}
\begin{proof}
In the following we use the heuristic argument previously described in \eqref{eq:HeuristicLocallyConstant} relying on the uncertainty principle. This can be made rigid like in the proof of Proposition \ref{prop:EstimateApp}. We have
\begin{equation*}
\begin{split}
&\quad \frac{1}{|\mathcal{Q}_{m,r}|} \sum_{\Delta_m \in \mathcal{Q}_{m,r}(Q^r)} \big( \prod_{i=1}^3 \big( \sum_{I_{i,m} \in \mathbb{I}_m} \| \mathcal{P}_{I_{i,m}} F_i \|^2_{L^2_{\#}(w_{\Delta_m})} \big)^{\frac{1}{2}} \big)^{\frac{4}{3}} \\
&\lesssim \frac{1}{|Q|} \int_Q \prod_{i=1}^3 \big( \sum_I |\mathcal{P}_I F_i|^2 \big)^{\frac{4}{6}} \\
&\lesssim \frac{1}{|Q|} \big( \sum_{I_1,I_3} \int_Q |\mathcal{P}_{I_1} F_1 \mathcal{P}_{I_3} F_3|^2 \big)^{\frac{2}{3}} \big( \int_Q \big( \sum_I |\mathcal{P}_I F_2|^2 \big)^2 \big)^{\frac{1}{3}}.
\end{split}
\end{equation*}
At this point we can reverse the continuous approximation and use rescaled Strichartz estimates to find
\begin{equation*}
\lesssim \frac{1}{|Q|} N_1^6 \lambda^{-2} B_2(\lambda,N_1,1)^{\frac{4}{3}} \big( \prod_{i=1}^3 \| f_i \|_{L^2_\lambda} \big)^{\frac{4}{3}}.
\end{equation*}
\end{proof}

Now we can conclude the proof of Theorem \ref{thm:RescaledEstimate2d} by employing Lemma \ref{lem:RescaledLinearExit2d} and Proposition \ref{prop:RescaledBilinearExitEstimate2d} in \eqref{eq:RescaledMultiscaleInequality}:
\begin{equation*}
A_4^4(\underline{m},Q^{\overline{m}},\underline{F},\delta) \lesssim \nu^{-1} \log(N_1)^{2c} B_2(\lambda,N_1,1)^{\frac{1}{2^m} \cdot \frac{4}{3}} L_2(\lambda,N_1) \big( \prod_{i=1}^3 \| f_i \|_{L^2_\lambda} \big)^{\frac{4}{3}}.
\end{equation*}
With $m= \lceil \log_2(\beta^{-1}+1) \rceil$, we find
\begin{equation*}
\int_{[0,1] \times \T_{\lambda}} \prod_{i=1}^3 | U_\lambda(t) f_i|^{\frac{4}{3}} dx dt \lesssim_\varepsilon N_1^\varepsilon B_2(\lambda,N_1,1)^{\frac{2 \beta}{3(\beta +1)}} \big( \prod_{i=1}^3 \| f_i \|_{L^2_\lambda} \big)^{\frac{4}{3}}.
\end{equation*}
\end{proof}
 
\section{Strichartz estimates on frequency dependent time intervals}
\label{section:LinearSmoothing}

This section is concerned with linear Strichartz estimates on frequency-dependent time intervals:
\begin{equation}
\label{eq:ShorttimeStrichartzSEQ}
\| P_N e^{it \Delta} f \|_{L^p_{t,x}([0,N^{-\alpha}] \times \T^d)} \lesssim \| f \|_{L^2(\T^d)}.
\end{equation}

At the critical exponent $p=\frac{2(d+2)}{d}$ in the limiting case $N^{-\alpha} \to \log(N)^{-1}$ this implies global well-posedness of the mass-critical NLS.

In this section we point out how decoupling implies smoothing for $\alpha > 0$ and $2 \leq p < \frac{2(d+2)}{d}$ and for increased dispersion at the critical exponent.

Although the present argument misses \eqref{eq:ShorttimeStrichartzSEQ}, we obtain structural information about possible counterexamples. We have the following variant of \eqref{eq:ShorttimeStrichartzSEQ}:
\begin{proposition}
\label{prop:ShorttimeStrichartzLowP}
Let $\alpha > 0$, and $2 \leq p < \frac{2(d+2)}{d}$. Then the following estimate holds:
\begin{equation}
\label{eq:ShorttimeStrichartzLowP}
\| e^{it \Delta} P_N f \|_{L^p_{t,x}([0,N^{-\alpha}] \times \T^d)} \lesssim N^{-\kappa} \| f \|_{L^2(\T^d)}
\end{equation}
for $\kappa < \alpha \big( \frac{d}{2} \big( \frac{1}{2} - \frac{1}{p} \big) - \frac{1}{p} \big)$.
\end{proposition}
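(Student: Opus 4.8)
The plan is to obtain \eqref{eq:ShorttimeStrichartzLowP} as a soft consequence of the Bourgain--Demeter $\ell^2$-decoupling inequality \eqref{eq:l2DecouplingIntroduction}: over the time window $[0,N^{-\alpha}]$ the spatial scale produced by decoupling is $N^{\alpha/2}$ rather than $1$, and once $e^{it\Delta}P_Nf$ has been decoupled into frequency cubes of this size one may simply apply Bernstein's inequality on each cube and integrate trivially in $t$. Since $\alpha>0$ the cube size exceeds $1$, so Bernstein is favourable, and the resulting power of $N$ is a genuine gain precisely in the subcritical range $2\le p<\frac{2(d+2)}{d}$. We may assume $\alpha\le 2$, so that the ball $B_{d+1}(0,N^{2-\alpha})$ appearing below has radius $\gtrsim 1$; the case $\alpha>2$ is handled by applying Bernstein directly to $P_Nf$ on $\{|\xi|\sim N\}$, which already suffices.

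First I would rescale $x\mapsto x/N$, $t\mapsto t/N^2$ and periodize in $x$ exactly as in Subsection~\ref{subsection:ContinuousApproximation1d}, so that, up to an explicit power of $N$ from the Jacobian and the periodization and up to the continuous approximation, the left-hand side of \eqref{eq:ShorttimeStrichartzLowP} is dominated by an $L^p$-integral over $B_{d+1}(0,R)$, $R=N^{2-\alpha}$, of $\mathcal{E}g$ for a function $g$ supported in the unit ball. Applying \eqref{eq:l2DecouplingIntroduction} on $B_{d+1}(0,R)$ at the scale $R^{-1/2}=N^{-1+\alpha/2}$ costs a factor $R^\varepsilon$; reversing the continuous approximation and undoing the change of variables, the explicit powers of $N$ cancel and the caps of radius $R^{-1/2}$ become frequency cubes $\theta$ of side $M:=N^{\alpha/2}$ on the original torus, yielding
\begin{equation*}
\|e^{it\Delta}P_Nf\|_{L^p_{t,x}([0,N^{-\alpha}]\times\T^d)}\lesssim_\varepsilon N^\varepsilon\Big(\sum_\theta\|e^{it\Delta}P_\theta f\|^2_{L^p_{t,x}([0,N^{-\alpha}]\times\T^d)}\Big)^{1/2},
\end{equation*}
the sum running over an essentially disjoint family of such cubes covering $\{|\xi|\sim N\}$ and $P_\theta$ denoting Fourier projection to $\theta$. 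This transference is routine (cf.\ the discussion after \eqref{eq:l2DecouplingIntroduction}).

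For a single cube the spatial Fourier support of $e^{it\Delta}P_\theta f$ lies in $\theta$, a set of diameter $M=N^{\alpha/2}\ge 1$, so Bernstein's inequality on $\T^d$ and the $L^2$-unitarity of $e^{it\Delta}$ give $\|e^{it\Delta}P_\theta f\|_{L^p_x(\T^d)}\lesssim M^{d(\frac12-\frac1p)}\|P_\theta f\|_{L^2(\T^d)}$ for each fixed $t$. Raising to the power $p$, integrating over $t\in[0,N^{-\alpha}]$, and using the $L^2$-orthogonality of the $P_\theta f$ (the cubes $\theta$ being disjoint), one obtains
\begin{equation*}
\Big(\sum_\theta\|e^{it\Delta}P_\theta f\|^2_{L^p_{t,x}([0,N^{-\alpha}]\times\T^d)}\Big)^{1/2}\lesssim N^{-\alpha/p}\,M^{d(\frac12-\frac1p)}\,\|f\|_{L^2(\T^d)}.
\end{equation*}
Combining this with the decoupling inequality and inserting $M=N^{\alpha/2}$ produces the bound $N^{\varepsilon}\,N^{-\alpha/p}\,N^{\frac{\alpha d}{2}(\frac12-\frac1p)}\|f\|_{L^2}$; since $\frac{\alpha d}{2}(\frac12-\frac1p)-\frac\alpha p<0$ exactly when $p<\frac{2(d+2)}{d}$, this is a genuine smoothing bound, and after renaming $\varepsilon$ it gives \eqref{eq:ShorttimeStrichartzLowP}, with the strict inequality on $\kappa$ absorbing the $N^\varepsilon$-loss from decoupling.

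I do not expect any substantial analytic difficulty: no bilinear estimates, no Galilean invariance, and no lattice-point counting are needed, only decoupling and Bernstein. The one point requiring care is the bookkeeping in the rescaling/periodization step --- one must verify that the explicit powers of $N$ introduced by the change of variables and by passing from the torus integral to the ball $B_{d+1}(0,R)$ cancel exactly upon reversing the approximation, and that the decoupling constant surviving the transference is only $N^\varepsilon$. It is also worth recording that the argument saturates at $p=\frac{2(d+2)}{d}$, where the gain disappears; this is consistent with the fact that genuine smoothing at the critical exponent requires increasing the dispersion, as in Proposition~\ref{prop:ShorttimeStrichartzAiry}.
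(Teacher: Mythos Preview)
Your proposal is correct and follows essentially the same route as the paper: rescale to unit frequencies and periodize to a ball of radius $N^{2-\alpha}$, apply $\ell^2$-decoupling to caps of size $N^{-1+\alpha/2}$, reverse the approximation, then finish each $N^{\alpha/2}$-cube with H\"older in time and Bernstein in space and sum by $L^2$-orthogonality. The paper also singles out the same bookkeeping point you flag---handling the weight $w_{B_{N^{2-\alpha}}}$ on the right-hand side by periodicity in $x$ and summability in $t$---and your added remark on the case $\alpha>2$ is a harmless supplement.
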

\begin{proof}
We use scaling $t \to N^2 t$, $x \to N x$ to rescale to unit frequencies:
\begin{equation*}
e^{it \Delta} P_N f  = \sum_{|k| \sim N} e^{i ( kx + tk^2)} a_k= \sum_{|k'| \sim 1} e^{i (N k' \cdot x + t N^2 (k')^2)} a_{N k'}
\end{equation*}
and use periodicity to obtain
\begin{equation*}
\begin{split}
\| e^{it \Delta} P_N f \|^p_{L^p_{t,x}([0,N^{-\alpha}] \times \T^d)} &\to N^{-2} N^{-d} \| e^{it \Delta} P_1 \tilde{f} \|^p_{L^p_{t,x}([0,N^{2-\alpha}] \times (N \T)^d} \\
&\sim N^{-(2+d+(1-\alpha) d)} \| e^{it \Delta} P_1 \tilde{f} \|^p_{L^p_{t,x}(B_{N^{2-\alpha}})}.
\end{split}
\end{equation*}
We use continuous approximation
\begin{equation*}
e^{it \Delta} P_1 \tilde{f} = \sum_{k \in (\Z/N)^d, |k| \sim 1} e^{i (k x + tk^2)} a_k \approx \mathcal{E} \tilde{f} 
\end{equation*}
to obtain:
\begin{equation*}
\| e^{it \Delta} P_1 \tilde{f} \|_{L^p_{t,x}(B_{N^{2-\alpha}})} \lesssim \| \mathcal{E} \tilde{f} \|_{L^p_{t,x}(w_{B_{N^{2-\alpha}}})}.
\end{equation*}
This is amenable to $\ell^2$-decoupling\footnote{For $d=1$ the relevant range is $4<p<6$ because for $p=4$ the Fefferman--Córdoba square function estimate holds without loss.}, which gives
\begin{equation*}
\| \mathcal{E} \tilde{f} \|_{L^p_{t,x}(w_{B_{N^{2-\alpha}}})} \lesssim_\varepsilon N^{\varepsilon} \big( \sum_{\theta: N^{-1+\frac{\alpha}{2}}-\text{ball}} \| \mathcal{E} \tilde{f}_\theta \|^2_{L^p_{t,x}(w_{B_{N^{2-\alpha}}})} \big)^{\frac{1}{2}}.
\end{equation*}
After applying $\ell^2$-decoupling, we can reverse the continuous approximation, the scaling and use periodicity to find
\begin{equation*}
\begin{split}
&\quad \big( \sum_{\theta: N^{-1+\frac{\alpha}{2}}-\text{ball}} \| \mathcal{E} \tilde{f}_\theta \|^2_{L^p_{t,x}(w_{B_{N^{2-\alpha}}})} \big)^{\frac{1}{2}} \\ &\lesssim N^{\frac{2}{p}} N^{\frac{d}{p}} N^{\frac{(1-\alpha) d}{p}} \big( \sum_{\theta: N^{\frac{\alpha}{2}}- \text{ball}} \| e^{it \Delta} P_{\theta} f \|^2_{L^p_{t,x}([0,N^{-\alpha}] \times \T^d)} \big)^{\frac{1}{2}}.
\end{split}
\end{equation*}
A note of clarification here: the weight actually requires us to estimate 
\begin{equation*}
\| e^{it \Delta} P_\theta f \|_{L^p_{t,x}(w([0,N^{-\alpha}] \times \T^d))}.
\end{equation*}
 This can be reduced to the above by periodicity in the spatial variables. In the time variable we simply apply the following argument to
\begin{equation*}
\| e^{it \Delta} P_\theta f \|_{L^p_{t,x}([k N^{-\alpha},(k+1) N^{-\alpha}] \times \T^d)}
\end{equation*}
with $k \in \Z$, which is summable in $k$ by the weight $w$ decaying off $[0,N^{-\alpha}]$ and unitarity of the Schr\"odinger propagator in $L^2(\T^d)$.

Lastly, we use H\"older in time and Bernstein's inequality to find
\begin{equation*}
\| e^{it \Delta} P_\theta f \|_{L^p_{t,x}([0,N^{-\alpha}] \times \T^d)} \lesssim N^{-\frac{\alpha}{p}} N^{\frac{\alpha}{2} \cdot d \big( \frac{1}{2} - \frac{1}{p} \big)} \| P_\theta f \|_{L^2(\T^d)}.
\end{equation*}
The proof of \eqref{eq:ShorttimeStrichartzLowP} is complete by $L^2$-orthogonality of $(P_\theta f)_{\theta}$.
\end{proof}

Clearly, the argument extends to irrational tori. 

In another direction the decoupling argument shows that smoothing estimates hold for increased dispersion on frequency-dependent time intervals with \emph{arbitrary} power $N^{-\alpha}$ at the critical exponent. With a nonlinear application in mind, we formulate the result for the Airy propagator: 
\begin{proposition}
\label{prop:ShorttimeStrichartzAiry}
Let $\alpha \in (0,2]$. The following estimate holds:
\begin{equation}
\label{eq:ShorttimeAiryEstimate}
\| P_N e^{t \partial_x^3} f \|_{L_t^6([0,N^{-\alpha}], L_x^6(\T))} \lesssim N^{-\kappa} \| f \|_{L^2(\T)}
\end{equation}
for
\begin{equation*}
\kappa < \kappa_0 =
\begin{cases}
\frac{\alpha}{6}, \quad &\alpha \in (0,1], \\
\frac{1}{6}, \quad &\alpha \in (1,2].
\end{cases}
\end{equation*}
\end{proposition}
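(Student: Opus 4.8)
The plan is to run the same scheme as in the proof of Proposition~\ref{prop:ShorttimeStrichartzLowP}, with the Schr\"odinger phase $\xi^2$ replaced by the Airy phase $\xi^3$. The mechanism behind the gain at the critical exponent $p=6$ is that the stronger dispersion makes the relevant space--time ball larger ($N^{3-\alpha}$ rather than $N^{2-\alpha}$), which forces the decoupling caps to be finer and thereby removes (for $\alpha\le 1$) the Bernstein loss that obstructs the endpoint for the Schr\"odinger propagator.

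First I would rescale to unit frequencies. Writing $e^{t\partial_x^3}P_Nf(x)=\sum_{|k|\sim N}e^{i(kx+tk^3)}a_k$ and substituting $k=Nk'$, $X=Nx$, $T=N^3t$ puts the frequencies into $\Z/N$ with $|k'|\sim1$ and the time interval into $[0,N^{3-\alpha}]$. Spatial periodicity (admissible since $N^{3-\alpha}\ge N$ precisely when $\alpha\le2$, which is where the hypothesis on $\alpha$ enters) together with continuous approximation then gives, after dominating $1_{B}\lesssim w_B^6$,
\begin{equation*}
\|P_Ne^{t\partial_x^3}f\|^6_{L^6_{t,x}([0,N^{-\alpha}]\times\T)}\sim N^{-(6-\alpha)}\,\|\mathcal{A}\tilde f\|^6_{L^6_{t,x}(w_{B_{N^{3-\alpha}}})},
\end{equation*}
where $\mathcal{A}\tilde f(X,T)=\int_{|\xi|\sim1}e^{i(X\xi+T\xi^3)}\tilde f(\xi)\,d\xi$ is the extension operator of the arc $\{(\xi,\xi^3):|\xi|\sim1\}$ and $w_{B_{N^{3-\alpha}}}$ is a Schwartz weight adapted to the ball of radius $N^{3-\alpha}$.

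Next I would decouple. The arc $\{(\xi,\xi^3):\xi\sim1\}$ avoids the inflection point $\xi=0$, hence has curvature bounded above and below, so $\ell^2$-decoupling on $L^6$ holds for it with an $N^\varepsilon$ loss (cf.\ \cite{BourgainDemeter2015}), decomposing $\mathcal{A}\tilde f$ into caps $\theta$ of size $(N^{3-\alpha})^{-1/2}$. Reversing the continuous approximation and the scaling---and handling the weight by spatial periodicity and by summing over unit-length time translates, exactly as in Proposition~\ref{prop:ShorttimeStrichartzLowP}---turns each piece back into $\|e^{t\partial_x^3}P_{\tilde\theta}f\|_{L^6_{t,x}([0,N^{-\alpha}]\times\T)}$ with $\tilde\theta=N\theta$ an interval of length $M:=\max\{1,N^{(\alpha-1)/2}\}$ in the original frequency variable, and the powers of $N$ cancel against the prefactor above; thus
\begin{equation*}
\|P_Ne^{t\partial_x^3}f\|_{L^6_{t,x}([0,N^{-\alpha}]\times\T)}\lesssim_\varepsilon N^\varepsilon\Big(\sum_\theta\|e^{t\partial_x^3}P_{\tilde\theta}f\|^2_{L^6_{t,x}([0,N^{-\alpha}]\times\T)}\Big)^{1/2}.
\end{equation*}
On each cap, H\"older in time over $[0,N^{-\alpha}]$ gives $N^{-\alpha/6}$, Bernstein $L^2(\T)\to L^6(\T)$ for Fourier support in an interval of length $M$ gives $M^{1/3}$, and $e^{t\partial_x^3}$ is unitary on $L^2$; summing squares and using $L^2$-orthogonality of $(P_{\tilde\theta}f)_\theta$ leaves $\|P_Ne^{t\partial_x^3}f\|_{L^6_{t,x}}\lesssim_\varepsilon N^{\varepsilon}N^{-\alpha/6}M^{1/3}\|f\|_{L^2}$. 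For $\alpha\le1$ one has $M=1$ (the caps are finer than the frequency lattice, i.e.\ effectively single frequencies), giving exponent $-\alpha/6$; for $1<\alpha\le2$ one has $M=N^{(\alpha-1)/2}$, giving exponent $-\alpha/6+(\alpha-1)/6=-1/6$. In both cases the strict inequality $\kappa<\kappa_0$ absorbs the $N^\varepsilon$, which is the assertion.

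The hard part is entirely bookkeeping: one must track every power of $N$ produced by the rescaling $t\mapsto N^3t$, by spatial periodicity, by continuous approximation, and by their reversal, and verify these cancel up to $N^\varepsilon$; and one must carry out the dichotomy according to whether the decoupling caps of size $(N^{3-\alpha})^{-1/2}$ fall below or above the frequency spacing $N^{-1}$ after undoing the rescaling, which is precisely what produces the two regimes of $\kappa_0$. No analytic ingredient beyond those already used for Proposition~\ref{prop:ShorttimeStrichartzLowP} is needed.
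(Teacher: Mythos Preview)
Your proof is correct and follows the same scheme as the paper: rescale to unit frequencies, use continuous approximation to pass to the extension operator of the curve $\{(\xi,\xi^3):|\xi|\sim1\}$ (which has nondegenerate curvature away from $\xi=0$), apply $\ell^2$-decoupling at $L^6$, reverse, and close with H\"older in time plus Bernstein in space on each cap. Two minor differences are worth noting. First, for $\alpha\in[1,2]$ the paper inserts a Galilean transform $k=A+\ell$ to reduce the phase on each cap to $3A\ell^2+\ell^3$ and argues it does not oscillate on $[0,N^{-\alpha}]$; your direct use of Bernstein at fixed $t$ (which only needs the size of the Fourier support, preserved by the unitary flow) bypasses this and is cleaner. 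Second, for $\alpha\in(0,1]$ the paper reaches $\kappa_0=\alpha/6$ by chopping $[0,N^{3-\alpha}]$ into $N^{1-\alpha}$ subintervals of length $N^2$ and invoking the $\alpha=1$ case on each, whereas you observe directly that the decoupling caps of size $N^{-(3-\alpha)/2}<N^{-1}$ fall below the lattice spacing, so each cap carries at most one frequency and $M=1$; this is a nice simplification that yields the same exponent without the time-splitting.
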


We consider $\alpha < 2$ because for larger $\alpha$ we are within the Euclidean window $T=T(N)=N^{-2}$ and the estimates from Euclidean space are expected to hold. Regarding the time localization $T=T(N)=N^{-2}$ we have due to Dinh \cite{Dinh2017}:
\begin{equation*}
\| P_N e^{t \partial_x^3} f \|_{L^6_t([0,N^{-2}],L^6_x(\T))} \lesssim N^{-\frac{1}{6}} \| f \|_{L^2(\T)}.
\end{equation*}
This recovers the estimate from Euclidean space. Note that the smoothing will further improve for $T(N) = N^{-2+\beta}$, $\beta>0$ as a simple consequence of H\"older's and Bernstein's inequality.

\begin{proof}[Proof~of~Proposition~\ref{prop:ShorttimeStrichartzAiry}]
Suppose that $\alpha \in [1,2)$. From scaling and periodicity we find
\begin{equation*}
\begin{split}
\| P_N e^{t \partial_x^3} f \|^6_{L_t^6([0,N^{-\alpha}],L^6(\T))} &\lesssim N^{-4} \| P_1 e^{t \partial_x^3} \tilde{f} \|^6_{L_t^6([0,N^{3-\alpha}],L^6_x(N \T))} \\
&\lesssim N^{-6+\alpha} \| P_1 e^{t \partial_x^3} \tilde{f} \|^6_{L_t^6([0,N^{3-\alpha}],L^6_x(B_{N^{3-\alpha}}))}.
\end{split}
\end{equation*}
After continuous approximation,
\begin{equation*}
P_1 e^{t \partial_x^3} \tilde{f} \approx \mathcal{E} f, \quad \mathcal{E} f = \int e^{i(x \xi + t \xi^3)} f(\xi) d\xi,
\end{equation*}
the expression is amenable to decoupling on the scale $\delta = N^{-\frac{3}{2}+\frac{\alpha}{2}}$ (such that $\delta^{-2} = N^{3 - \alpha}$). The application of decoupling incurs a factor $\log(N)^c$ due to Guth--Maldague--Wang \cite{GuthMaldagueWang2020} and scaling back and reversing the continuous approximation we obtain
\begin{equation}
\label{eq:SmoothingAiry}
\lesssim \log(N)^c \big( \sum_{\theta: N^{-\frac{1}{2}+\frac{\alpha}{2}}-\text{interval}} \| P_\theta e^{t \partial_x^3} f \|^2_{L_t^6([0,N^{-\alpha}],L^6(\T))} \big)^{\frac{1}{2}}.
\end{equation}
Here we sum over $N^{-\frac{1}{2}+\frac{\alpha}{2}}$-intervals $\theta$. It turns out that after using a KdV-Galilean transform we find with $|A| \sim N$ and $M = N^{\frac{1}{2}+\frac{\alpha}{2}}$:
\begin{equation*}
\begin{split}
\sum_{k=A}^{A+M} e^{i (kx + tk^3)} a_k &= e^{i A x} \sum_{\ell=0}^M e^{i (\ell x + t(A+\ell)^3)} a_{\ell + A} \\
&= e^{it A^3 + i A x} \sum_{\ell= 0 }^M e^{i (\ell (x+3A^2 t) + t(3 A \ell^2 + \ell^3))} a_{\ell + A}. 
\end{split}
\end{equation*}
Hence, it suffices to observe that the dispersion relation $\omega(\ell) = 3 A \ell^2 + \ell^3$ does not cause oscillations for $|t| \lesssim N^{-\alpha}$ and $|\ell| \lesssim M$.
We obtain by Bernstein's inequality and H\"older's inequality in time for $\theta$ an $N^{-\frac{1}{2}+\frac{\alpha}{2}}$-interval:
\begin{equation}
\label{eq:SmoothingAiryII}
\| P_\theta e^{t \partial_x^3} f \|_{L_t^6([0,N^{-\alpha}],L^6(\T))} \lesssim N^{-\frac{\alpha}{6}} N^{\big( -\frac{1}{2}+\frac{\alpha}{2} \big) \big( \frac{1}{2}-\frac{1}{6} \big)} \| P_{\theta} f \|_{L^2} = N^{-\frac{1}{6}} \| P_\theta f \|_{L^2}.
\end{equation}
Then the claim follows from plugging \eqref{eq:SmoothingAiryII} into \eqref{eq:SmoothingAiry} and almost orthogonality of $(P_\theta f)_{\theta}$ in $L^2$.

Secondly, we suppose that $\alpha \in (0,1]$. We obtain by rescaling and periodicity
\begin{equation*}
\begin{split}
\| P_N e^{t \partial_x^3} f \|_{L_t^6([0,N^{-\alpha}],L^6_x(\T))}^6 &\lesssim N^{-5} \| P_1 e^{t \partial_x^3} \tilde{f} \|_{L_t^6([0,N^{3-\alpha}],L^6_x(B_{N^2}))}^6 \\
&= N^{-5} \sum_{I: |I| = N^2} \| P_1 e^{t \partial_x^3} \tilde{f} \|_{L_t^6(I,L^6_x(B_{N^2}))}^6.
\end{split}
\end{equation*}
In the last step we decomposed the time interval into intervals of length $N^2$, which yields $N^{1-\alpha}$ intervals. On every interval we can use the arguments for $\alpha \in [1,2]$, i.e. decoupling, reversing the continuous approximation, and trivial estimate of the resulting exponential sum. This yields 
\begin{equation*}
\| P_1 e^{t \partial_x^3} \tilde{f} \|_{L_t^6(I,L^6_x(B_{N^2}))}^6 \lesssim \log(N)^c N^{4} \| f \|^6_{L^2_x(\T)}.
\end{equation*}
Summing $N^{1-\alpha}$ intervals we obtain
\begin{equation*}
\| P_N e^{t \partial_x^3} f \|_{L_t^6([0,N^{-\alpha}],L^6_x(\T))} \lesssim N^{-\frac{\alpha}{6}} \log(N)^c \| f \|_{L^2(\T)}.
\end{equation*}
The proof is complete.
\end{proof}

\section{Non-existence of solutions to the mKdV equation}
\label{section:ExistenceMKDV}

The modified Korteweg-de Vries equation is given by
\begin{equation}
\label{eq:mKdV}
\left\{ \begin{array}{cl}
\partial_t u + \partial_x^3 u &= \pm u^2 \partial_x u , \quad (t,x) \in \R \times \T, \\
u(0) &= u_0 \in H^s(\T)
\end{array} \right.
\end{equation}
The equation with $+$-sign on the right hand side is referred to as defocusing equation (due to coercivity of the energy), whereas the equation with $-$-sign is referred to as focusing.

The well-posedness theory of the modified Korteweg-de Vries equation is extensively studied (see e.g. \cite{Bourgain1993B,KenigPonceVega1996,CollianderKeelStaffilaniTakaokaTao2003,KappelerTopalov2005,Molinet2012,KappelerMolnar2017,Forlano2022}) and the following brief account is by no means exhaustive. We also refer to the references within the cited works. Unless explicitly stated otherwise, we only consider real-valued solutions in the following.

In this section we argue how the short-time Strichartz estimates for the Airy propagator from the previous section implies non-existence of solutions to the  modified Korteweg-de Vries equation in Sobolev spaces with negative regularity. We give a brief overview of previous results.

\medskip

Kappeler--Topalov \cite{KappelerTopalov2005} (see also \cite{KappelerTopalov2004}) showed that the defocusing (real-valued) modified Korteweg-de Vries equation
is globally well-posed in $L^2(\T)$. In \cite{KappelerTopalov2005} it was exploited that the Miura map is a global fold. The Miura map assigns solutions to the defocusing mKdV in $L^2(\T)$ to solutions to the Korteweg-de Vries equation in $H^{-1}(\T)$. Kappeler-Topalov proved global well-posedness of the (arguable even more prominent) Korteweg-de Vries equation in $H^{-1}(\T)$ in their work \cite{KappelerTopalov2006}.

\smallskip

In \cite{KappelerTopalov2005} solutions in $L^2(\T)$ were defined through extension of the data-to-solution mapping from smooth initial data (see also below). 
This well-posedness result was supplemented by Molinet \cite{Molinet2012}, who proved that the Kappeler-Topalov solutions are weak solutions for $s=0$ and the data-to-solution mapping fails to be weakly continuous as a map from $L^2(\T) \to \mathcal{D}'([0,T] \times \T)$. He also showed the first existence result for solutions to the focusing equation in $L^2(\T)$.

\medskip

 However, the scaling critical regularity is $s=-\frac{1}{2}$ and one might surmise that some probabilistic well-posedness result could still hold in Sobolev spaces of negative regularity. Indeed, Kappeler-Molnar \cite{KappelerMolnar2017} proved local well-posedness of the defocusing equation in Fourier Lebesgue spaces $\mathcal{F} L^p$ for $2<p<\infty$ after renormalization (see \eqref{eq:RenormalizedmKdV}). The arguments rely on complete integrability. This shows that the unrenormalized equation is ill-posed in Fourier Lebesgue spaces of negative Sobolev regularity. Further recent local well-posedness results on the complex-valued mKdV equation in Fourier Lebesgue spaces are due to Chapouto \cite{Chapouto2021,Chapouto2023}. We also mention the recent work of Forlano \cite{Forlano2022}, who unified and simplified previous proofs of global well-posedness for the modified Korteweg-de Vries equation in Sobolev spaces $H^{s}(\mathbb{K})$ with $0 \leq s < \frac{1}{2}$ and $\mathbb{K} \in \{ \mathbb{T}, \mathbb{R} \}$.
 
 \medskip

Conditional upon the conjectured $L^8_{t,x}$-Strichartz estimate
\begin{equation}
\label{eq:L8StrichartzAiry}
\| P_N e^{t \partial_x^3} f \|_{L_{t,x}^8([0,1] \times \T)} \lesssim_\varepsilon N^\varepsilon \| f \|_{L^2(\T)},
\end{equation}
I have proved in \cite{Schippa2020} that there is no data-to-solution mapping to the unrenormalized equation in the following sense in $H^s(\T)$ for $s<0$: A map $S:H^s(\T) \to C([-T,T],H^s(\T))$ where $T=T(\| u_0 \|_{H^s}) > 0$ is referred to as data-to-solution mapping to \eqref{eq:mKdV} if it satisfies the following properties:
\begin{itemize}
\item[(i)] $S(u_0)$ satisfies the equation \eqref{eq:mKdV} in the distributional sense and $S(u_0)(0) = u_0$.
\item[(ii)] There exists a sequence of smooth global solutions $(u_n)$ such that $u_n \to S(u_0)$ in $C([-T,T],H^s)$ as $n \to \infty$.
\end{itemize}
The argument in \cite{Schippa2020} does not rely on complete integrability, but frequency-dependent time localization, and also covers the focusing case. It is conceivable that this approach extends with some modifications to the complex case.

\medskip

Like in \cite{KappelerMolnar2017}, the proof of the non-existence of solutions to \eqref{eq:mKdV} relies on the existence of solutions to the renormalized mKdV equation:
\begin{equation}
\label{eq:RenormalizedmKdV}
\left\{ \begin{array}{cl}
\partial_t u + \partial_x^3 u &= \pm \mathfrak{N}(u), \quad (t,x) \in \R \times \T, \\
u(0) &= u_0 \in H^s(\T)
\end{array} \right.
\end{equation}
with
\begin{equation*}
\mathfrak{N}(u) \widehat (n) = i n |\hat{u}(n)|^2 \hat{u}(n) + in \sum_{\substack{n= n_1 + n_2 + n_3, \\
(n_1+n_2)(n_1+n_3)(n_2+n_3) \neq 0}} \hat{u}(n_1) \hat{u}(n_2) \hat{u}(n_3).
\end{equation*}

\medskip

A consequence of the conjectured $L^8_{t,x}$-Strichartz estimate is the short-time estimate
\begin{equation*}
\| P_N e^{t \partial_x^3} u_0 \|_{L_t^6([0,N^{-\alpha}], L^6(\T))} \lesssim_\varepsilon N^{-\frac{\alpha}{18}+\varepsilon} \| u_0 \|_{L^2(\T)}.
\end{equation*}
This was the key ingredient to show the non-existence for solutions to \eqref{eq:mKdV} at negative Sobolev regularity in \cite{Schippa2020}. The specific choice for the frequency-dependent analysis in \cite{Schippa2020} was actually $\alpha = 1+$, which gives a smoothing of $N^{-\frac{1}{18}}$. Since by Proposition \ref{prop:ShorttimeStrichartzAiry} we have now the improved smoothing $N^{-\frac{1}{6}+}$, we obtain Theorem \ref{thm:Nonexistence}, which was formulated in \cite{Schippa2020} conditional upon \eqref{eq:L8StrichartzAiry}:

\begin{theorem}[Non-existence~of~solutions~to~mKdV]
\label{thm:Nonexistence}
There is $s'<0$ so that for $s'<s<0$ there exists $T=T(\| u_0 \|_{H^s})$ such that there exists a local solution $u \in C([-T,T],H^s(\T))$ to \eqref{eq:RenormalizedmKdV}, and we find the a priori estimate
\begin{equation*}
\sup_{t \in [0,T]} \| u(t) \|_{H^s} \leq C \| u_0 \|_{H^s}
\end{equation*}
to hold. Furthermore, solutions to \eqref{eq:mKdV} do not exist for $s'<s<0$.
\end{theorem}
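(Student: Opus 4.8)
The plan is to establish Theorem~\ref{thm:Nonexistence} by combining two ingredients: first, a local existence result for the renormalized equation \eqref{eq:RenormalizedmKdV} in $H^s(\T)$ for slightly negative $s$, obtained via frequency-dependent time localization using the improved smoothing estimate from Proposition~\ref{prop:ShorttimeStrichartzAiry}; and second, an argument showing that such solutions cannot solve the unrenormalized equation \eqref{eq:mKdV}, because the Fourier modes carry a nonlinear resonant phase correction that is absent from genuine solutions of \eqref{eq:mKdV}. The latter step follows the scheme of \cite{Schippa2020}, so the main new content is simply to re-run that argument with the unconditional estimate $N^{-1/6+}$ in place of the conditional $N^{-1/18}$, which only strengthens the available smoothing and hence does not require any structural change.

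First I would set up the local theory for \eqref{eq:RenormalizedmKdV}. Decompose $u = \sum_N P_N u$ and work on frequency-dependent time intervals of length $N^{-\alpha}$ with $\alpha$ chosen slightly larger than $1$ (as in \cite{Schippa2020}); on each such interval, a Duhamel/fixed-point argument in an appropriate $X^{s,b}$- or $U^p/V^p$-type space controls the trilinear nonlinearity $\mathfrak{N}(u)$. The nonresonant part of $\mathfrak{N}(u)$, where $(n_1+n_2)(n_1+n_3)(n_2+n_3) \neq 0$, benefits from the modulation gain coming from the Airy dispersion relation, and the multilinear estimates close once one feeds in the Strichartz bound $\|P_N e^{t\partial_x^3} f\|_{L^6_t([0,N^{-\alpha}],L^6_x)} \lesssim N^{-\kappa}\|f\|_{L^2}$ with $\kappa$ slightly below $\tfrac16$. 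Patching the frequency-localized pieces over the common time interval $[0,T]$ with $T = T(\|u_0\|_{H^s})$ and summing the (geometrically decaying in $N$, thanks to $s<0$ being compensated by $\kappa>0$) contributions yields the a~priori bound $\sup_{t\in[0,T]}\|u(t)\|_{H^s}\le C\|u_0\|_{H^s}$ and a solution $u\in C([-T,T],H^s(\T))$.

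Next I would prove the non-existence half. Suppose toward a contradiction that a data-to-solution map $S$ for \eqref{eq:mKdV} existed on $H^s(\T)$ for some $s\in(s',0)$, i.e.\ there is a sequence $(u_n)$ of smooth global solutions of \eqref{eq:mKdV} with $u_n\to S(u_0)$ in $C([-T,T],H^s)$. Each smooth solution $u_n$ of \eqref{eq:mKdV} is, after the gauge transform removing the mean and the resonant self-interaction, a solution of \eqref{eq:RenormalizedmKdV}; more precisely, if $u$ solves \eqref{eq:mKdV} then $v(t,x) = u(t, x - \mu t)$ with $\mu$ depending on the (conserved) $L^2$-mass solves an equation whose nonlinearity differs from $\mathfrak{N}$ only through a phase rotation $e^{\pm i t |\widehat{u}(n)|^2 \cdot(\ldots)}$ on each Fourier mode. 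For low-regularity data with mass not converging, these phase factors do not converge, and comparing the $n$-th Fourier coefficient of $u_n(t)$ with that of the renormalized solution constructed above shows that $u_n(t)$ cannot converge in $H^s$ to anything solving \eqref{eq:mKdV} in the distributional sense — the limit would have to simultaneously carry and not carry the phase correction. Quantifying this obstruction, one picks initial data of the form $u_0 = \sum_N c_N P_N \delta$-like bumps with $\|u_0\|_{H^s}$ finite for $s<0$ but $\|u_0\|_{L^2}=\infty$, for which the resonant phases $t|\widehat{u_n}(n)|^2$ blow up as $n\to\infty$ uniformly on every interval $[0,T]$, contradicting convergence in $C([-T,T],H^s)$.

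The main obstacle I expect is in the non-existence step rather than the existence step: one must make rigorous the claim that the resonant phase correction genuinely obstructs convergence, which requires a careful choice of approximating data and a quantitative lower bound on the discrepancy of Fourier modes that survives the passage to the $H^s$-limit. This is precisely where the improved smoothing exponent matters — a larger $\kappa$ widens the range $(s',0)$ on which the renormalized solution is controlled, and it is the interplay between the guaranteed existence for \eqref{eq:RenormalizedmKdV} and the forced non-convergence for \eqref{eq:mKdV} that pins down the admissible $s'$. Since the structure of this dichotomy is already carried out in \cite{Schippa2020} under the hypothesis \eqref{eq:L8StrichartzAiry}, and Proposition~\ref{prop:ShorttimeStrichartzAiry} now supplies the needed short-time estimate unconditionally (indeed with a better exponent, $N^{-1/6+}$ versus $N^{-1/18}$), the remaining work is to verify that no step of that argument used more than the short-time estimate; I expect this verification to be routine but is the only place where care is genuinely required.
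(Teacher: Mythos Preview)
Your proposal is correct and follows essentially the same approach as the paper: the paper's proof amounts to observing that the argument in \cite{Schippa2020} was conditional only on the short-time $L^6$ smoothing estimate for the Airy propagator, and that Proposition~\ref{prop:ShorttimeStrichartzAiry} now supplies this unconditionally (with the improved exponent $N^{-1/6+}$ in place of $N^{-1/18}$), so the existence for \eqref{eq:RenormalizedmKdV} and non-existence for \eqref{eq:mKdV} follow verbatim. Your more detailed sketch of the existence/non-existence dichotomy from \cite{Schippa2020} is consistent with this, and your closing remark that the only genuine work is checking that no step of \cite{Schippa2020} used more than the short-time estimate is exactly the content of the paper's proof.
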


\section*{Appendix: Short-time bilinear Strichartz estimates via the Fefferman--C\'ordoba square function estimate}
In the following we give another proof of the short-time bilinear Strichartz estimaet in one dimension. The argument uses the continuous approximation from previous sections and as key ingredient the bilinear Fefferman--C\'ordoba square function estimate (see \cite{Fefferman1973,Cordoba1982} or \cite[p.~40]{Demeter2020} for a recent treatise). The argument shows how what is known as small cap decoupling or square function (in $\ell^2$) imply Strichartz estimates on frequency dependent times.

We recall the following square function estimate:
\begin{theorem}
\label{thm:CordobaFefferman}
Let $g_1,g_2 \in \mathcal{S}(\R^2)$ with $\text{supp}(\hat{g}_i) \subseteq \mathcal{N}_{\delta}(\{ (\xi,|\xi|^2) : \xi \in I_i \})$, where $I_1, \, I_2 \subseteq [0,1]$ denote intervals with $\text{dist}(I_1,I_2) \sim 1$. Then the following estimate holds:
\begin{equation*}
\int_{\R^2} |g_1 g_2|^2 \lesssim \sum_{\theta_i \subseteq I_i: \\ \delta-\text{interval}} \int_{\R^2} |g_{1, \theta_1}|^2 |g_{2, \theta_2}|^2.
\end{equation*}
\end{theorem}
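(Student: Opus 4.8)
The plan is to run the classical Córdoba--Fefferman orthogonality argument, with the transversality needed for non-degeneracy supplied directly by the separation hypothesis $\text{dist}(I_1,I_2)\sim 1$. First I would decompose $g_i=\sum_{\theta_i}g_{i,\theta_i}$ over a partition of (a $\delta$-neighborhood of) $I_i$ into $\delta$-intervals $\theta_i$, where $g_{i,\theta_i}$ denotes the Fourier restriction of $g_i$ to the slab over $\theta_i$, so that $\widehat{g_{i,\theta_i}}$ is supported in $\mathcal{N}_\delta(\{(\xi,\xi^2):\xi\in\theta_i\})$; by density and truncation I may take the decomposition to be finite. Then $g_1g_2=\sum_{\theta_1,\theta_2}g_{1,\theta_1}g_{2,\theta_2}$, and $\widehat{g_{1,\theta_1}g_{2,\theta_2}}=\widehat{g_{1,\theta_1}}*\widehat{g_{2,\theta_2}}$ is supported in the sumset $S_{\theta_1,\theta_2}:=\text{supp}(\widehat{g_{1,\theta_1}})+\text{supp}(\widehat{g_{2,\theta_2}})$.

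The key step will be the geometric claim that the family $\{S_{\theta_1,\theta_2}\}_{\theta_1,\theta_2}$ has $O(1)$-bounded overlap. Suppose $\eta\in S_{\theta_1,\theta_2}\cap S_{\theta_1',\theta_2'}$; then $\eta=(\xi_1,\xi_1^2)+(\xi_2,\xi_2^2)+e=(\xi_1',\xi_1'^2)+(\xi_2',\xi_2'^2)+e'$ with $\xi_i\in\theta_i$, $\xi_i'\in\theta_i'$ and $|e|,|e'|\lesssim\delta$. Subtracting and setting $u=\xi_1-\xi_1'$, $v=\xi_2-\xi_2'$ gives $|u+v|\lesssim\delta$ and $|u(\xi_1+\xi_1')+v(\xi_2+\xi_2')|\lesssim\delta$; eliminating $v$ yields $|u|\,\big|(\xi_1-\xi_2)+(\xi_1'-\xi_2')\big|\lesssim\delta$. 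Since $\xi_1,\xi_1'\in I_1$ and $\xi_2,\xi_2'\in I_2$ with $\text{dist}(I_1,I_2)\sim1$, the differences $\xi_1-\xi_2$ and $\xi_1'-\xi_2'$ have the same sign and magnitude $\gtrsim1$, whence $|u|\lesssim\delta$ and then $|v|\lesssim\delta$. Thus, for a fixed $\eta$, all admissible pairs $(\xi_1,\xi_2)$ lie in a ball of radius $O(\delta)$, so $\eta$ belongs to $S_{\theta_1,\theta_2}$ for at most $O(1)$ pairs $(\theta_1,\theta_2)$.

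Once the overlap bound is in place, Plancherel closes the argument: using the pointwise inequality $|\sum_j F_j(\eta)|^2\le\big(\#\{j:\eta\in\text{supp}(\widehat{F_j})\}\big)\sum_j|F_j(\eta)|^2$ on the Fourier side,
$$\|g_1g_2\|_{L^2(\R^2)}^2=\Big\|\sum_{\theta_1,\theta_2}\widehat{g_{1,\theta_1}g_{2,\theta_2}}\Big\|_{L^2}^2\lesssim\sum_{\theta_1,\theta_2}\big\|\widehat{g_{1,\theta_1}g_{2,\theta_2}}\big\|_{L^2}^2=\sum_{\theta_1,\theta_2}\int_{\R^2}|g_{1,\theta_1}|^2|g_{2,\theta_2}|^2,$$
which is the asserted estimate. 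The only real obstacle is the geometric overlap count of the previous paragraph; the decomposition and the Plancherel step are routine. I would also note that this is precisely the transversal analogue of Córdoba's $L^4$ square function estimate for the parabola \cite{Cordoba1982}, and the same scheme goes through whenever the frequency caps involved are quantitatively transverse.
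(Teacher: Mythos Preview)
Your argument is correct and is precisely the classical C\'ordoba--Fefferman biorthogonality proof: decompose into $\delta$-caps, observe that the Fourier supports of the products $g_{1,\theta_1}g_{2,\theta_2}$ are the Minkowski sums of the corresponding parabolic slabs, verify $O(1)$ overlap of these sumsets via the transversality $\text{dist}(I_1,I_2)\sim 1$, and conclude by Plancherel. The overlap computation you give is the right one and is carried out cleanly.

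Note, however, that the paper does \emph{not} supply its own proof of this statement: Theorem~\ref{thm:CordobaFefferman} is merely recalled in the Appendix as a known input, with references to Fefferman~\cite{Fefferman1973}, C\'ordoba~\cite{Cordoba1982}, and Demeter~\cite[p.~40]{Demeter2020}. Your write-up reproduces the standard argument from those sources, so there is nothing to compare against in the paper itself; what you have written is exactly the proof one finds in the cited literature.
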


In the following we observe how this kind of estimate, which decomposes functions into blocks smaller than the canonical scale, translates to claims about exponential sums on frequency-dependent times. We recover the short-time bilinear Strichartz estimate due to Moyua--Vega \cite{MoyuaVega2008} and Hani \cite{Hani2012}:
\begin{theorem}
Let $f_i \in L^2(\T)$ with $\text{supp}(\hat{f}_i) \subseteq I_i \subseteq [0,N]$ and $\text{dist}(I_1,I_2) \sim N$. Then the following estimate holds:
\begin{equation*}
\int_{[0,N^{-1}] \times \T} |e^{it \Delta} f_1 e^{it \Delta} f_2|^2 dx dt \lesssim N^{-1} \prod_{i=1}^2 \| f_i \|_{L^2(\T)}^2.
\end{equation*}
\end{theorem}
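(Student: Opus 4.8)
The plan is to follow the recipe laid out in the preceding sections: normalize, rescale to unit frequencies, pass to the continuous (Fourier extension) model via periodicity, apply the Fefferman--C\'ordoba square function estimate (Theorem~\ref{thm:CordobaFefferman}) at the scale dictated by the weight, and then reverse the continuous approximation back to an exponential sum, where the square function decomposition becomes an orthogonality statement that is summed on the level of $\ell^2$-norms of the coefficients.

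First I would normalize $\| f_i \|_{L^2(\T)} = 1$ and write $\hat f_i(k) = a_{i,k}$. Carrying out the changes of variables $x \mapsto x/N$, $t \mapsto N^{-2}t$, $k_i \mapsto N k_i$ (exactly as in Subsection~\ref{subsection:ContinuousApproximation1d}) reduces the left-hand side, up to the bookkeeping factor $N^{-(4-1)} = N^{-3}$ coming from the rescaling and the $N^{-1}$ time interval turning into $[0,N^{2-1}] = [0,N]$, to an integral over a space-time ball $B_{N}$ of the product $|\mathcal E \tilde f_1|^2 |\mathcal E \tilde f_2|^2$, where $\tilde f_i$ has $\xi$-support in a fixed interval $I_i \subseteq [0,1]$ with $\mathrm{dist}(I_1,I_2)\sim 1$. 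Introducing a Schwartz weight $w_{B_N}^4$ rapidly decaying off $B_N$ and compactly Fourier supported, one absorbs it into the functions to obtain $g_i = \mathcal E \tilde f_i \cdot w_{B_N}$ with space-time Fourier support in $\mathcal N_{N^{-1}}(\{(\xi,-\xi^2):\xi\in I_i\})$.

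Next I would apply Theorem~\ref{thm:CordobaFefferman} with $\delta = N^{-1}$, which bounds $\int |g_1 g_2|^2$ by $\sum_{\theta_i} \int |g_{1,\theta_1}|^2 |g_{2,\theta_2}|^2$ over $N^{-1}$-intervals $\theta_i \subseteq I_i$. Since we rescaled by $N$, these $N^{-1}$-intervals correspond to single lattice frequencies in the original problem: reversing the continuous approximation ($\lambda\to\infty$) on each block $\theta_i$ turns $\mathcal E \tilde f_{i,\theta_i}$ into the single-frequency exponential $a_{i,k_i} e^{i(k_i x - t k_i^2)}$, and the $\int |g_{1,\theta_1}|^2 |g_{2,\theta_2}|^2$ becomes (after restoring the factor $N^{4-1}$ from reversing the continuous approximation and returning to the periodic setting) $N^{3} \cdot N^{-1} \int_{[0,N^{-1}]\times\T}$ of that pure product. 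A single frequency gives $\int_{[0,N^{-1}]\times\T} |a_{1,k_1}|^2 |a_{2,k_2}|^2 \,dx\,dt \lesssim N^{-1} |a_{1,k_1}|^2 |a_{2,k_2}|^2$ (the $x$-integral over $\T$ and the $t$-integral over $[0,N^{-1}]$ are both trivial), and the two bookkeeping powers of $N$ cancel the $N^{-3}$ from the initial rescaling exactly as explained in Subsection~\ref{subsection:Overview}. Summing over $k_1$ and $k_2$ independently using $\sum_{k_i}|a_{i,k_i}|^2 = \|f_i\|_2^2 = 1$ and Plancherel produces the claimed $N^{-1}\prod_i \|f_i\|_{L^2}^2$.

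The main obstacle, as in the rest of the paper, is the careful handling of the weights $w_{B_N}$: the square function estimate applies to honest Schwartz functions whose Fourier support lies in the $N^{-1}$-neighbourhood of the parabola, so one must verify that multiplication by $w_{B_N}$ (equivalently, blurring the Fourier support at scale $N^{-1}$) is compatible with the decomposition into $\theta_i$-blocks and that the weights can be summed/dominated pointwise when reverting to the periodic picture — this is the step where one dominates $w_{B_N}(x',t') w_{B_N}(x'+\cdots) \lesssim w_{B_N}(x',t')$ and uses $\|\phi_I\|_{L^1}\lesssim 1$, exactly as in the proof of Lemma~\ref{lem:BilinearExitLemma}. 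Once that technical point is granted, the rest is the standard dictionary between small-cap/square-function estimates on $\R^2$ and exponential-sum estimates on frequency-dependent time intervals, and no new idea is needed.
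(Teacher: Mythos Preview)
Your proposal is correct and follows essentially the same approach as the paper's own proof: rescale to unit frequencies and a ball of radius $N$, pass to the continuous extension model with weight $w_{B_N}$, apply the Fefferman--C\'ordoba square function estimate at scale $\delta=N^{-1}$ so that each $\theta_i$-block corresponds to a single lattice frequency, then reverse the approximation and estimate each single-frequency block trivially over $[0,N^{-1}]\times\T$. Your bookkeeping of the powers of $N$ is slightly awkwardly phrased but ultimately correct, and the technical point about the weights that you flag is exactly the one the paper glosses over in the same way.
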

\begin{proof}
The rescaling and continuous approximation like above allows us to write:
\begin{equation*}
\begin{split}
&\quad \int_{[0,N^{-1}] \times \T} |e^{it \Delta} f_1 e^{it \Delta} f_2|^2 dx dt \\
 &\lesssim N^3 \int_{[0,N] \times N \T} \big| \sum_{\substack{k_1 \in \Z/N,\\ k_1 \in \tilde{I}_1}} e^{i(k_1 x - t k_1^2)} b_{1,k_1} \sum_{\substack{k_2 \in \Z/N_1, \\ k_2 \in \tilde{I}_2}} e^{i(k_2 x - t k_2^2)} b_{2,k_2} \big|^2 dx dt \\
&\lesssim N^3 \int_{\R^2} | \mathcal{E} \tilde{f}_1 \mathcal{E} \tilde{f}_2 |^2 w_{B_N}^4 dx dt.
\end{split}
\end{equation*}
We check that $\mathcal{E} \tilde{f}_i \cdot w_{B_N} = g_i$ satisfies the assumptions of Theorem \ref{thm:CordobaFefferman} with $\delta = N^{-1}$ to find that
\begin{equation*}
\int_{\R^2} |g_1 g_2|^2 \lesssim \sum_{\substack{\theta_i \subseteq \tilde{I}_i: \\ \delta - \text{interval}}} \int_{\R^2} |g_{1, \theta_1} g_{2, \theta_2} |^2.
\end{equation*}
For this expression it is possible to reverse the continuous approximation to find that the exponential sum has been trivialized:
\begin{equation*}
\begin{split}
N^3 \sum_{\substack{ \theta_i \subseteq \tilde{I}_i : \\ \delta - \text{interval}}} \int_{\R^2} |g_{1, \theta_1} g_{2, \theta_2}|^2 &\lesssim \sum_{k_i \in I_i} \int_{[0,N^{-1}] \times \T} | e^{i(k_1 x - t k_1^2)} \hat{f}_1(k_1) |^2 |e^{i (k_2 x - t k_2^2)} \hat{f}_2(k_2) |^2 dx dt \\
&\lesssim N^{-1} \prod_{i=1}^2 \sum_{k_i} |\hat{f}_i(k_i)|^2 = N^{-1} \| f_1 \|_{L^2(\T)}^2 \| f_2 \|_{L^2(\T)}^2.
\end{split}
\end{equation*}
The proof is complete.
\end{proof}

\section*{Acknowledgement}

I would like to thank Sebastian Herr, Javier Ramos and Po-Lam Yung for helpful discussions on this set of problems. Financial support by Korea Institute for Advanced Study, grant No.
MG093901 is gratefully acknowledged.

\end{document}